\documentclass[12pt, a4paper]{amsart} 
\usepackage[margin=0.7in]{geometry}
\usepackage[utf8]{inputenc}
\usepackage{verbatim}
\usepackage{lmodern}
\usepackage{textcomp}
\usepackage{color}
\usepackage{amssymb,amscd,amsthm,amsxtra,mathtools}
\usepackage{amsmath}
\usepackage{dsfont,latexsym, color}
\usepackage{mathrsfs}
\usepackage{fancyhdr, enumitem}
\usepackage{bm}
\usepackage[alphabetic,initials]{amsrefs}
\usepackage{color}
\definecolor{citation}{rgb}{0.2,0.58,0.2} 
\definecolor{formula}{rgb}{0.1,0.2,0.6}
\definecolor{url}{rgb}{0.3,0,0.5} 
\usepackage[colorlinks=true,linkcolor=formula,urlcolor=url,citecolor=citation]{hyperref} 

\allowdisplaybreaks

\newtheorem{theorem}{Theorem}[section]
\newtheorem{prop}[theorem]{Proposition}
\newtheorem{corol}[theorem]{Corollary}

\theoremstyle{remark}
\newtheorem{rem}[theorem]{Remark}
 
\numberwithin{equation}{section}
\allowdisplaybreaks[4]

\def\XXint#1#2#3{{\setbox0=\hbox{$#1{#2#3}{\int}$}
		\vcenter{\hbox{$#2#3$}}\kern-.5\wd0}}

\newcommand{\R}{\mathbb{R}}
\newcommand{\N}{\mathbb{N}}

\renewcommand{\Delta}{\varDelta}
\renewcommand{\Sigma}{\varSigma}
\renewcommand{\Lambda}{\varLambda}
\renewcommand{\Psi}{\varPsi}
\renewcommand{\Omega}{\varOmega}
\renewcommand{\Gamma}{\varGamma}

\renewcommand{\epsilon}{\varepsilon}

\begin{document}

\title[Double well potentials from transition layers]{Reconstructing double-well potentials \\ from transition layers \\ in long-range phase coexistence models}
\author[F.~De~Pas]{Francesco De Pas} \address{Francesco De Pas\\  Department of Mathematics and Statistics\\
	University of Western Australia,\\
	35 Stirling Highway, WA 6009 Crawley, Australia}
	\email{\href{mailto:francesco.depas@uwa.edu.au}{francesco.depas@uwa.edu.au}}
\author[S.~Dipierro]{Serena Dipierro}  \address{Serena Dipierro\\  Department of Mathematics and Statistics\\
	University of Western Australia,\\
	35 Stirling Highway, WA 6009 Crawley, Australia}
	\email{\href{mailto:serena.dipierro@uwa.edu.au}{serena.dipierro@uwa.edu.au}}
\author[E. Valdinoci]{Enrico Valdinoci}  \address{Enrico Valdinoci\\Department of Mathematics and Statistics\\
	University of Western Australia,\\
	35 Stirling Highway, WA 6009 Crawley, Australia}
\email{\href{mailto:enrico.valdinoci@uwa.edu.au}{enrico.valdinoci@uwa.edu.au}}

\subjclass[2010]{47G10, 47B34, 35R11, 35B08}
	
\keywords{Nonlocal energies, one-dimensional solutions, fractional Laplacian, fractional Allen-Cahn equation}

\thanks{{\it Aknowledgements.} 
SD and EV are members of the Australian Mathematical Society.
FDP and EV are supported by the Australian Laureate Fellowship FL190100081 ``Minimal
surfaces, free boundaries and partial differential equations''.
SD is supported by the Australian Future Fellowship
FT230100333 ``New perspectives on nonlocal equations''.
}

\begin{abstract}
In models of phase coexistence, the precise form of the double-well potential is of central importance, yet it cannot be derived from first principles.  

In this paper, we investigate an inverse problem: starting from a prescribed transition layer with power-type decay at infinity, we reconstruct the structural properties of the associated double-well potential.  
We focus on the case of long-range interactions, where the dependence of the potential on the layer and its derivatives is particularly delicate.  

Our analysis establishes a correspondence between the decay rate of the transition layer and the regularity of the potential, revealing the existence of specific patterns and the possible emergence of degeneracies.

\end{abstract}

\maketitle

\setcounter{equation}{0}\setcounter{theorem}{0}

\setcounter{tocdepth}{1} 
\begin{center}
	\begin{minipage}{11cm}
\footnotesize
		\tableofcontents
	\end{minipage}
\end{center}

\section{Introduction}

In this work we study energy functionals modeling phase coexistence with long-range particle interactions, of the form
\begin{equation}\label{main_fun}
 \frac{1}{4} \iint_{\R^{2n}\setminus (\R^n \setminus \Omega)^2} \frac{|u(x)-u(y)|^2}{|x-y|^{n+2s}} \,dx\,dy + \int_{\R^n} W(u(x)) \, dx,
\end{equation}
where~$s\in (0,1)$ and~$\Omega \subset \R^n$ is an open set.  
The first term in~\eqref{main_fun} encodes nonlocal particle interactions, while~$W$ is a double-well potential.
The pure phases of the model are normalized to be~$-1$ and~$+1$, which correspond to the minima of the potential~$W$
(hence, up to a vertical translation, we can suppose that~$W(r)>0=W(-1)=W(+1)$ for all~$r\in(-1,1)$).
\medskip
  
Such functionals are a non-scaled, fractional analogue of the classical Ginzburg--Landau energy, in which the Dirichlet term is replaced by a fractional Sobolev seminorm. Traditionally, one is interested in the equilibrium configurations of the system, which correspond to the solutions of the Euler--Lagrange equation
\begin{equation}\label{EL_eq}
L_s u(x) = W'(u(x)),
\end{equation}
where~$L_s$ denotes the fractional Laplacian
\begin{equation}\label{fralapc}
L_s u(x) := \text{\rm PV}_x \int_{\R^n} \frac{u(y)-u(x)}{|x-y|^{n+2s}}\, dy,
\end{equation}
with~${\rm PV}$ denoting the Cauchy principal value. 

The equation in~\eqref{EL_eq} is often referred to in the literature as the 
\textit{fractional Allen--Cahn equation} and constitutes a paradigmatic model 
for nonlocal phase transitions. 
Energies as in~\eqref{main_fun} naturally arise when studying phase transition
processes with long-range tension effects (see e.g.~\cite{PSV13, CS14, MR3280032, CozziValdNONLINEARITY, MR4581189}) 
and in the Peierls--Nabarro model for crystal dislocations 
(see e.g.~\cite{MR371203, MR1442163, MR2461827, MR2946964, GM12, DFV14, DPV15, MR3338445, BV16, MR3511786, MR3703556, MR4612096, MR4531940}). 

Among all the equilibria of~\eqref{main_fun}, one is usually interested in the local minima, due to their typically enhanced stability properties.
The nontrivial minimizers, often called \emph{transition layers}, interpolate between the two stable states of~$W$ and are known to display rich qualitative features: symmetry, monotonicity, rigidity phenomena, and connections with nonlocal minimal surfaces (see e.g.~\cite{MR2177165, MR2498561, MR2644786, SV12, MR3035063, MR3148114, SV14, BV16, MR3740395, MR3812860, MR3939768, MR4124116, MR4050103, MR4938046}).  
Because of these properties, the detailed study of such minimizers is a major theme in the analysis of (classical and) nonlocal~PDEs.

Traditionally, one considers a potential~$W$---often chosen from heuristic or phenomenological considerations---and studies the existence, regularity, and qualitative features of the corresponding transition layers.

In this paper we take the opposite viewpoint: starting from a given power-like transition profile, we \emph{reconstruct} a potential that admits it as a stationary solution.  
More precisely, we prescribe an increasing function~${\phi}:\R\to(-1,1)$ connecting the two stable states~$\pm 1$ and approaching them at infinity with a given decay rate.  
This choice reflects the qualitative behavior of typical transition layers and is in line with the barrier functions often used in the analysis of fractional Allen--Cahn equations.  
We then construct a potential~$V$ such that
\begin{equation}\label{inverse_eq}
L_s {\phi}(x) = V'({\phi}(x)),
\end{equation}
and study the regularity and structure of~$V$ in terms of the asymptotic behavior of~${\phi}$.
\medskip

Our main result characterizes~$V$ as a double-well potential and establishes a precise correspondence between the decay of~${\phi}$ at the wells and the behavior of~$V$ and its derivatives 
near~$\pm 1$. 

In particular, we show that~$V$ inherits a power-like structure 
at every derivative order. Despite the nonlocal nature of~$L_s$, with a 
suitable choice of the layer~${\phi}$ one can construct a potential 
that is smooth in~$(-1,1)$ and vanishes at the wells at any prescribed order.  
This is an important feature, because potentials with vanishing
derivatives of order higher than~$2$ correspond to ``degenerate'' wells, for which one
can expect a slower decay of the solutions towards the pure phases.

Also, we stress that the double-well nature of~$V$ is not evident from its definition as a function of a given profile~$\phi$ (see~\eqref{pmes}). While the decay of~$\phi$ ensures that~$V$ is smooth in~$(-1,1)$, its regularity at the wells~$\pm1$ may drop to only Lipschitz. For regular potentials, the double-well structure was already established in~\cite{CS14}. Our setting instead allows such limited regularity at the wells (at the price of prescribing a sufficiently regular transition layer).

From a mathematical standpoint, this type of results
relies on two notable facts. 
First, the fractional Laplacian of the transition layer enjoys good scaling properties on derivatives 
(as described in Proposition~\ref{proppo}), and this fact is nontrivial given the 
nonlocal nature of the operator\footnote{In spite of its own scale invariance,
the nonlocal character of the fractional Laplacian operator does not make it compatible, in general, with the notion of power-like functions. To see this, given~$a<b$ and~$\lambda\in\R$,
we observe that, on the one hand, if~$f(x)=x^\lambda$ for all~$x\in(a,b)$, then all the derivatives of~$f$ are power-like in~$(a,b)$. On the other hand, 
by~\cite[Theorem~1.5]{MR4297378},
for every~$m\in\N$, every~$F\in C^m(\R)$, and
every~$\epsilon>0$, one can find functions~$f_\epsilon$ and~$\eta_\epsilon$ such that~$\|f-f_\epsilon\|_{C^m((a,b))}\le\epsilon$,
$\|\eta_\epsilon\|_{L^\infty((a,b))}\le\epsilon$, and
$$(-\Delta)^s f_\epsilon=F(x)+\eta_\epsilon\qquad{\mbox{in }}(a,b).$$
For example, the fractional Laplacian of a function which ``looks like a power in $(a,b)$'',
may well ``look like an exponential in~$(a,b)$''.} and, to the best of our knowledge, has not been 
addressed in the existing literature. 
Second, when differentiating~$V'(u)$ with respect to~$x$, the terms that appear 
exhibit the same decay at infinity, thus allowing for a quantitative analysis 
of higher-order derivatives of~$V$ near the wells (see Proposition~\ref{lemm}).
\medskip

We mention here that the inverse construction that we implement in this paper 
(from the transition layer to the potential) 
is relevant both conceptually and in applications. 

From a theoretical viewpoint, our result can be seen as a continuous extension 
of the classical theory to a nonlocal framework: much as in the local 
Allen--Cahn equation, one can reconstruct the derivative of the potential from 
the second derivative of the layer and then iteratively recover higher-order 
derivatives. It is worth stressing that such a correspondence is not 
always guaranteed when passing from local to nonlocal models. Here, however, the 
long-range interactions encoded in~$L_s$ do not disrupt the mechanism.

Moreover, we highlight the possibility of 
``designing'' both degenerate and non-degenerate potentials so as to reproduce 
prescribed long-range profiles, in particular those with power-like behavior 
(see Remark~\ref{pollohay}). 
This point is significant if we consider that the potential in these models is 
usually chosen for technical convenience, as its precise analytic form can 
rarely be determined. Indeed, such models are typically not derived 
microscopically (e.g. from statistical or quantum mechanics of particles), but 
rather proposed phenomenologically, in the spirit of Landau's theory of phase 
transitions (see e.g.~\cite{MR4581189} for a modern account). 
Landau's theory postulates that near a critical point the free energy can be 
expanded as a power series in the order parameter, though the coefficients are 
not fixed by first principles. The specific form of the potential is thus 
determined by intricate microscopic interactions, chemical composition, and 
thermodynamic properties of the material, which explains why the bulk free 
energy function can vary significantly between different substances.

In this spirit, the inverse construction that we implement has useful practical consequences, as it allows, at least in principle, the reconstruction of the potential from observations of the material:
specifically, by detecting variations of the state parameter near the interface and its decay toward the pure phases.

Furthermore, from an applied viewpoint, starting from a known profile and building the corresponding potential allows for refined barrier estimates and, in some cases, for proving their optimality (see~\cite{DFV14, DPV15, DPDV, OURREC}).

These considerations also point to several questions beyond the
stationary setting considered in this paper. In particular, we are confident that the results obtained here can also shed some light on the associated parabolic equation. We plan to investigate this question in a forthcoming paper by combining two approaches. The first one, will be based on gradient flow techniques together with a {\L}ojasiewicz--Simon inequality (whose exponent needs to be related to the spectral properties of the stationary equation). The second approach, instead, will rely on the construction of suitable barriers that capture the long-time asymptotics. The literature motivating the first approach includes \cite{MR1609269, MR1829143}
(as well as~\cite{MR1680877} for the higher-order case), while the second is inspired by \cite{MR3511786, MR3703556}.

\medskip 

The rest of the paper is organized as follows. In Section~\ref{n0biob}, we introduce the main results of the paper. % Section~\ref{mainr} states the main result of the paper, namely Theorem~\ref{minth}.
Section~\ref{auxilres} collects some auxiliary lemmata and technical tools needed for our analysis. Then, in Section~\ref{minth_proof}, we provide the proof of Theorem~\ref{minth}. Finally, Sections~\ref{atanx} and~\ref{mlml} contain some comments on Theorem~\ref{minth}.

\section{Main results}\label{n0biob}

Before stating the main results of the paper, we introduce here below some notations.

Let~$s\in (0,1)$, $\alpha$, $\beta\in(0,2s]$, $\kappa\in (0,+\infty)$
and~$C_1$, $C_2 >0$. We consider a function~${{\phi}} \in C^{\infty}(\R)$ such that~${\phi}'>0$ and
\begin{equation}\label{valesem}
{{\phi}}(x):= \begin{cases}
-1+C_1|x|^{-\alpha} &\mbox{if } x<-\kappa, \\
1-C_2|x|^{-\beta} &\mbox{if } x>\kappa.
\end{cases}
\end{equation} 
This will be the prototype of layer solution considered in this article.

Our objective is to build a double-well potential~$V$ with wells at~$\pm1$. In particular, recalling the operator~$L_s$ in~\eqref{fralapc}, we define the functions
\begin{equation}\label{i3456789097654bvcxs}
\begin{split}
&g(t):= L_s{{\phi}}(t) \quad {\mbox{ for all }}t \in \R \\
{\mbox{and }}\quad &h(r):= g({{\phi}}^{-1}(r)) \quad {\mbox{for all }}r \in (-1,1)
\end{split}
\end{equation}
and the potential~$ V:[-1,1] \to \R$ as
\begin{equation}\label{pmes}
V(r) := \int_{-1}^{r}h(\rho) \, d\rho.
\end{equation}
In this way, the function~${{\phi}}$ satisfies 
\begin{equation}\label{l}
L_s{{\phi}}(x)=  V\rq{}({{\phi}}(x)) \quad\mbox{for any } x \in \R.
\end{equation}

Also, we mention that, given any function~$f$, its~$i^{\text{th}}$ derivative will be denoted either by~$f^{(i)}$ or, when convenient, using prime notation~$f', f'', \ldots$
\medskip

We are now in the position to state the main theorem of this work. It establishes that~$V$ is, in fact, double-well shaped. Also, it shows  how the decay properties of
the the power-type transition layer~${{\phi}}$ influence the behavior of the associated potential~$V$ near the wells~$\pm1$. Finally, it addresses the main regularity properties of~$V$.

%%%%We refer to the notations introduced in Section~\ref{n0biob}.

\begin{theorem}\label{minth}
It holds that
\begin{equation}\label{l4b967}
V \in C^{\infty}((-1,1))%\cap C^{2,1}([-1,1])
.\end{equation}
Furthermore, the potential~$V$ satisfies
\begin{equation}\label{48397tasvjkdfbgkewguo}
{\mbox{${V(\pm1) = 0}$ and~$V(r)>0$ for any~$r\in(-1,1)$.}}
\end{equation}

In addition,
\begin{equation}\label{ham}
\lim_{r \to -1^+} \frac{V(r)}{(1+r)^{\frac{2s}\alpha+1}}=\frac{\alpha C_1^{-\frac{2s}{\alpha}}}{(2s+\alpha)s} \qquad\mbox{\rm and}\qquad \lim_{r \to 1^-} \frac{V(r)}{(1-r)^{\frac{2s}\beta+1}} =\frac{\beta C_2^{-\frac{2s}\beta}}{(2s+\beta)s};
\end{equation}
and
\begin{equation}\label{hamm}
\lim_{r \to -1^+} \frac{V\rq{}(r)}{(1+r)^{\frac{2s}\alpha}}=\frac{C_1^{-\frac{2s}{\alpha}}}{s} \qquad\mbox{
\rm and}\qquad \lim_{r \to 1^-} \frac{V\rq{}(r)}{(1-r)^{\frac{2s}\beta}} =-\frac{C_2^{-\frac{2s}\beta}}{s}.
\end{equation}

Also, for any~$i \in \N$ we have
\begin{equation}\label{0ijuhed}
\lim_{r\to -1^+} \frac{V^{(i+1)}(r)}{(1+r)^{\frac{2s}\alpha- i}} \in (-\infty,+\infty) \qquad{\mbox{and}}\qquad \lim_{r\to 1^-} \frac{V^{(i+1)}(r)}{(1-r)^{\frac{2s}\beta- i}} \in (-\infty,+\infty).\end{equation}

In particular, if~$i\in \N\setminus\{0\}$ and~${2s}\geq{\alpha} i$, then
\begin{equation}\label{trep1}
\lim_{r \to -1^+} \frac{ {V}^{(i+1)}(r)}{(1+r)^{\frac{2s}\alpha-i}} =
\frac{1}{s} C_1^{-\frac{2s}\alpha} \prod_{j=0}^{i-1} \left( \frac{2s}\alpha -j\right).
\end{equation}

Similarly, if~$i\in \N\setminus\{0\}$ and~${2s}\geq{\beta} i$, then
\begin{equation}\label{trep2}
\lim_{r \to 1^-} \frac{ {V}^{(i+1)}(r)}{(1-r)^{\frac{2s}\beta-i}}  =
\frac{(-1)^{i+1}}{s} C_2^{-\frac{2s}\beta} \prod_{j=0}^{i-1} \left( \frac{2s}\beta -j\right).
\end{equation}

Moreover, the identity in~\eqref{trep1} holds for any~$i \in \N$ whenever~$\frac{2s}\alpha$ is not an integer. Similarly, the identity in~\eqref{trep2} holds for any~$i \in \N$ whenever~$\frac{2s}\beta$ is not an integer.

Finally,
\begin{equation}\label{REgg}
V \in C^{\lfloor \frac{2s}\alpha \rfloor,1}([-1,0]) \quad\mbox{and}\quad V \in C^{\lfloor \frac{2s}\beta \rfloor,1}([0,1]),
\end{equation} where we use the notation
$$\lfloor x\rfloor:=\max\{y\in\N\;{\mbox{ s.t. }}\; y\le x\}.$$
\end{theorem}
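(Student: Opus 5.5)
The plan is to read everything off the asymptotics of $g=L_s\phi$ and its derivatives at $\pm\infty$, and then transfer them through the change of variables $r=\phi(t)$.

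\emph{Step 1: asymptotics of $g$.} Since $\phi\in C^\infty$ with bounded derivatives, $g\in C^\infty(\R)$. Since $\phi'>0$, $\phi^{-1}$ is smooth on $(-1,1)$. Hence $h=g\circ\phi^{-1}$ and $V$ are smooth there, which is \eqref{l4b967}. Next I will show that, as $t\to-\infty$,
\[ g(t)=\frac{C_1^{0}}{s}\,|t|^{-2s}(1+o(1))\cdot(\text{const}), \qquad g^{(k)}(t)\sim c_k |t|^{-2s-k}. \]
To get this, I split $L_s\phi(t)$ into the far region $\{y>\kappa\}$ and the near region. On the far region $\phi\approx 1$ while $\phi(t)\approx -1$, which gives the leading term $2\int_{|t|}^\infty \rho^{-1-2s}\,d\rho=\frac{1}{s}|t|^{-2s}$ (more precisely $\frac{1}{s}|t|^{-2s}(1+o(1))$). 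The remaining contributions are $O(|t|^{-2s-\alpha}+|t|^{-\alpha-2s}\cdots)$; here $\alpha\le 2s$ is used to check that the local term $L_s(C_1|x|^{-\alpha})$ near $t$ is of order $|t|^{-\alpha-2s}$. For derivatives I use the scaling property of Proposition~\ref{proppo}. It should give $g^{(k)}(t)=\frac{d^k}{dt^k}\big(\tfrac1s|t|^{-2s}\big)(1+o(1))$, with errors decaying faster by a factor $|t|^{-\min(\alpha,\beta)}$ or better.

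\emph{Step 2: transfer to $V$.} Near $-1$ we have $1+r=C_1|t|^{-\alpha}$, so $|t|=(C_1/(1+r))^{1/\alpha}$. Then $V'(r)=h(r)=g(t)\sim \frac1s C_1^{-2s/\alpha}(1+r)^{2s/\alpha}$, which is \eqref{hamm}. Integrating from $-1$ gives \eqref{ham}, and in particular $V(\pm1)=0$; the identity $V(1)=0$ needs $\int_{-1}^1h=0$. For that I write $\int_{-1}^1 h(r)\,dr=\int_\R g\,\phi'\,dt=\int L_s\phi\,\phi'$. This vanishes by the standard antisymmetry argument, symmetrizing the double integral $\iint(\phi(y)-\phi(t))\phi'(t)|t-y|^{-1-2s}$ as a derivative of the translation-invariant energy, using the decay of $\phi'$.

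\emph{Step 3: positivity in \eqref{48397tasvjkdfbgkewguo}.} I would show that $g$ changes sign exactly once, or at least use $V(r)=\int_{-\infty}^{t}g\phi'$. The representation $V(\phi(t))=\frac12\iint\cdots$, as an energy difference, should give positivity. More concretely, I expect $V(\phi(t))$ to equal a positive interaction quantity between $(-\infty,t)$ and $(t,\infty)$, by the same integration by parts as in Step 2, which is positive since $\phi$ is increasing. This is the delicate point I would check carefully.

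\emph{Step 4: higher derivatives.} Differentiating $h(\phi(t))=g(t)$ repeatedly, Proposition~\ref{lemm} expresses $V^{(i+1)}(\phi(t))$ as a combination of $g^{(k)}(t)$ and $\phi^{(m)}(t)$ divided by powers of $\phi'$. All terms scale like $|t|^{-2s+\alpha i}=(1+r)^{2s/\alpha-i}$, so the ratio in \eqref{0ijuhed} has a finite limit. Its value is obtained from the model computation: for exact powers, $h(r)=\frac1sC_1^{-2s/\alpha}(1+r)^{2s/\alpha}$ up to the error, giving the product in \eqref{trep1}. The error term is $(1+r)^{2s/\alpha+\gamma}$ with $\gamma>0$. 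Its $i$-th derivative is negligible relative to $(1+r)^{2s/\alpha-i}$ unless the leading coefficient vanishes, which happens exactly when $2s/\alpha\in\N$ and $i>2s/\alpha$. This yields \eqref{trep1} for $2s\ge\alpha i$ or for non-integer $2s/\alpha$.

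\emph{Step 5: regularity \eqref{REgg}.} For $i\le\lfloor 2s/\alpha\rfloor$, $V^{(i)}$ is bounded near $-1$, with exponent $2s/\alpha-i+1\ge1$. Hence $V^{(\lfloor2s/\alpha\rfloor)}$ has bounded derivative and is Lipschitz on $[-1,0]$.

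The main obstacle is Step 1 for derivatives, namely the sharp asymptotics of $g^{(k)}$, which rests on Proposition~\ref{proppo}. A second delicate point is the positivity of $V$ in Step 3.
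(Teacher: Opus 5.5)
The genuine gap is Step~3. The positivity of $V$ on $(-1,1)$, which is the core of \eqref{48397tasvjkdfbgkewguo}, is left as an expectation. Your first option fails in general. The profile $\phi$ is an arbitrary increasing function on $[-\kappa,\kappa]$, so take a two-step staircase: a rise, a long almost flat plateau, a second rise. Then $g=L_s\phi$ is positive before the first rise, negative right after it, positive right before the second rise and negative after it, so $g$ changes sign at least three times. Your last option is the right one, but the identity has to be produced. It is
\[
V(\phi(t))=(1+2s)\int_{-\infty}^{t}\!\int_{t}^{+\infty}\frac{\big(\phi(y)-\phi(t)\big)\big(\phi(t)-\phi(\tau)\big)}{(y-\tau)^{2+2s}}\,dy\,d\tau ,
\]
whose integrand is strictly positive because $\phi'>0$.

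To verify it, differentiate the right-hand side in $t$. The boundary terms vanish since one factor is zero there. The identity $(1+2s)\int_{-\infty}^{t}(y-\tau)^{-2-2s}\,d\tau=(y-t)^{-1-2s}$, and its analogue in $y$, turns the derivative into $\phi'(t)\,{\rm PV}\!\int_\R\frac{\phi(y)-\phi(t)}{|y-t|^{1+2s}}\,dy=\phi'(t)\,g(t)=\frac{d}{dt}V(\phi(t))$. Both sides tend to $0$ as $t\to-\infty$. For $s\ge\frac12$ the two halves of this derivative diverge separately. So run the computation with the kernel $(z^2+\epsilon^2)^{-\frac{1+2s}2}$, using minus its derivative in place of $(1+2s)(y-\tau)^{-2-2s}$; everything then converges absolutely. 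Then let $\epsilon\to0$: use monotone convergence on the right-hand side and dominated convergence on $\int_{-\infty}^t\phi'\,L_s^\epsilon\phi$, where $L_s^\epsilon$ is the operator with the regularized kernel. The same formula tends to $0$ as $t\to+\infty$, so it also gives $V(1)=0$. With this identity your route proves the double-well property by a direct computation. This replaces the paper's extension argument (Propositions~\ref{h08}--\ref{poas} and Corollary~\ref{finmi}, which adapt \cite{CS14} including its Hopf lemma). Without it, \eqref{48397tasvjkdfbgkewguo} is not proved.

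The remaining steps are sound and partly differ from the paper. Your symmetrization giving $\int_\R \phi'\,L_s\phi=0$ works once you write $L_s\phi$ through the symmetric second difference, so that Fubini applies also for $s\ge\frac12$. In Step~4 you need no decay rate. Since $g^{(k)}=L_s\phi^{(k)}$, Corollary~\ref{4tyyhj} says exactly that $g^{(k)}=G_0^{(k)}\,(1+o(1))$ near $-\infty$, with $G_0(t)=\frac1s|t|^{-2s}$. Moreover $\phi^{-1}$ is an exact power near $-1$, with $G_0\circ\phi^{-1}=\frac1sC_1^{-2s/\alpha}(1+r)^{2s/\alpha}$, and every Fa\`a di Bruno term of $h^{(i)}$ has the homogeneity $(1+r)^{\frac{2s}\alpha-i}$. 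Hence $h^{(i)}-\big(G_0\circ\phi^{-1}\big)^{(i)}=o\big((1+r)^{\frac{2s}\alpha-i}\big)$, which gives \eqref{trep1}, and likewise \eqref{trep2}, for every $i$. When $\frac{2s}\alpha\in\N$ and $i>\frac{2s}\alpha$ the limit is simply $0$, so your caveat about a vanishing leading coefficient is unnecessary. This is stronger than the paper's iterated L'H\^opital argument.
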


%\begin{rem}
%This holds for any~$k\in\N$ if we can assure~$\frac{2s}\alpha\notin \N$. Indeed, the restrictions come from the L\rq{}H\^{o}pital\rq{}s Rule requiring indeterminate form of the quotient. If, there exists~$\bar{k}$ such that~$\frac{2s}\alpha=\bar{k}$ then Theorem~\ref{minth} is valid till to the~$\bar{k}^{\rm th}$ derivative.
%\end{rem}

\begin{rem}
The properties of~$V$ in~\eqref{48397tasvjkdfbgkewguo}
confirm that~$V$ has indeed the shape of a double-well potential. Besides,
it is immediate from Theorem~\ref{minth} that~$V'$ vanishes at~$\pm 1$.
\end{rem}

\begin{rem}
The choice~$\alpha=\beta = 2s$ in~\eqref{trep1} and~\eqref{trep2} corresponds to a non-degenerate potential and leads to
\[ \lim_{r\to-1^+} V''(r)= \frac1{sC_1}\qquad{\mbox{and}}\qquad \lim_{r\to1^-} V''(r) = \frac1{sC_2}.\] 
This choice of the parameters~$\alpha$ and~$\beta$ is consistent with the construction in~\cite[Section~7]{DPV15}, as well as with the decay properties derived in~\cite{PSV13, CP16}. We point out that  transition layers decay polynomially in the presence of long-range interactions (see~\cite{PSV13, CP16})
and this is an important structural difference with respect to the local case that exhibits, when the potential is non-degenerate, an exponential decay at infinity. 

On the other hand, when~$\alpha<2s$ and~$\beta<2s$, we have that~$V''(\pm1) = 0$, meaning that the potential~$V$ is degenerate. This setting aligns with the framework considered, for instance, in~\cite{DPDV}. 

On a more general note, the limits in~\eqref{trep1} and~\eqref{trep2} ensure that, given any~$i\in \N$, it is always possible to obtain~$V \in C^{i,1}((-1,1))$ and~$V^{(i)}(\pm1)=0$ by taking~$\alpha$, $\beta\in(0,\tfrac{2s}i]$.
\end{rem}

\begin{rem}\label{pollohay}
As mentioned above, an important consequence of Theorem~\ref{minth} is that choosing a transition layer~$\phi$ of polynomial type produces a potential that preserves the characteristic features of power-type potentials. Indeed, as follows from~\eqref{trep1} and~\eqref{trep2}, each differentiation yields a precise scaling behavior, lowering the effective order by one near the wells. This property, however, relies on the specific choice of the profile~${\phi}$. If one uses layers that are not exactly of power form, but only asymptotically comparable to a power, such as ~$\arctan(x)$, then (perhaps quite surprisingly)
the situation significantly changes. 
In such cases, in fact, the resulting potential may fail to exhibit 
a power-like structure, as shown in Section~\ref{atanx}. 
\end{rem}

\begin{rem}\label{remmlml}
Suppose that $\tfrac{2s}{\beta}= i+m$ for some $i\in\N$ and $m\in[0,1)$.  
Then, according to Theorem~\ref{minth}, one has that~$V\in C^{\infty}((-1,1))$ and
\[
\lim_{r\to 1^-} \frac{V^{(i+1)}(r)}{(1-r)^{m}}
=
\frac{(-1)^{i+1}}{s}\, C_2^{-\frac{2s}{\beta}}
\prod_{j=0}^{i-1}\left(\frac{2s}{\beta}-j\right).
\]
However, we point out that these properties alone do not guarantee any H\"older continuity of~$V^{(i+1)}$, as discussed in Section~\ref{mlml}.
\end{rem}

\section{Auxiliary results}\label{auxilres}
\subsection{Asymptotic estimates on~$L_s$}\label{90fxd9916}
We collect here asymptotic results regarding the operator~$L_s$, as defined in~\eqref{fralapc}.

%%%%%%We recall the notations introduced in Section~\ref{n0biob}.

\begin{prop}\label{thone}
It holds that
\begin{equation}\label{yuuy}
\lim_{x \to -\infty} |x|^{2s} L_s {{\phi}}(x) = \frac{1}{s} \qquad\mbox{and}\qquad\lim_{x \to +\infty} |x|^{2s} L_s {{\phi}}(x) = -\frac{1}{s}.
\end{equation}
\end{prop}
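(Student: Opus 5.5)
We prove the limit as $x\to+\infty$; the case $x\to-\infty$ follows by the symmetric argument, or by applying the result to $\tilde\phi(x):=-\phi(-x)$, which has the same structure with the roles of $(C_1,\alpha)$ and $(C_2,\beta)$ exchanged and satisfies $L_s\tilde\phi(x)=-L_s\phi(-x)$.

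The key heuristic is that, for $x$ large, $\phi(x)$ is close to $1$ while $\phi\approx -1$ on the whole half-line $(-\infty,0)$. This long-range jump of size $2$ produces
$$\int_{-\infty}^{0}\frac{-2}{(x-y)^{1+2s}}\,dy=-\frac{2}{2s}\,x^{-2s}=-\frac1s\,x^{-2s}.$$
So the plan is to show that every other contribution is $o(x^{-2s})$. Note that the constant $1/s$ carries no normalization factor, which is consistent with the definition of $L_s$ in \eqref{fralapc}.

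Fix $x>2\kappa$ large and split $\R$ into three regions: $A=(-\infty,-x/2)\cup(-x/2,x/2)$, $B=(x/2,3x/2)$ and $C=(3x/2,+\infty)$.

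\textbf{Region $C$.} Here $|\phi(y)-\phi(x)|\le C_2x^{-\beta}$ and $\int_C|x-y|^{-1-2s}\,dy\sim x^{-2s}$. The contribution is therefore $O(x^{-2s-\beta})=o(x^{-2s})$.

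\textbf{Region $B$.} On $B$, $\phi$ is given by the explicit formula $1-C_2y^{-\beta}$. I would use the standard second-order principal value estimate: split $|y-x|<\delta x$ from $|y-x|\ge\delta x$, and use $|\phi''|\lesssim x^{-\beta-2}$ and $|\phi'|\lesssim x^{-\beta-1}$ on $B$. The symmetrized second difference then gives a contribution of order
$$x^{-\beta-2}\,x^{2-2s}+x^{-\beta}\,x^{-2s}=O(x^{-2s-\beta}),$$
which is again negligible.

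\textbf{Region $A$.} Write $\phi(y)-\phi(x)=(\phi(y)+1)-(1+\phi(x))-2+2$; more simply, $\phi(y)-\phi(x)=-2+(1-\phi(x))+(1+\phi(y))$.

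The constant $-2$ integrated against $(x-y)^{-1-2s}$ over $y<x/2$ gives $-\frac1s(x/2)^{-2s}$. This is not yet the desired constant, so the splitting must be refined. I would instead take $A=(-\infty,R)$ for a fixed large $R$ and $B=(R,\cdot)$. Cleaner still is to avoid the three-region split for the jump and write
$$L_s\phi(x)=\int_{\R}\frac{\phi(y)-\phi(x)}{|x-y|^{1+2s}}\,dy,$$
then subtract the comparison function $\psi=-\mathbf 1_{(-\infty,0)}+\mathbf 1_{(0,\infty)}$, whose value $L_s\psi(x)=-\frac1s x^{-2s}$ is computed exactly for $x>0$. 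It then suffices to show $x^{2s}L_s(\phi-\psi)(x)\to0$.

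The function $w=\phi-\psi$ is bounded, satisfies $|w(y)|\lesssim |y|^{-\min(\alpha,\beta)}$ for $|y|>\kappa$, and is smooth near $x$. To estimate $L_s w(x)$, split at $|y-x|<x/2$:

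\begin{itemize}
\item \emph{Far part, $|y-x|\ge x/2$.} This is bounded by
$$\int_{|y-x|>x/2}\frac{|w(y)|+|w(x)|}{|x-y|^{1+2s}}\,dy.$$
The term with $|w(x)|$ is $O(x^{-\beta-2s})$. For the term with $|w(y)|$, the region $|y|\le x/4$ gives $x^{-1-2s}\int_{|y|<x/4}|w|$. This is $o(x^{-2s})$, because $\int_{|y|<x/4}|w|=o(x)$: $w$ is bounded and tends to $0$ at $\pm\infty$. The region $|y|>x/4$ gives $O(x^{-2s-\min(\alpha,\beta)})$.
\item \emph{Near part, $|y-x|<x/2$.} This is the second-order estimate from region $B$ above, applied to $w=-C_2y^{-\beta}$, and gives $O(x^{-2s-\beta})$.
\end{itemize}

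The main obstacle is the $L^1$-type control $\int_{|y|<x/4}|w|=o(x)$. This step is where $w$ is not small near the origin, but boundedness together with the vanishing of $w$ at infinity suffices. The case $\alpha\ge1$, where $|w|$ is even integrable, is easier. Combining the estimates gives $x^{2s}L_s\phi(x)\to-\frac1s$ as $x\to+\infty$.
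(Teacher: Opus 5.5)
Your final argument is correct, but it is organized differently from the paper's.

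\textbf{The paper's route.} Working at $x\to-\infty$, the paper splits the integral according to where $\phi$ has its explicit form: the half-line $(-\infty,-\kappa)$ containing $x$, the core $(-\kappa,\kappa)$, and the opposite half-line $(\kappa,+\infty)$. It rescales $y=|x|\theta$ on both half-lines. The opposite half-line is computed essentially exactly, and this is where the constant $2\cdot\frac{1}{2s}$ comes from. The same-side piece is shown to be $o(|x|^{-2s})$ through the explicit function $f(\theta)=(1-|\theta|^{-\alpha})|1+\theta|^{-1-2s}$. There the principal value at $\theta=-1$ is handled by a first-order expansion, and the singularity at $\theta=0$ by a case analysis in the exponent $\alpha$.

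\textbf{Your route.} You subtract the sign function $\psi$, whose fractional Laplacian is exactly $-\frac1s x^{-2s}$ for $x>0$. You then prove a soft estimate for the bounded remainder $w=\phi-\psi$:
\begin{itemize}
\item near $x$, a second-order Taylor bound with $|w''|\lesssim x^{-\beta-2}$ on $(x/2,3x/2)$;
\item away from $x$, only boundedness, $w\to0$ at $\pm\infty$ (so that $\int_{|y|<x/4}|w|=o(x)$), and the decay of $w$.
\end{itemize}
Your reduction of $x\to-\infty$ to $x\to+\infty$ via $-\phi(-x)$ is also correct.

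\textbf{Comparison.} Your approach gains economy and generality. It needs no explicit integrals and no case distinctions in $\alpha$ or $\beta$. In fact it shows that $x^{2s}L_s\phi(x)\to-\frac1s$ for any bounded $C^2$ layer with $\phi(\pm\infty)=\pm1$ and $\sup_{(x/2,3x/2)}|\phi''|=o(x^{-2})$. This makes it transparent that the constant is universal and comes entirely from the jump between the wells. The paper's computation is fully explicit in exchange. Its rescaling-plus-kernel-expansion scheme is also the one reused for the higher derivatives in Proposition~\ref{proppo}.

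\textbf{Write-up.} Drop the first three-region attempt. As you noticed, lumping $(0,x/2)$, where $\phi\approx+1$, into the jump region gives the wrong constant $-\frac{2^{2s}}{s}$. The correction term $1+\phi(y)$ is also not small there. Present only the comparison-function argument, reusing your region-$B$ and region-$C$ estimates as the near and far parts for $w$.
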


\begin{proof}
We will establish the limit as~$x \to -\infty$ in~\eqref{yuuy}.
The proof for the limit as~$x\to +\infty$ is similar and therefore will be omitted.

Let~$f: (-\infty,0) \to \R$ be defined, for every~$\theta\in(-\infty,0)$, as
\begin{equation}\label{defeeffe00}
 f(\theta):= \frac{ 1-|\theta|^{-\alpha}}{|1+\theta|^{1+2s}}.\end{equation}
We observe that
\begin{equation}  \label{a}
\lim_{\theta \to -\infty} |\theta|^{1+2s}f(\theta) =1 %%\qquad\mbox{and}\qquad\lim_{\theta \to 0} |\theta|^{\alpha}f(\theta)= -1
.
\end{equation}

{N}ow, for any~$x<-2\kappa$ we set
\begin{equation}\label{tanrobb}
\begin{split}
&A(x):=- PV_x \int_{-\infty}^{-\kappa} \frac{{{\phi}}(x)-{{\phi}}(y)}{|x-y|^{1+2s}} \, dy,  \qquad B(x):=- \int_{-\kappa}^{\kappa} \frac{{{\phi}}(x)-{{\phi}}(y)}{|x-y|^{1+2s}} \, dy \\
\mbox{and} \qquad &D(x):=- \int_{\kappa}^{+\infty} \frac{{{\phi}}(x)-{{\phi}}(y)}{|x-y|^{1+2s}} \, dy.
\end{split}
\end{equation}
For the sake of simplifying the notation, we will omit the principal value
in~$A(x)$ in the following computations.

We observe that, for every~$\gamma>0$,
\begin{equation}\label{794-5684hdgjsgdbsggklpyoyi}\begin{split}
\int_{\frac{\kappa}{|x|}}^{\frac12}  \theta^{-\gamma}\big(1+O(\theta)\big)\,d\theta
&
=\begin{cases}\displaystyle \frac1{1-\gamma}\left(2^{\gamma-1} -\kappa^{1-\gamma}|x|^{\gamma-1}\right) \quad &{\mbox{if }}\gamma\neq1\\
\displaystyle\ln\left(\frac{|x|}{2\kappa}\right)\quad &{\mbox{if }}\gamma=1
\end{cases}\\&\qquad\qquad
+\begin{cases}
\displaystyle O(1)\quad &{\mbox{if }}\gamma\in(0,2)\\
\displaystyle O(\ln |x|)\quad &{\mbox{if }}\gamma=2\\
\displaystyle O(|x|^{\gamma-2})\quad &{\mbox{if }}\gamma\in(2,+\infty).
\end{cases}
\end{split}\end{equation}

Now we claim that
\begin{equation}\label{x}
\lim_{x \to -\infty} \lvert x \rvert^{2s} A(x) =0.
\end{equation}
For this, we change variable~$y:=|x|\theta$ to see that
\begin{equation}\label{5748932fghdjstryeuryue}
\begin{split}&
|x|^{2s}A(x)=|x|^{2s}
\int_{-\infty}^{-\kappa} \frac{{{\phi}}(y)-{{\phi}}(x)}{|x-y|^{1+2s}} \, dy =
C_1|x|^{2s}
\int_{-\infty}^{-\kappa} \frac{|y|^{-\alpha}-|x|^{-\alpha}}{|x-y|^{1+2s}} \, dy\\&\qquad
=C_1|x|^{-\alpha}
\int_{-\infty}^{-\frac\kappa{|x|}} \frac{|\theta|^{-\alpha}-1}{|1+\theta|^{1+2s}} \, d\theta
=-C_1|x|^{-\alpha}
\int_{-\infty}^{-\frac\kappa{|x|}} f(\theta) \, d\theta.\end{split}
\end{equation}
We point out that
\begin{equation*}\begin{split}&
\int_{-\frac32}^{-\frac12} f(\theta)\, d\theta 
=\int_{-\frac32}^{-\frac12}\frac{(1 -\alpha(1+\theta) +O(|1+\theta|^2)) -1}{\lvert 1 +\theta\rvert^{1+2s}}\, d\theta 
\\&\qquad =-
\int_{-\frac32}^{-\frac12}\frac{\alpha(1+\theta) }{\lvert 1 +\theta\rvert^{1+2s}}\, d\theta +
\int_{-\frac32}^{-\frac12}\frac{O(|1+\theta|^2) }{ \lvert 1 +\theta\rvert^{1+2s}}\, d\theta\\&\qquad= 
\int_{-\frac32}^{-\frac12}\frac{O(| 1+\theta |^2)}{\lvert 1 +\theta\rvert^{1+2s}}\, d\theta ,
\end{split}
\end{equation*} which is finite.

As a result, thanks to the limit in~\eqref{a}, we have that
\begin{equation*}
\int_{-\infty}^{-\frac12} f(\theta) \, d\theta \quad{\mbox{is a finite quantity.}}\end{equation*}
This entails that
\begin{equation}\label{bvcnxk3275327687998jhgf}
\lim_{x\to-\infty}|x|^{-\alpha}\int_{-\infty}^{-\frac12} f(\theta) \, d\theta=0.
\end{equation}

Furthermore, 
\begin{equation*}\begin{split}&
\int_{-\frac12}^{-\frac\kappa{|x|}} f(\theta) \, d\theta
=\int_{-\frac12}^{-\frac\kappa{|x|}} \frac{1-|\theta|^{-\alpha}}{|1+\theta|^{1+2s}} \, d\theta\\&\qquad = \frac1{2s}\left( 2^{2s}-\left(\frac{|x|}{|x|-\kappa}\right)^{2s}\right)
-\int_{-\frac12}^{-\frac\kappa{|x|}}  \frac{d\theta}{
|\theta|^{\alpha}|1+\theta|^{1+2s}} \\&\qquad=
\frac1{2s}\left( 2^{2s}-\left(\frac{|x|}{|x|-\kappa}\right)^{2s}\right)
-\int_{-\frac12}^{-\frac\kappa{|x|}}  \frac{1+O(|\theta|)}{|\theta|^{\alpha}}\,d\theta
\\&\qquad=
\frac1{2s}\left( 2^{2s}-\left(\frac{|x|}{|x|-\kappa}\right)^{2s}\right)
-\int_{\frac\kappa{|x|}}^{\frac12}  \frac{1+O(|\theta|)}{|\theta|^{\alpha}}\,d\theta.
\end{split}\end{equation*}
From this and~\eqref{794-5684hdgjsgdbsggklpyoyi} (used with~$\gamma:=\alpha$),
we conclude that
$$ \lim_{x\to-\infty}|x|^{-\alpha}
\int_{-\frac12}^{-\frac\kappa{|x|}} f(\theta) \, d\theta=0.
$$
Gathering this formula and~\eqref{bvcnxk3275327687998jhgf},
and using them into~\eqref{5748932fghdjstryeuryue}, we obtain~\eqref{x}.

Also, we observe that if~$x<-2\kappa$ and~$|y|\le\kappa$, then~$|x-y| \geq |x|/2$.
Hence,
\begin{equation*}
| B(x)| \leq 2^{1+2s} |x|^{-(1+2s)} \int_{-\kappa}^{\kappa} |{{\phi}}(y)-{{\phi}}(x)| \, dy\leq  2^{3+2s}\kappa \|{{\phi}}\|_{L^\infty(\R)} |x|^{-(1+2s)},
\end{equation*}
leading to
\begin{equation}\label{m}
 \lim_{x \to -\infty} \lvert x \rvert^{2s}  B(x) =0.
\end{equation}

We now claim that
\begin{equation}\label{olu}
\lim_{x\to -\infty} |x|^{2s}D(x)= \frac{1}{s}.
\end{equation}
To this end, we remark that, changing variable~$y:=|x|\theta$,
\begin{equation*}
\begin{split}
D(x) &= - \int_{\kappa}^{+\infty}\frac{C_1|x|^{-\alpha}-2+C_2|y|^{-\beta}}{|x-y|^{1+2s}} \, dy
=  -\int_{\frac{\kappa}{|x|}}^{+\infty}\frac{-2 +C_1|x|^{-\alpha} +C_2|x|^{-\beta} \theta^{-\beta} } {|x|^{2s}(1+\theta)^{1+2s}} \, d\theta\\
&= \lvert x \rvert^{-2s}\left( 2 - C_1\lvert x \rvert^{-\alpha} \right)
\left( \int_{\frac\kappa{|x|}}^{+\infty}\frac{d\theta}{(1+\theta)^{1+2s}}  \right) - C_2\lvert x \rvert^{-(\beta +2s)}\int_{\frac{\kappa}{|x|}}^{+\infty} \frac{\theta^{-\beta}}{(1+\theta)^{1+2s}} \, d\theta\\
&= |x|^{-2s}\left( 2 - C_1\lvert x \rvert^{-\alpha} \right) \frac{1}{2s} \left(1+\frac{\kappa}{|x|} \right)^{-2s}  - C_2 |x|^{-(\beta+2s)} \int_{\frac{\kappa}{|x|}}^{+\infty} \frac{\theta^{-\beta}}{(1+\theta)^{1+2s}} \, d\theta.
\end{split}
\end{equation*}
We observe that
\begin{eqnarray*}&&
\int_{\frac{\kappa}{|x|}}^{+\infty} \frac{\theta^{-\beta}}{(1+\theta)^{1+2s}} \, d\theta=
\int_{\frac{\kappa}{|x|}}^{1/2} \frac{\theta^{-\beta}}{(1+\theta)^{1+2s}} \, d\theta +
\int_{\frac12}^{+\infty} \frac{\theta^{-\beta}}{(1+\theta)^{1+2s}} \, d\theta\\
&&\qquad
=
\int_{\frac{\kappa}{|x|}}^{1/2} 
\theta^{-\beta}\big(1+O(\theta)\big)\, d\theta +
\int_{\frac12}^{+\infty} \frac{\theta^{-\beta}}{(1+\theta)^{1+2s}} \, d\theta.
\end{eqnarray*}
As a result,
\begin{equation*}\begin{split}
|x|^{2s}D(x)=\;&
\left( 2 - C_1\lvert x \rvert^{-\alpha} \right) \frac{1}{2s} \left(1+\frac{\kappa}{|x|} \right)^{-2s}\\&\;
-C_2  |x|^{-\beta}
\int_{\frac{\kappa}{|x|}}^{\frac12} 
\theta^{-\beta}\big(1+O(\theta)\big)\, d\theta -C_2  |x|^{-\beta}
\int_{\frac12}^{+\infty} \frac{\theta^{-\beta}}{(1+\theta)^{1+2s}} \, d\theta.
\end{split}
\end{equation*}
which, coupled with~\eqref{794-5684hdgjsgdbsggklpyoyi} (used here with~$\gamma:=\beta$),
gives~\eqref{olu}.

Now, since~$L_s{{\phi}}(x)= A(x) + B(x) + D(x)$,
the desired limit in~\eqref{yuuy}
follows from~\eqref{x}, \eqref{m} and~\eqref{olu}. \end{proof}

The next result concerns the asymptotic decay of~$L_s$ when applied to 
functions given by derivatives of polynomials.  
The guiding example is the case of~$L_s{\phi}^{(i)}$, which will be 
treated later in Corollary~\ref{4tyyhj}.  
However, due to the novelty of this result, we prefer to present it here 
in its full generality.

\begin{prop}\label{proppo}
Let~$\alpha$, $\beta$, $\kappa$, $C\in (0,+\infty)$ and~$ i \in \N\setminus\{0\}$. Let~$u \in C^{i+1,1}(\R)$ be such that~$u'>0$ and,
for every~$\ell=1,\ldots,i$,
\begin{equation}\label{labellliamo} 
u^{(\ell)}(x) \in \begin{cases}
(0,C |x|^{-\alpha-\ell}) &\mbox{for} \ x\leq -\kappa, \\
( - C |x|^{-\beta-\ell},0 ) &\mbox{for} \ x\geq \kappa \ \mbox{and }\ell \ \mbox{even},\\
(0, C |x|^{-\beta-\ell} ) &\mbox{for} \ x\geq \kappa \ \mbox{and } \ell \ \mbox{odd},
\end{cases}
\end{equation}
and
\begin{equation}\label{i4grf}\begin{split}
&|x|^3 \Vert u^{(i+2)}\Vert_{L^{\infty}\left( \frac{3x}2,\,\frac x 2\right)} =o(|x|^{1-i}) \quad\mbox{as}\quad x\to-\infty
\\{\mbox{and }}\qquad &
x^3 \Vert u^{(i+2)}\Vert_{L^{\infty}\left(\frac x 2,\, \frac{3x}2\right)} =o(x^{1-i}) \quad\mbox{as}\quad x\to+\infty.
\end{split}\end{equation}

Then,
\begin{equation}\label{leb}
\lim_{x \to \pm \infty} |x|^{i+2s} L_s u^{(i)}(x) =  (\mp 1)^{i-1} \,\frac{\Vert u'\Vert_{L^1(\R)}\Gamma(i+2s)}{\Gamma(1+2s) }.
\end{equation}
\end{prop}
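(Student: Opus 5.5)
The plan is to reduce $L_s u^{(i)}$ at infinity to a convolution of $u'$ against the $(i-1)$-th derivative of the kernel, by Taylor expanding near $x$ and integrating by parts away from $x$.

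I treat $x\to-\infty$; the case $x\to+\infty$ is symmetric, and only the sign bookkeeping changes, producing the factor $(-1)^{i-1}$. Set $v:=u^{(i)}$ and $K(z):=|z|^{-1-2s}$. For $x<-2\kappa$, I split
\[
L_s v(x)=\int_{|y-x|<\frac{|x|}2}\frac{v(y)-v(x)}{|x-y|^{1+2s}}\,dy+\int_{|y-x|\ge\frac{|x|}2}\frac{v(y)}{|x-y|^{1+2s}}\,dy-v(x)\int_{|y-x|\ge\frac{|x|}2}\frac{dy}{|x-y|^{1+2s}}=:I_1+I_2-I_3 ,
\]
with the principal value understood in $I_1$.

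\textbf{Local term $I_1$.} I symmetrize the integrand and use a second-order Taylor expansion. This gives
\[
|I_1|\le C\,\|u^{(i+2)}\|_{L^\infty(\frac{3x}2,\frac x2)}\,|x|^{2-2s}.
\]
By~\eqref{i4grf}, $\|u^{(i+2)}\|_{L^\infty(\frac{3x}2,\frac x2)}=o(|x|^{-2-i})$, so $I_1=o(|x|^{-i-2s})$.

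\textbf{Term $I_3$.} Here $I_3=c\,v(x)|x|^{-2s}$. By~\eqref{labellliamo}, $|v(x)|\le C|x|^{-\alpha-i}$, hence $|x|^{i+2s}I_3=O(|x|^{-\alpha})\to0$.

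\textbf{Main term $I_2$.} The region splits into $(-\infty,\tfrac{3x}2]$ and $[\tfrac x2,+\infty)$. On each piece I integrate by parts $i-1$ times, moving derivatives from $v=u^{(i)}$ onto the kernel. On $[\tfrac x2,+\infty)$ we have $y>x$, so
\[
\partial_y^{\,i-1}(y-x)^{-1-2s}=(-1)^{i-1}\frac{\Gamma(i+2s)}{\Gamma(1+2s)}(y-x)^{-i-2s}.
\]
This sign cancels the $(-1)^{i-1}$ coming from the integrations by parts. On the other piece, $x-y>0$, and the signs combine in the same way.

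\textbf{Boundary terms.} These sit at $y=\tfrac{3x}2$ and $y=\tfrac x2$, and have the form $u^{(j)}(y)\,\partial^{\,i-1-j}K$ with $1\le j\le i-1$. Using~\eqref{labellliamo}, each is $O(|x|^{-\alpha-j})\cdot O(|x|^{-i+j-2s})=O(|x|^{-\alpha-i-2s})$, which is negligible. At $\pm\infty$ the boundary terms vanish because $u^{(j)}$ is bounded and the kernel derivatives decay.

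What remains is
\[
\frac{\Gamma(i+2s)}{\Gamma(1+2s)}\int_{|y-x|\ge|x|/2}u'(y)\,|x-y|^{-i-2s}\,dy .
\]
\begin{itemize}
\item On $[\tfrac x2,+\infty)$: for each fixed $y$, $|x|^{i+2s}|x-y|^{-i-2s}\to1$, and this factor is bounded by $2^{i+2s}$ because $|x-y|\ge|x|/2$. Since $u'>0$ is integrable (it is bounded by $C|x|^{-\alpha-1}$ on both tails), dominated convergence gives the limit $\|u'\|_{L^1(\R)}$.
\item On $(-\infty,\tfrac{3x}2]$: the same factor is again at most $2^{i+2s}$. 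The region only sees the tail mass $\int_{-\infty}^{3x/2}u'\to0$, so this contribution is negligible.
\end{itemize}
Combining the three terms yields~\eqref{leb}, with sign $+1=(+1)^{i-1}$ at $-\infty$.

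I expect the main obstacle to be the integration by parts on the far region. One must check that every boundary term really is $o(|x|^{-i-2s})$, and get the signs right, particularly at $+\infty$, where $u^{(\ell)}$ alternates in sign. If the iterated boundary terms become unwieldy, my first fallback is to insert a smooth cutoff $\eta((y-x)/|x|)$ instead of the sharp splitting. All integrations by parts are then free of boundary terms, and the error from derivatives of the cutoff is handled by the same decay estimates from~\eqref{labellliamo}.
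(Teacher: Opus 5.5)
Your argument is correct, and it reaches the limit by a different route from the paper's. The paper works at $+\infty$ and then reflects via $v(x):=-u(-x)$. On the far region it substitutes $y=x\theta$ and Taylor-expands the kernel $(1-\theta)^{-1-2s}$ to order $i-1$, which leads to the moments $\int_{\R}u^{(i)}(y)\,y^m\,dy$. A separate inductive integration-by-parts identity then shows that the moments with $m\le i-2$ vanish and that the $(i-1)$-th equals $(-1)^{i-1}(i-1)!\,\|u'\|_{L^1(\R)}$. The constant therefore appears as $\binom{-1-2s}{i-1}(i-1)!$, with error terms tracked case by case according to whether $\alpha,\beta$ are $<1$, $=1$ or $>1$.

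You instead integrate by parts against the exact kernel. The Gamma ratio then comes directly from $\partial_y^{i-1}|x-y|^{-1-2s}$, and the boundary terms at the cut points are $O(|x|^{-\alpha-i-2s})$ at $-\infty$. Dominated convergence with the uniform bound $(|x|/|x-y|)^{i+2s}\le 2^{i+2s}$ finishes the argument. This avoids the moment identities and the logarithmic case distinctions entirely. The paper's expansion does give explicit far-field error rates, but these are swamped by the $o(|x|^{-i-2s})$ local contribution in both arguments, so no rate survives at the end. The local second-order Taylor estimate based on the bound on $u^{(i+2)}$ is the same in both proofs.

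One correction to your sign bookkeeping. On $(-\infty,\tfrac{3x}{2}]$ one has $\partial_y^{i-1}(x-y)^{-1-2s}=\frac{\Gamma(i+2s)}{\Gamma(1+2s)}(x-y)^{-i-2s}$. So after the $i-1$ integrations by parts that piece carries a factor $(-1)^{i-1}$; the signs do \emph{not} combine as they do on $[\tfrac{x}{2},+\infty)$. Your displayed remaining integral is therefore off by $(-1)^{i-1}$ on that piece. This is harmless at $-\infty$, where the piece vanishes in the limit. It is also exactly where the factor $(-1)^{i-1}$ at $+\infty$ comes from, since there this piece is the main one.

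A minor slip: on the right tail $u'$ is bounded by $C|x|^{-\beta-1}$, not $C|x|^{-\alpha-1}$.
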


\begin{proof}
We establish~\eqref{leb} first in the case~$x \to +\infty$.
For this, let~$x\geq 2\kappa$. 

We claim that\footnote{As usual, when~$i=1$ the summation in~\eqref{rrrrr} is intended to be void.}
\begin{equation}\label{rrrrr}\begin{split}
&x^{1+2s}\int_{-\infty}^{\frac x 2}  \frac{u^{(i)}(y) - u^{(i)}(x)}{|x-y|^{1+2s}}\, dy 
\\&\quad=\int_{-\frac{x}2}^{\frac{x}2} u^{(i)}(y)\,dy+
x^{-j}\sum_{j=1}^{i-1} \binom{-1-2s}{j} (-1)^j\int_{-\infty}^{+\infty} u^{(i)}(y) y^j \, dy \\
&\qquad\quad+O(x^{1-\beta-i})+O(x^{1-\alpha-i})+O(x^{-i})\\&\qquad\quad+\begin{cases}
\displaystyle O(x^{1-\alpha-i})\quad &{\mbox{if }}\alpha\in(0,1) \\
\displaystyle O(x^{-i}\ln x)\quad &{\mbox{if }}\alpha=1\\
\displaystyle O(x^{-i})\quad &{\mbox{if }}\alpha\in(1,+\infty) \end{cases}
\\&\qquad\quad +
\begin{cases}
\displaystyle O(x^{1-\beta-i})\quad &{\mbox{if }}\beta\in(0,1) \\
\displaystyle O(x^{1-\beta-i}\ln x)\quad &{\mbox{if }}\beta=1\\
\displaystyle O(x^{-i})\quad &{\mbox{if }}\beta\in(1,+\infty)  .
\end{cases}
\end{split}\end{equation}

To establish this, we make some preliminary observations.
For all~$j \in \N$ with~$j<i$, we have that~$\alpha+i-j>1$, and therefore, thanks to~\eqref{labellliamo},
\begin{equation}\label{vierotjpu8qe7320975098765}
\left|x^{-j}\int_{-\infty}^{-\frac x 2}u^{(i)}(y) y^j \, dy \right| \leq  Cx^{-j} 
\int_{-\infty}^{-\frac x 2}|y|^{-\alpha-i+j}\, dy = O( x^{1-\alpha-i}).
\end{equation}
Similarly,
\begin{equation}\label{vierotjpu8qe732097509876533}
\left|x^{-j}\int^{+\infty}_{\frac x 2}u^{(i)}(y) y^j \, dy \right| \leq  Cx^{-j} 
\int^{+\infty}_{\frac x 2}|y|^{-\beta-i+j}\, dy = O( x^{1-\beta-i}).
\end{equation}

Moreover, using again~\eqref{labellliamo},
\begin{equation}\label{vierotjpu8qe73209750987652}
\left| x^{-i}\int_{-\frac{x}2}^{- \kappa} u^{(i)}(y) | y|^i\, dy\right| \le C x^{-i}\int_{-\frac{x}2}^{- \kappa}  | y|^{-\alpha}\, dy
=\begin{cases}
\displaystyle O(x^{1-\alpha-i})\quad &{\mbox{if }}\alpha\in(0,1) \\
\displaystyle O(x^{-i}\ln x)\quad &{\mbox{if }}\alpha=1\\
\displaystyle O(x^{-i})\quad &{\mbox{if }}\alpha\in(1,+\infty)
\end{cases}
\end{equation} and analogously
\begin{equation}\label{vierotjpu8qe73209750987652BIS}
\left| x^{-i}\int^{\frac{x}2}_{\kappa} u^{(i)}(y)  y^i\, dy\right| \le C x^{-i}\int^{\frac{x}2}_{ \kappa}   y^{-\beta}\, dy
=\begin{cases}
\displaystyle O(x^{1-\beta-i})\quad &{\mbox{if }}\beta\in(0,1) \\
\displaystyle O(x^{-i}\ln x)\quad &{\mbox{if }}\beta=1\\
\displaystyle O(x^{-i})\quad &{\mbox{if }}\beta\in(1,+\infty).
\end{cases}
\end{equation}

In addition, in light of~\eqref{labellliamo} and changing variable~$y:=x\theta$,
\begin{equation*}
\begin{split}&
\left|\int_{-\infty}^{-\frac x 2} u^{(i)}(y) \left( 1-\frac{y}{x}\right)^{-1-2s} \, dy \right| \leq C  \int_{-\infty}^{-\frac x 2}|y|^{-\alpha-i}
\left( 1-\frac{y}{x}\right)^{-1-2s} \, dy \\&\qquad=
 C x^{1-\alpha-i} \int_{-\infty}^{-\frac12}|\theta|^{-\alpha-i} (1-\theta)^{-1-2s}\, d\theta = O(x^{1-\alpha-i}).
\end{split}
\end{equation*}
Using this and changing variable~$y:=x\theta$, we obtain that
\begin{equation}\label{0uj5hg987643980lkjhg}\begin{split}
&x^{1+2s} \int_{-\infty}^{\frac x 2}  \frac{u^{(i)}(y) - u^{(i)}(x)}{|x-y|^{1+2s}}\, dy
=\int_{-\infty}^{\frac x 2}  \big(u^{(i)}(y) - u^{(i)}(x)\big) \left( 1-\frac{y}{ x}\right)^{-1-2s}\, dy\\ &\qquad=
\int_{-\infty}^{\frac x 2}  u^{(i)}(y) \left( 1-\frac{y}{ x}\right)^{-1-2s}\, dy - u^{(i)}(x)\int_{-\infty}^{\frac x 2}  \left( 1-\frac{y}{ x}\right)^{-1-2s}\, dy\\
&\qquad=\int_{-\frac x 2}^{\frac x 2}  u^{(i)}(y) \left( 1-\frac{y }{x}\right)^{-1-2s}\, dy - u^{(i)}(x)\int_{-\infty}^{\frac x 2}  \left( 1-\frac{y}{ x}\right)^{-1-2s}\, dy +O(x^{1-\alpha-i})\\&\qquad
=x\int_{-\frac 1 2}^{\frac12}  u^{(i)}(x\theta) ( 1-\theta)^{-1-2s}\, d\theta - u^{(i)}(x)x\int_{-\infty}^{\frac 1 2} ( 1-\theta)^{-1-2s}\, d\theta+O(x^{1-\alpha-i})
\\&\qquad
=x\int_{-\frac 1 2}^{\frac12}  u^{(i)}(x\theta) ( 1-\theta)^{-1-2s}\, d\theta 
+O(x^{1-\alpha-i})
+O( x^{1-\beta-i})
.\end{split}\end{equation}

Now, a Taylor expansion of~$(1-\theta)^{-1-2s}$ around the origin to the~$(i-1)^{\rm th}$ order\footnote{We recall that we are using the notation for the generalized binomial coefficients, namely
$$\binom{-1-2s}{j}=\frac{(-1-2s)(-2-2s)\cdots(-2s-j)}{j!}.$$} gives that
\begin{equation*}
(1-\theta)^{-1-2s}=
1 + \sum_{j=1}^{i-1} \binom{-1-2s}{j} (-1)^j  \theta^j + O(|\theta|^{i}),\end{equation*}
from which we deduce that
\begin{eqnarray*}&&
x \int_{-\frac12}^{\frac12}u^{(i)}(x\theta)(1-\theta)^{-1-2s}\, d\theta\\&=&
x \int_{-\frac12}^{\frac12}  u^{(i)}(x\theta) \, d\theta
+  x  \sum_{j=1}^{i-1} \binom{-1-2s}{j} (-1)^j  \int_{-\frac12}^{\frac12}  u^{(i)}(x\theta)   \theta^j\, d\theta 
+ x \int_{-\frac12}^{\frac12} u^{(i)}(x\theta) O(| \theta^{i}|)\, d\theta 
\\&=&
\int_{-\frac{x}2}^{\frac{x}2 }  u^{(i)}(y) \, dy
+  x^{-j}\sum_{j=1}^{i-1} \binom{-1-2s}{j} (-1)^j  \int_{-\frac{x}2}^{\frac{x}2 }  u^{(i)}(y)  y^j\, dy
+ x^{-i}\int_{-\frac{x}2}^{\frac{x}2} u^{(i)}(y) O(| y^{i}|)\, dy 
.\end{eqnarray*}
Thus,
using~\eqref{vierotjpu8qe7320975098765}, \eqref{vierotjpu8qe732097509876533}, \eqref{vierotjpu8qe73209750987652} and~\eqref{vierotjpu8qe73209750987652BIS},
\begin{equation*}\begin{split}
&x \int_{-\frac12}^{\frac12}u^{(i)}(x\theta)(1-\theta)^{-1-2s}\, d\theta \\
=\;& \int_{-\frac{x}2}^{\frac{x}2 }  u^{(i)}(y) \, dy
+ x^{-j}\sum_{j=1}^{i-1} \binom{-1-2s}{j} (-1)^j \int_{-\infty}^{+\infty} u^{(i)}(y) y^j \, dy+
x^{-i}\int_{-\kappa}^{\kappa} u^{(i)}(y) O(| y^{i}|)\, dy
\\&\qquad+O(x^{1-\alpha-i})+ O(x^{1-\beta-i})
+\begin{cases}
\displaystyle O(x^{1-\alpha-i})\quad &{\mbox{if }}\alpha\in(0,1) \\
\displaystyle O(x^{-i}\ln x)\quad &{\mbox{if }}\alpha=1\\
\displaystyle O(x^{-i})\quad &{\mbox{if }}\alpha\in(1,+\infty)  
\end{cases}  \\
&\qquad+\begin{cases}
\displaystyle O(x^{1-\beta-i})\quad &{\mbox{if }}\beta\in(0,1) \\
\displaystyle O(x^{-i}\ln x)\quad &{\mbox{if }}\beta=1\\
\displaystyle O(x^{-i})\quad &{\mbox{if }}\beta\in(1,+\infty).
\end{cases}
\end{split}
\end{equation*}

We also notice that~$u^{(i)}$ is bounded in~$[-\kappa,\kappa]$, hence
$$ x^{-i} \int_{- \kappa }^{\kappa }u^{(i)}(y)O(|y|^i)\, d\theta=O(x^{-i}).
$$ Accordingly,
\begin{equation*}\begin{split}
&x \int_{-\frac12}^{\frac12}u^{(i)}(x\theta)(1-\theta)^{-1-2s}\, d\theta \\
=\;& \int_{-\frac{x}2}^{\frac{x}2 }  u^{(i)}(y) \, dy
+ x^{-j}\sum_{j=1}^{i-1} \binom{-1-2s}{j} (-1)^j \int_{-\infty}^{+\infty} u^{(i)}(y) y^j \, dy
\\&\qquad+O(x^{1-\alpha-i}) +O(x^{1-\beta-i})+O(x^{-i})
+\begin{cases}
\displaystyle O(x^{1-\alpha-i})\quad &{\mbox{if }}\alpha\in(0,1) \\
\displaystyle O(x^{-i}\ln x)\quad &{\mbox{if }}\alpha=1\\
\displaystyle O(x^{-i})\quad &{\mbox{if }}\alpha\in(1,+\infty)  
\end{cases}  \\
&\qquad+\begin{cases}
\displaystyle O(x^{1-\beta-i})\quad &{\mbox{if }}\beta\in(0,1) \\
\displaystyle O(x^{-i}\ln x)\quad &{\mbox{if }}\beta=1\\
\displaystyle O(x^{-i})\quad &{\mbox{if }}\beta\in(1,+\infty).
\end{cases}
\end{split}
\end{equation*}
Plugging this information into~\eqref{0uj5hg987643980lkjhg} we thereby obtain~\eqref{rrrrr}.

Now, we claim that
\begin{equation}\label{eqsd}
x^{1+2s} {\rm PV}_x \int_{\frac x 2}^{\frac{3x}2}  \frac{u^{(i)}(y) - u^{(i)}(x)}{|x-y|^{1+2s}}\, dy = o(x^{1-i}).
\end{equation}
To ease notation, in what follows we will omit the principal value.

We recall that~$u^{(i)}\in C^{1,1}(\R)$ and therefore
we are in the position of applying the Fundamental Theorem of Calculus twice, obtaining that, for any~${\theta \in (1/2,3/2)}$,
\begin{equation*}
\begin{split}
u^{(i)}(x\theta)- u^{(i)}(x)=  \;&\int_{x}^{x\theta} u^{(i+1)}(\tau)\,d\tau \\
=\;& u^{(i+1)}(x)x(\theta-1)+ \int_{x}^{x\theta} \left( u^{(i+1)}(\tau)- u^{(i+1)}(x)\right)\,d\tau\\
=\;&  u^{(i+1)}(x)x(\theta-1) +\int_{x}^{x\theta}\left( \int_x^\tau u^{(i+2)}(\eta)\, d\eta\right)\, d\tau.
\end{split}
\end{equation*}
Combining this with the change of variables~$y:=x\theta$, we obtain that
\begin{eqnarray*}
&&x^{1+2s}\int_{\frac x 2}^{\frac{3x}2}  \frac{u^{(i)}(y) - u^{(i)}(x)}{|x-y|^{1+2s}}\, dy = x
\int_{\frac 1 2}^{\frac{3 }2}  \frac{u^{(i)}(x\theta) - u^{(i)}(x)}{|1-\theta|^{1+2s}}\, d\theta\\
&&\qquad =x
\int_{\frac 1 2}^{\frac{3 }2} \left(u^{(i+1)}(x)x(\theta-1) +\int_{x}^{x\theta}\left( \int_x^\tau u^{(i+2)}(\eta)\, d\eta\right)\, d\tau\right)
\frac{d\theta}{|1-\theta|^{1+2s}}\\&&\qquad=
x\int_{\frac 1 2}^{\frac{3 }2} \left(\int_{x}^{x\theta}\left( \int_x^\tau u^{(i+2)}(\eta)\, d\eta\right)\, d\tau\right)
\frac{d\theta}{|1-\theta|^{1+2s}}.
\end{eqnarray*}
Hence, using~\eqref{i4grf},
\begin{eqnarray*}&&
x^{1+2s}\left|\int_{\frac x 2}^{\frac{3x}2}  \frac{u^{(i)}(y) - u^{(i)}(x)}{|x-y|^{1+2s}}\, dy\right| \leq
x^{3}\Vert u^{(i+2)}\Vert_{L^{\infty}\left(\frac x 2,\, \frac{3x}2\right)}\int_{\frac 1 2}^{\frac{3 }2} |1-\theta|^{1-2s} \, d\theta
= o(x^{1-i}),
\end{eqnarray*}
which proves~\eqref{eqsd}, as desired.

Also, by~\eqref{labellliamo} it holds that
\begin{equation}\label{0i26y}\begin{split}
&x^{1+2s} \left|\int_{\frac{3x}2}^{+\infty}  \frac{u^{(i)}(y) - u^{(i)}(x)}{|x-y|^{1+2s}}\, dy \right| = \left|\int_{\frac{3x}2}^{+\infty}  \big(u^{(i)}(y) - u^{(i)}(x)\big)\left(\frac y x-1\right)^{-1-2s} \, dy	\right| \\&\quad\leq
\int_{\frac{3x}2}^{+\infty}  \big(|u^{(i)}(y)|+| u^{(i)}(x)|\big)\left(\frac y x-1\right)^{-1-2s} \, dy	
\le
2 \widetilde{C} x^{-\beta-i} \int_{\frac{3x}2}^{+\infty} \left(\frac y x-1\right)^{-1-2s} \, dy\\&\quad=
2\widetilde{C} x^{1-\beta-i} \int_{\frac32}^{+\infty}(\theta-1)^{-1-2s}\, dy =
O(x^{1-\beta-i}) ,
\end{split}
\end{equation} for some~$\widetilde{C}>0$.

All in all, using the information coming from~\eqref{rrrrr}, \eqref{eqsd} and~\eqref{0i26y}, we deduce that, as~${x\to+\infty}$,
\begin{equation}\label{48hrf}
\begin{split}
&x^{1+2s}L_s u^{(i)}(x) =x^{1+2s}\int_{-\infty}^{+\infty}\frac{u^{(i)}(y)-u^{(i)}(x)
}{|x-y|^{1+2s}}\,dy
\\&\;=\int_{-\frac{x}2}^{\frac{x}2} u^{(i)}(y)\,dy+
x^{-j}\sum_{j=1}^{i-1} \binom{-1-2s}{j} (-1)^j\int_{-\infty}^{+\infty} u^{(i)}(y) y^j \, dy 
\\&\qquad\;+O(x^{1-\beta-i})+O(x^{1-\alpha-i})+O(x^{-i})+o(x^{1-i})
\\&\qquad\;+\begin{cases}
\displaystyle O(x^{1-\alpha-i})\quad &{\mbox{if }}\alpha\in(0,1) \\
\displaystyle O(x^{-i}\ln x)\quad &{\mbox{if }}\alpha=1\\
\displaystyle O(x^{-i})\quad &{\mbox{if }}\alpha\in(1,+\infty) \end{cases}
\\&\qquad\; +
\begin{cases}
\displaystyle O(x^{1-\beta-i})\quad &{\mbox{if }}\beta\in(0,1) \\
\displaystyle O(x^{1-\beta-i}\ln x)\quad &{\mbox{if }}\beta=1\\
\displaystyle O(x^{-i})\quad &{\mbox{if }}\beta\in(1,+\infty)  .
\end{cases}
\end{split}
\end{equation}

We observe that when~$i=1$ the expression in~\eqref{48hrf} boils down to
\begin{equation*}
\begin{split}
x^{1+2s}L_s u'(x) =\;&
\int_{-\frac{x}2}^{\frac{x}2} u'(y)\,dy+O(x^{-\beta})+O(x^{-\alpha})+O(x^{-1})+o(1)
\\&\qquad\;+\begin{cases}
\displaystyle O(x^{-\alpha})\quad &{\mbox{if }}\alpha\in(0,1) \\
\displaystyle O(x^{-1}\ln x)\quad &{\mbox{if }}\alpha=1\\
\displaystyle O(x^{-1})\quad &{\mbox{if }}\alpha\in(1,+\infty) \end{cases}
\\&\qquad\; +
\begin{cases}
\displaystyle O(x^{-\beta})\quad &{\mbox{if }}\beta\in(0,1) \\
\displaystyle O(x^{-\beta}\ln x)\quad &{\mbox{if }}\beta=1\\
\displaystyle O(x^{-1})\quad &{\mbox{if }}\beta\in(1,+\infty)  ,
\end{cases}
\end{split}
\end{equation*}
from which the limit in~\eqref{leb} plainly follows (recalling that~$u'>0$).

Hence, we now focus on the case~$i\ge2$.
For this, we notice that
\begin{equation*}
\int_{-\frac{x}2}^{\frac{x}2} u^{(i)}(y)\,dy
=u^{(i-1)}\left(\frac{x}2\right)-u^{(i-1)}\left(-\frac{x}2\right),
\end{equation*} and therefore, thanks to~\eqref{labellliamo},
the expression in~\eqref{48hrf} becomes
\begin{equation}\label{fgyueh3r83470tfyweghi3o987654}
\begin{split}
x^{1+2s}L_s u^{(i)}(x) =\;&x^{-j}\sum_{j=1}^{i-1} \binom{-1-2s}{j} (-1)^j\int_{-\infty}^{+\infty} u^{(i)}(y) y^j \, dy 
\\&\qquad\;+O(x^{1-\beta-i})+O(x^{1-\alpha-i})+O(x^{-i})+o(x^{1-i})
\\&\qquad\;+\begin{cases}
\displaystyle O(x^{1-\alpha-i})\quad &{\mbox{if }}\alpha\in(0,1) \\
\displaystyle O(x^{-i}\ln x)\quad &{\mbox{if }}\alpha=1\\
\displaystyle O(x^{-i})\quad &{\mbox{if }}\alpha\in(1,+\infty) \end{cases}
\\&\qquad\; +
\begin{cases}
\displaystyle O(x^{1-\beta-i})\quad &{\mbox{if }}\beta\in(0,1) \\
\displaystyle O(x^{1-\beta-i}\ln x)\quad &{\mbox{if }}\beta=1\\
\displaystyle O(x^{-i})\quad &{\mbox{if }}\beta\in(1,+\infty)  .
\end{cases}
\end{split}
\end{equation}

Also, we claim that, for every~$j\in\N\setminus\{0\}$ and~$m\in\N$ with~$m\le j-1$,
\begin{equation}\label{vbcnxkwqro384ty048}\begin{split}
&\frac1{m!}\int_{-\infty}^{+\infty} u^{(j)}(y)  y^{m}\, dy\\
=\;& \sum_{k=0}^{m-1}  \frac{(-1)^k}{(m-k)!}
\int_{-\infty}^{+\infty} \frac{d}{dy}\left( u^{(j-1-k)}(y)  y^{m-k} \right)\, dy+ (-1)^{m} \int_{-\infty}^{+\infty} u^{(j-m)}(y)\, dy.
\end{split}\end{equation}
We prove this by induction over~$j$. When~$j=1$, the result is obvious.
Thus, we suppose the claim to be true for some~$j-1\ge1$ and we want
to show it for~$j$. This goal is accomplished by using the inductive hypothesis,
according to the following calculation (with the index substitution~$\ell:=k+1$):
\begin{eqnarray*}&&
\frac1{m!}
\int_{-\infty}^{+\infty}  u^{(j)}(y)  y^{m}\,dy\\&&\qquad=
\frac1{m!}
\int_{-\infty}^{+\infty} \frac d{dy}\big( u^{(j-1)}(y)\, y^{m}\big)\,dy
- \frac1{(m-1)!}
\int_{-\infty}^{+\infty} u^{(j-1)}(y)\, y^{m-1}\,dy
\\&&\quad=\frac1{m!}
\int_{-\infty}^{+\infty} \frac d{dy}\big( u^{(j-1)}(y)\, y^{m}\big)\,dy
\\&&\quad\qquad-\left(
\sum_{k=0}^{m-2}  \frac{(-1)^k}{(m-1-k)!}
\int_{-\infty}^{+\infty} \frac d{dy}\big(u^{(j-2-k)}(y)\, y^{m-1-k}\big)\,dy
+(-1)^{m-1}\int_{-\infty}^{+\infty} u^{(j-m)}(y)\,dy
\right)\\&&\quad= \frac1{m!}
\int_{-\infty}^{+\infty} \frac d{dy}\big( u^{(j-1)}(y)\, y^{m}\big)\,dy
\\&&\quad\qquad+
\sum_{\ell=1}^{m-1}  \frac{(-1)^\ell}{(m-\ell)!}
\int_{-\infty}^{+\infty} \frac d{dy}\big(u^{(j-1-\ell)}(y)\, y^{m-\ell}\big)\,dy
+(-1)^{m}\int_{-\infty}^{+\infty} u^{(j-m)}(y)\,dy\\&&\quad=\sum_{\ell=0}^{m-1}  \frac{(-1)^\ell}{(m-\ell)!}
\int_{-\infty}^{+\infty} \frac d{dy}\big(u^{(j-1-\ell)}(y)\, y^{m-\ell}\big)\,dy
+(-1)^{m}\int_{-\infty}^{+\infty} u^{(j-m)}(y)\,dy.
\end{eqnarray*}
The proof of~\eqref{vbcnxkwqro384ty048} is thereby complete.

Now, using~\eqref{labellliamo}, we point out that, for any~$k\in \N$ such that~$k\leq i-2$,
\begin{equation*}
u^{(i-1-k)}(y)y^{i-1-k} = \begin{cases}
\displaystyle O(|y|^{-\alpha})\quad &{\mbox{as }}y\to-\infty,\\ 
\displaystyle O(|y|^{-\beta}) \quad &{\mbox{as }} y\to+\infty. \end{cases}
\end{equation*}
This and the Fundamental Theorem of Calculus imply that
\begin{equation*}
\int_{-\infty}^{+\infty} \frac d {dy} \left(u^{(i-1-k)}(y)y^{i-1-k}\right) \, dy =0.
\end{equation*}
This, together with~\eqref{vbcnxkwqro384ty048} (used here with~$j:=i$ and~$m:=i-1$), gives that
\begin{eqnarray*}&& 
\int_{-\infty}^{+\infty} u^{(i)}(y)\, y^{i-1}\,dy  
\\&&\quad=\sum_{k=0}^{i-2}  \frac{(-1)^k(i-1)!}{(i-1-k)!}
\int_{-\infty}^{+\infty} \frac{d}{dy}\left( u^{(i-1-k)}(y)  y^{i-1-k} \right)\, dy+ (-1)^{i-1}(i-1)! \int_{-\infty}^{+\infty} u'(y)\, dy
\\&&\quad
=(-1)^{i-1}(i-1)!\int_{-\infty}^{+\infty} u'(y)\,dy.\end{eqnarray*}
Thus, since~$u'>0$, we find that
\begin{equation}\label{plkk}
\int_{-\infty}^{+\infty} u^{(i)}(y)\, y^{i-1}\,dy  =(-1)^{i-1}(i-1)!\,\|u'\|_{L^1(\R)}.\end{equation}

Similarly, for any~$k$, $m\in \N$ such that~$k$, $m\leq i-2$,
\begin{equation*}
u^{(i-1-k)}(y)  y^{m-k} = \begin{cases}
\displaystyle O(|y|^{m+1-\alpha-i})\quad &{\mbox{as }}y\to-\infty,\\ 
\displaystyle O(|y|^{m+1-\beta-i}) \quad &{\mbox{as }} y\to+\infty, \end{cases}
\end{equation*}  which,
together with the Fundamental Theorem of Calculus, leads to
\begin{equation*}
\int_{-\infty}^{+\infty}\frac{d}{dy}\left( u^{(i-1-k)}(y)  y^{m-k} \right) \, dy =0.
\end{equation*}
Also, thanks to~\eqref{labellliamo},
$$ \int_{-\infty}^{+\infty} u^{(i-m)}(y)\, dy= \lim_{x\to+\infty}u^{(i-m-1)}(x)-\lim_{x\to-\infty}u^{(i-m-1)}(x)=0.$$
Thus, exploiting~\eqref{vbcnxkwqro384ty048} with~$i:=j$, we conclude that, for all~$m\in\N$ with~$m\le i-2$,
\begin{equation*}\begin{split}
&\int_{-\infty}^{+\infty} u^{(i)}(y)  y^{m}\, dy\\
=\;& \sum_{k=0}^{m-1}  \frac{(-1)^k{m!}}{(m-k)!}
\int_{-\infty}^{+\infty} \frac{d}{dy}\left( u^{(i-1-k)}(y)  y^{m-k} \right)\, dy+ (-1)^{m}{m!} \int_{-\infty}^{+\infty} u^{(i-m)}(y)\, dy\\
=\;&0.
\end{split}\end{equation*}

Hence, plugging this and~\eqref{plkk} into~\eqref{fgyueh3r83470tfyweghi3o987654}, we finally obtain that
\begin{equation*}
\begin{split}
&x^{1+2s}L_s u^{(i)}(x) 
\\&\;=
x^{1-i} \binom{-1-2s}{i-1} (-1)^{i-1}\int_{-\infty}^{+\infty} u^{(i)}(y) y^{i-1} \, dy 
+O(x^{1-\beta-i})+O(x^{1-\alpha-i})+O(x^{-i})+o(x^{1-i})
\\&\;\qquad+\begin{cases}
\displaystyle O(x^{1-\alpha-i})\quad &{\mbox{if }}\alpha\in(0,1) \\
\displaystyle O(x^{-i}\ln x)\quad &{\mbox{if }}\alpha=1\\
\displaystyle O(x^{-i})\quad &{\mbox{if }}\alpha\in(1,+\infty) \end{cases}
\\&\;\qquad +
\begin{cases}
\displaystyle O(x^{1-\beta-i})\quad &{\mbox{if }}\beta\in(0,1) \\
\displaystyle O(x^{1-\beta-i}\ln x)\quad &{\mbox{if }}\beta=1\\
\displaystyle O(x^{-i})\quad &{\mbox{if }}\beta\in(1,+\infty)  
\end{cases}\\
&\;= 
x^{1-i} \binom{-1-2s}{i-1} (i-1)!\,\|u'\|_{L^1(\R)}
+O(x^{1-\beta-i})+O(x^{1-\alpha-i})+O(x^{-i})+o(x^{1-i})
\\&\;\qquad+\begin{cases}
\displaystyle O(x^{1-\alpha-i})\quad &{\mbox{if }}\alpha\in(0,1) \\
\displaystyle O(x^{-i}\ln x)\quad &{\mbox{if }}\alpha=1\\
\displaystyle O(x^{-i})\quad &{\mbox{if }}\alpha\in(1,+\infty) \end{cases}
\\&\;\qquad +
\begin{cases}
\displaystyle O(x^{1-\beta-i})\quad &{\mbox{if }}\beta\in(0,1) \\
\displaystyle O(x^{1-\beta-i}\ln x)\quad &{\mbox{if }}\beta=1\\
\displaystyle O(x^{-i})\quad &{\mbox{if }}\beta\in(1,+\infty).
\end{cases}  
\end{split}
\end{equation*}
As a result,
\begin{equation}\label{few6409379gjjeilswh}
\lim_{x\to+\infty}x^{i+2s}L_s u^{(i)}(x)=\binom{-1-2s}{i-1} (i-1)!\,\|u'\|_{L^1(\R)}.
\end{equation}

We now observe that
\begin{eqnarray*}
&&\binom{-1-2s}{i-1} (i-1)!=(-1-2s)(-2-2s)\cdots(1-i-2s)
\\&&\qquad=(-1)^{i-1}(1+2s)(2+2s)\cdots(i-1+2s)=(-1)^{i-1}
\frac{\Gamma(i+2s)}{\Gamma(1+2s)}.
\end{eqnarray*}
From this and~\eqref{few6409379gjjeilswh} we obtain the desired limit in~\eqref{leb}
as~$x\to+\infty$ for all~$i\geq2$.

In order to deal with the limit as~$x\to-\infty$ in~\eqref{leb}, we define~$v(x):=-u(-x)$.
We notice that~$v^{(i)}(x)=(-1)^{i+1}u^{(i)}(-x)$ for all~$i\ge1$.
Therefore~$v'>0$ and~$v$ satisfies the assumptions in~\eqref{labellliamo} and~\eqref{i4grf} (with the roles of~$\alpha$ and~$\beta$ exchanged).

Moreover,
\begin{eqnarray*}
&&L_sv^{(i)}(x)=\int_{-\infty}^{+\infty}\frac{v^{(i)}(y)-v^{(i)}(x)}{|x-y|^{1+2s}}\,dy
=(-1)^{i+1}\int_{-\infty}^{+\infty}\frac{u^{(i)}(-y)-u^{(i)}(-x)}{|x-y|^{1+2s}}\,dy\\&&\qquad
=(-1)^{i+1}\int_{-\infty}^{+\infty}\frac{u^{(i)}(z)-u^{(i)}(-x)}{|x+z|^{1+2s}}\,dz
=(-1)^{i+1}L_su^{(i)}(-x).
\end{eqnarray*}
As a result,
\begin{eqnarray*}
&&\lim_{x\to-\infty}|x|^{i+2s}L_su^{(i)}(x)
=\lim_{x\to+\infty}|x|^{i+2s}L_su^{(i)}(-x)
=(-1)^{i+1}\lim_{x\to+\infty}|x|^{i+2s}L_sv^{(i)}(x)\\&&\qquad=
(-1)^{i+1}(-1)^{i-1} \,\frac{\Vert v'\Vert_{L^1(\R)}\Gamma(i+2s)}{\Gamma(1+2s) }
=\frac{\Vert u'\Vert_{L^1(\R)}\Gamma(i+2s)}{\Gamma(1+2s) },
\end{eqnarray*}
which completes the proof of~\eqref{leb}.
\end{proof}

A consequence of Propositions~\ref{thone} and~\ref{proppo} is the following result
dealing with the function~${\phi}$ defined in~\eqref{valesem}.

\begin{corol}\label{4tyyhj}
Let~$i \in \N $. Then,
\[
\lim_{x \to\pm  \infty} |x|^{i+2s} L_s {\phi}^{(i)}(x) = 
(\mp 1)^{i-1} \,\frac{2\Gamma(i+2s)}{\Gamma(1+2s) }.
\]
\end{corol}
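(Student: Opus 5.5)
The plan is to split according to whether $i=0$ or $i\ge1$. For $i\ge1$ we apply Proposition~\ref{proppo} with $u:=\phi$; the case $i=0$ is essentially Proposition~\ref{thone}, and we check that the constants agree.

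\emph{The case $i=0$.} The right-hand side equals $(\mp1)^{-1}\,2\Gamma(2s)/\Gamma(1+2s)=\mp\frac{2}{2s}=\mp\frac1s$, using $\Gamma(1+2s)=2s\,\Gamma(2s)$. This is exactly \eqref{yuuy}.

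\emph{The case $i\ge1$: hypothesis \eqref{labellliamo}.} Since $\phi\in C^\infty(\R)$, we have $\phi\in C^{i+1,1}(\R)$, and $\phi'>0$ by assumption. For $x<-\kappa$ write $\phi(x)=-1+C_1(-x)^{-\alpha}$. Then
\[
\phi^{(\ell)}(x)=C_1\,\alpha(\alpha+1)\cdots(\alpha+\ell-1)\,|x|^{-\alpha-\ell}>0 .
\]
For $x>\kappa$ we get
\[
\phi^{(\ell)}(x)=-C_2(-\beta)(-\beta-1)\cdots(-\beta-\ell+1)\,x^{-\beta-\ell}=(-1)^{\ell+1}C_2\,\beta(\beta+1)\cdots(\beta+\ell-1)\,x^{-\beta-\ell},
\]
which is positive for $\ell$ odd and negative for $\ell$ even. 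Taking $C$ larger than all the constants for $\ell\le i$ gives \eqref{labellliamo}. The bounds only need to hold for $|x|\ge\kappa$; at $x=\pm\kappa$ itself one may enlarge $\kappa$ slightly or use continuity, which is harmless.

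\emph{Hypothesis \eqref{i4grf}.} For $x\to-\infty$, the interval between $3x/2$ and $x/2$ lies in $(-\infty,-\kappa)$ once $|x|>2\kappa$. There $|\phi^{(i+2)}|\le C'|x/2|^{-\alpha-i-2}$. Hence
\[
|x|^3\,\|\phi^{(i+2)}\|_{L^\infty}=O(|x|^{1-\alpha-i})=o(|x|^{1-i}),
\]
and the same argument with $\beta$ handles $x\to+\infty$. Proposition~\ref{proppo} then applies and yields the limit with constant $\|\phi'\|_{L^1(\R)}\,\Gamma(i+2s)/\Gamma(1+2s)$.

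\emph{Identifying the constant.} Since $\phi'>0$, we have $\|\phi'\|_{L^1(\R)}=\lim_{+\infty}\phi-\lim_{-\infty}\phi=1-(-1)=2$, which gives the claimed formula.

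There is no real obstacle in this argument. The only points needing care are the sign pattern of the derivatives for $x>\kappa$, which is alternating and matches \eqref{labellliamo}, and the interpretation of the case $i=0$ through Proposition~\ref{thone}, since Proposition~\ref{proppo} requires $i\ge1$.
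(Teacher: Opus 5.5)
Your proof is correct and follows the paper's argument. The case $i=0$ is Proposition~\ref{thone}, and your check that $(\mp1)^{-1}\,2\Gamma(2s)/\Gamma(1+2s)=\mp\frac1s$ makes explicit what the paper leaves implicit; for $i\ge1$ you verify \eqref{labellliamo} and \eqref{i4grf} from the explicit derivatives of $\phi$ outside $[-\kappa,\kappa]$ and conclude via Proposition~\ref{proppo} with $\|\phi'\|_{L^1(\R)}=2$, as the paper does.
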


\begin{proof}
 The case~$i=0$ follows from Proposition~\ref{thone}.

If instead~$i\ge1$, one observes that~$\phi$ satisfies
the regularity assumptions in Proposition~\ref{proppo}. Moreover, $\phi'>0$
and, for every~$\ell=1,\ldots,i$,
\begin{eqnarray*}
\phi^{(\ell)}(x)=
\begin{cases}
C_1\alpha(\alpha+1)\cdots(\alpha +\ell-1)|x|^{-\alpha-\ell} &\mbox{for} \ x\leq -\kappa, \\
C_2(-1)^{\ell+1} \beta(\beta+1)\cdots (\beta+\ell-1)x^{-\beta-\ell} &\mbox{for} \ x\geq \kappa,
\end{cases}
\end{eqnarray*}
which implies that the assumption in~\eqref{labellliamo} is fulfilled. 

Also, if~$x>2\kappa$,
\begin{eqnarray*}
&&x^3\Vert \phi^{(i+2)}\Vert_{L^{\infty}\left( \frac x 2, \frac{3x}2\right)}
=Cx^3 x^{-\beta-i-2} =Cx^{1-\beta-i},
\end{eqnarray*} up to renaming~$C$ at every step.
Therefore,
$$
 \lim_{x\to+\infty} x^{i-1}
x^3\Vert \phi^{(i+2)}\Vert_{L^{\infty}\left(\frac{x}2,\frac{3x}2\right)}
=C\lim_{x\to+\infty} x^{i-1} x^{1-\beta-i} 
=C\lim_{x\to+\infty} x^{-\beta}=0.
$$

A similar argument can be made to check the limit as~$x\to-\infty$, thus showing that
the assumption in~\eqref{i4grf} is also satisfied.

As a result,  recalling that by~\eqref{valesem} we have
\begin{equation*}
\Vert \phi'\Vert_{L^1(\R)} =\int_\R \phi'(x) \, dx= \phi(+\infty)-\phi(-\infty)=2,
\end{equation*} 
when~$i\ge1$ the limits are a consequence of Proposition~\ref{proppo}.
\end{proof}

\begin{subsection}{On the double-well nature of~$V$}\label{xonc}

In this section we gather the ingredients needed to establish~\eqref{48397tasvjkdfbgkewguo}, namely the fact that~$V$ is a double-well potential. While~$V(-1)=0$ by construction (recall~\eqref{pmes}), the value of~$V$ at~$+1$, as well as its behavior in~$(-1,1)$, are not immediate. Besides being of independent interest, the validity of~\eqref{48397tasvjkdfbgkewguo} is also required to prove~\eqref{ham} in Theorem~\ref{minth}.

Now, we point out that in~\cite{CS14} the authors consider layer solutions to
\begin{equation*}
L_s v = f(v) \quad\mbox{in } \R,
\end{equation*}
namely, solutions~$v$ satisfying
\[
v\in(-1,1), \quad v'>0 \quad\mbox{and}\quad \lim_{x\to\pm\infty} v(x)=\pm1.
\]

Relying on the extension formula of the fractional Laplacian, they prove the following result.

\begin{prop}[Theorem~2.2 in~\cite{CS14}]
Let~$s\in(0,1)$ and let~$f\in C^{1,\alpha}(\R)$ for some~$\alpha>\max\{0,1-2s\}$. Let~$G$ be a potential satisfying~$G'=f$.  

If there exists a layer solution~$v$ to~$L_s v=f(v)$, then
\[ G>G(1)=G(-1) \quad\mbox{in}\quad (-1,1). \]
\end{prop}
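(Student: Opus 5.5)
The plan is to follow the Hamiltonian approach of Cabr\'e and Sol\`a-Morales, adapted to the fractional setting through the Caffarelli--Silvestre extension. Since~$L_s=-c_s(-\Delta)^s$ for some~$c_s>0$, I will write the equation as~$(-\Delta)^s v=-c_s^{-1}f(v)$. I will then consider the~$s$-harmonic extension~$\mathcal V(x,y)$ of~$v$ to~$\R^2_+$, which solves~${\rm div}(y^{1-2s}\nabla\mathcal V)=0$ in~$\R^2_+$ with~$\mathcal V(x,0)=v(x)$. The nonlinear Neumann condition will read~$-d_s\lim_{y\to0^+}y^{1-2s}\partial_y\mathcal V=-c_s^{-1}f(v)$, for a suitable constant~$d_s>0$.

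\emph{Step 1: regularity and asymptotics.} Using~$f\in C^{1,\alpha}$ with~$\alpha>\max\{0,1-2s\}$, I expect standard regularity for the degenerate elliptic problem to give uniform bounds on~$\nabla_x\mathcal V$ and on~$y^{1-2s}\partial_y\mathcal V$. Translating the layer and passing to limits, I will show that~$\mathcal V(\cdot+x,\cdot)\to\pm1$ locally uniformly, together with its weighted gradients, as~$x\to\pm\infty$. Monotonicity of~$v$ gives~$\partial_x\mathcal V>0$, since the Poisson kernel is positive.

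\emph{Step 2: Hamiltonian identity.} I will define
\[
H(x):=\frac{d_s}{2}\int_0^{+\infty} y^{1-2s}\big((\partial_x\mathcal V)^2-(\partial_y\mathcal V)^2\big)(x,y)\,dy+c_s^{-1}G(v(x)).
\]
Differentiating under the integral, using the equation in the form~$\partial_y(y^{1-2s}\partial_y\mathcal V)=-y^{1-2s}\partial_{xx}\mathcal V$, and integrating by parts in~$y$, the bulk terms should cancel. The boundary term at~$y=0$ should exactly cancel~$c_s^{-1}f(v)v'$, so that~$H'\equiv0$. Here I will need decay of the integrand as~$y\to\infty$ and control near~$y=0$, which come from Step~1. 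Letting~$x\to\pm\infty$ and using that the weighted gradients vanish in the limit (dominated convergence via Step~1), I get~$H\equiv c_s^{-1}G(1)=c_s^{-1}G(-1)$. This yields the equality~$G(1)=G(-1)$.

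\emph{Step 3: the strict inequality.} This is the main obstacle. By Step~2,~$c_s^{-1}(G(v(x))-G(1))=\frac{d_s}{2}\int_0^\infty y^{1-2s}\big((\partial_y\mathcal V)^2-(\partial_x\mathcal V)^2\big)\,dy$, and I must show this is positive. I would first prove~$G\ge G(1)$ on~$(-1,1)$ using that monotone layers are local minimizers of the extended energy. If~$G(t)<G(1)$ for some~$t$, a competitor equal to~$t$ on a large region, glued to~$\mathcal V$ near the boundary of a large box with a cut-off, would lower the energy for large boxes. Then, for strictness, suppose~$G(t_0)=G(1)$ with~$t_0=v(x_0)$. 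Then~$x_0$ is a minimum point of the smooth function~$G(v(\cdot))-G(1)\ge0$, hence~$f(t_0)v'(x_0)=0$, and so~$f(t_0)=0$. In that case the constant~$t_0$ is a solution, and I would try to derive a contradiction via a sliding argument comparing~$v$ with the constant~$t_0$. Alternatively, I would apply the Hamiltonian identity to the layer restricted to the half-line~$\{x>x_0\}$, to obtain a layer-type object connecting~$t_0$ to~$1$ at zero ``energy gap''. This should contradict the strict positivity of~$\partial_x\mathcal V$ through the strong maximum principle for the linearized operator satisfied by~$\partial_x\mathcal V$. If this step resists, I would fall back on citing the argument of~\cite{CS14} directly for strictness.
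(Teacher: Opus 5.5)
There is a sign error in Step~2 and, more importantly, a genuine gap in Step~3.

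\textbf{The sign error in Step~2.} With your normalization $d_s\lim_{y\to0^+}y^{1-2s}\partial_y\mathcal V=c_s^{-1}f(v)$, the integration by parts gives
\[
\frac{d}{dx}\,\frac{d_s}{2}\int_0^{+\infty}y^{1-2s}\big((\partial_x\mathcal V)^2-(\partial_y\mathcal V)^2\big)\,dy=d_s\,v'(x)\lim_{y\to0^+}y^{1-2s}\partial_y\mathcal V=c_s^{-1}f(v(x))\,v'(x).
\]
So the boundary term adds to $c_s^{-1}f(v)v'$ rather than cancelling it. Your $H$ has $H'=2c_s^{-1}f(v)v'$, and the conserved quantity is the integral \emph{minus} $c_s^{-1}G(v(x))$. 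The conclusion $G(1)=G(-1)$ survives. However, the correct identity is
\[
c_s^{-1}\big(G(v(x))-G(1)\big)=\frac{d_s}{2}\int_0^{+\infty}y^{1-2s}\big((\partial_x\mathcal V)^2-(\partial_y\mathcal V)^2\big)\,dy,
\]
the analogue of $\tfrac12(v')^2=G(v)-G(1)$ in the local case. This is the opposite of the quantity you set out to prove positive.

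\textbf{The gap in Step~3.} Neither route you propose for strictness can work. A monotone layer may well cross a zero $t_0$ of $f$; the layer $\frac{2}{\pi}\arctan x$ of Section~\ref{atanx} crosses the zero $0$ of $f$. So a sliding comparison with the constant $t_0$ gives no contradiction unless $G(t_0)=G(1)$ is used. The Hamiltonian at $x_0$ only says that $\int_0^{+\infty}y^{1-2s}(\partial_x\mathcal V)^2\,dy=\int_0^{+\infty}y^{1-2s}(\partial_y\mathcal V)^2\,dy$ there, which is not contradictory. A nonlocal solution restricted to a half-line solves no equation, so no layer from $t_0$ to $1$ is produced.

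The missing idea is to prove the Hamiltonian inequality at \emph{every} height. This is Lemma~5.3 of~\cite{CS14}, reproduced in the paper as Proposition~\ref{poas} and evaluated at $y=0$ in Corollary~\ref{finmi}. Set
\[
\Psi(x,y):=\frac{d_s}{2}\int_0^{y}t^{1-2s}\big((\partial_x\mathcal V)^2-(\partial_y\mathcal V)^2\big)(x,t)\,dt-c_s^{-1}\big(G(v(x))-G(1)\big).
\]
The same integration by parts gives $\partial_x\Psi=-d_s\,y^{1-2s}\partial_x\mathcal V\,\partial_y\mathcal V$ and $\partial_y\Psi=\frac{d_s}{2}y^{1-2s}\big((\partial_x\mathcal V)^2-(\partial_y\mathcal V)^2\big)$. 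Hence ${\rm div}(y^{2s-1}\nabla\Psi)=(1-2s)d_s\,y^{-1}(\partial_y\mathcal V)^2$. Since $\partial_x\mathcal V>0$, you may replace one factor $\partial_y\mathcal V$ by $-y^{2s-1}\partial_x\Psi/(d_s\partial_x\mathcal V)$, which gives the linear equation
\[
{\rm div}(y^{2s-1}\nabla\Psi)+\frac{(1-2s)\,\partial_y\mathcal V}{y^{2-2s}\,\partial_x\mathcal V}\,\partial_x\Psi=0\quad\mbox{in }\R^2_+ .
\]
Moreover $\Psi\to0$ at infinity: as $y\to+\infty$ by the corrected identity, and as $|x|\to+\infty$ by the decay of the weighted gradients. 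So a nonnegative value of $\Psi$ forces a maximum point in $\overline{\R^2_+}$.

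\emph{Interior maximum.} By the strong maximum principle $\Psi$ would be constant. That gives $\partial_x\mathcal V\,\partial_y\mathcal V\equiv0$ and $(\partial_x\mathcal V)^2\equiv(\partial_y\mathcal V)^2$, i.e.\ $\nabla\mathcal V\equiv0$, a contradiction.

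\emph{Boundary maximum at $(x_0,0)$.} There $0=\partial_x\Psi(x_0,0)=-c_s^{-1}f(v(x_0))v'(x_0)$, so $f(v(x_0))=0$. The boundary regularity of $y^{1-2s}\partial_y\mathcal V$ then gives $\partial_y\mathcal V(x_0,y)\to0$. Hence $y^{2s-1}\partial_y\Psi(x_0,y)\to\frac{d_s}{2}v'(x_0)^2>0$, which is incompatible with a maximum at $(x_0,0)$. This is the step where $\partial_x\bar u>0$ and the Hopf principle enter in Proposition~\ref{poas}.

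Therefore $\Psi<0$ in $\overline{\R^2_+}$, and at $y=0$ this reads exactly $G(v(x))>G(1)$. This argument yields the non-strict and the strict inequality together. The local-minimality detour in your Step~3 is therefore unnecessary. Its cut-off would in any case need a boundary transition of bounded width, since a transition over a length comparable to $R$ costs potential energy of order $R$, the same order as the gain.
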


This statement is, however, too restrictive for our purposes. Indeed, by~\eqref{REgg}, large values of~$\alpha$ and~$\beta$ only ensure Lipschitz regularity for~$V$, which falls short of the assumptions required in~\cite{CS14}.

Nevertheless, we point out in this section that the regularity of~$f$ in~\cite{CS14} is used solely to obtain regularity for the associated boundary layer. Hence, since in our setting the profile~$\phi$ is assumed to be smooth from the start (see~\eqref{valesem}), we can still recover the same characterization of~$V$ as a double-well potential.

We will use the following notation:
\[\R^2_+:=\{(x,y) \in \R^2: \, x\in \R, \, y>0 \} ,\quad \partial\R^2_+:=\{y=0\} \quad\mbox{and}\quad \overline{\R^2_+}:=\R^2_+ \cup \partial\R^2_+.   \]

Let~$s \in (0,1)$ and set 
\begin{equation*}\label{09iu}
d_s :=2^{2s-1} \frac{ \Gamma(s)}{\Gamma(1-2s)} \qquad\mbox{and}\qquad q_s:= s(1-s)\frac{2^{2s}\Gamma\big(\frac{1+2s}2\big)}{\sqrt{\pi}\Gamma(2-s)} ,
\end{equation*}
where~$\Gamma$ denotes the Euler Gamma function. We mention that these quantities coincide respectively with~$d_s$ and~$C_{1,s}$ defined in~\cite[Remark 11]{CS14}.

We define the Poisson kernel in~$\R^2_+$ as
\begin{equation*}
P_s (x,y) :=  \frac{p_s y^{2s}}{\left( |x|^2+|y|^2\right)^{\frac{1+2s}2}},
\end{equation*}
where~$p_s$ is a normalization constant such that~$\int_\R P_s(x,y)\, dx =1$ for all~$y>0$ (see Remark~\ref{ifkt} for an explicit formula for~$p_s$).

We also consider
\begin{equation}\label{hd75}
H_s(x) :=  \frac{p_{s}}{(1+|x|^2)^{\frac{1+2s}{2}}}
\end{equation}
and we observe that
\[P_s(x,y) =  y^{-1}H_s\left(\frac xy\right). \]
As a consequence,  
\begin{equation}\label{bvcnxiwoytr843y8}
 \int_{\R} H_s(z) \, dz  = \int_{\R}  P_s(z,1)  \, dz=1.
\end{equation}

Moreover, given a function~$v:\R\to\R$, we consider in~$\R^2_+$ the convolution in the~$x$ variable
\begin{equation}\label{346ygf}
\bar{u}(\cdot,y):=P_s(\cdot,y) \ast v.
\end{equation}
The change of variable~$\xi:=\tau/ y $ gives
\begin{equation}\label{dyd5}
\begin{split}&
\bar{u}(x,y) = \int_{\R} P_s(\tau,y) v(x-\tau)\, d\tau = \int_{\R} y^{-1}H_s\left(\frac\tau y\right) v(x-\tau)\, d\tau\\&\qquad =
 \int_{\R} H_s(\xi) v(x-y\xi)\, d\xi.
\end{split}
\end{equation}

The following two results characterize~$\bar{u}$ in terms of the extension problem for the fractional Laplacian (see~\cite{MR2354493, CS14} for an introduction to this topic).

\begin{prop}[Remark 3.8 in~\cite{CS14}]\label{h08}
Let~$v \in C(\R)\cap L^{\infty} (\R)$. Then, the function~$\bar{u}$ in~\eqref{346ygf} is the only pointwise solution to
\begin{equation}\label{jky2}
\begin{cases}
{\rm div} (y^{1-2s}\nabla u) =0& \mbox{in} \quad \R^2_+,\\
u = v &\mbox{on} \quad \partial\R^2_+,
\end{cases}
\end{equation}
in the class of functions~$ C(\overline{\R^2_+})\cap L^{\infty}(\overline{\R^2_+})$.

Moreover, let~$v \in C^{2,\beta}_{\rm loc}(\R) \cap L^{\infty}(\R)$ for some~$\beta\in(0,1)$ and~$f \in C(\R)$. 
Then, $v$ solves 
\begin{equation}\label{8gps}
L_s v = f(v) \quad\mbox{in } \R
\end{equation}
if and only if the function~$\bar{u}$ in~\eqref{346ygf} solves
\begin{equation}\label{0ffogf}
\begin{cases}
{\rm div} \left( y^{1-2s} \nabla u\right) = 0 &\mbox{in} \quad \R^2_+,\\
d_s \lim\limits_{y\to 0^+} y^{1-2s} \partial_y u =q_s f(u) &\mbox{on} \quad \partial\R^2_+ ,
\end{cases}
\end{equation} 
and the trace of~$\bar{u}$ on~$\partial \R^2_+$ is~$v$.
\end{prop}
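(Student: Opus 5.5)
This is Proposition~\ref{h08}, quoted from~\cite{CS14}; it has two parts, uniqueness of the bounded extension and the equivalence between~\eqref{8gps} and~\eqref{0ffogf}.

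\emph{First part: $\bar u$ solves~\eqref{jky2}.} Since $P_s(\cdot,y)\ge0$ has unit mass and $P_s(\cdot,y)$ concentrates at the origin as $y\to0^+$, the function $\bar u$ in~\eqref{346ygf} is bounded by $\|v\|_{L^\infty(\R)}$. Using the representation~\eqref{dyd5} and dominated convergence, $\bar u$ is continuous up to $\partial\R^2_+$ with trace $v$. Next I would compute directly that $P_s$ satisfies ${\rm div}(y^{1-2s}\nabla P_s)=0$ in $\R^2_+$. Differentiating under the integral sign, which is justified because $v$ is bounded and the derivatives of $P_s$ are integrable in $x$ locally uniformly in $y>0$, this gives that $\bar u$ solves the equation in~\eqref{jky2}.

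\emph{First part: uniqueness.} Let $w$ be the difference of two bounded continuous solutions, so $w$ vanishes on $\partial\R^2_+$. Reflect $w$ oddly in $y$. The odd reflection is a bounded weak solution of the degenerate equation with $A_2$ weight $|y|^{1-2s}$ across $\{y=0\}$. To check this, I would test with functions vanishing near $\{y=0\}$ and use a cutoff argument based on the boundedness of $w$ and on local gradient estimates. A Liouville theorem for bounded solutions of such weighted equations, from Fabes--Kenig--Serapioni Harnack theory, then forces $w$ to be constant, hence $w\equiv0$. Alternatively, a maximum principle with the barrier $\epsilon(y^{2s}+\ldots)$ on large half-balls would do the same job.

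\emph{Second part: the Neumann identity.} The core is the formula
\[
\lim_{y\to0^+}y^{1-2s}\partial_y\bar u(x,y)=c\,L_s v(x),
\]
with $c=q_s/d_s$ after matching the normalisations of~\cite{CS14}. To prove it, I would write
\[
\frac{\bar u(x,y)-v(x)}{y^{2s}}=\int_\R\frac{p_s\,\bigl(v(x-\tau)-v(x)\bigr)}{(\tau^2+y^2)^{\frac{1+2s}2}}\,d\tau .
\]
As $y\to0^+$, this converges to $p_s\,L_s v(x)$. The convergence uses the $C^{2,\beta}_{\rm loc}$ bound near $\tau=0$, after symmetrising in $\tau$ to handle the principal value, and boundedness of $v$ at infinity.

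\emph{Obstacle: passing from the incremental quotient to $y^{1-2s}\partial_y\bar u$.} This is where I expect the main difficulty. I would show that $y^{1-2s}\partial_y\bar u(x,y)$ itself has a limit as $y\to0^+$. Differentiating~\eqref{dyd5} in $y$ gives $\partial_y\bar u=-\int H_s(\xi)\,\xi\,v'(x-y\xi)\,d\xi$. The difficulty is that $\xi H_s(\xi)$ is not integrable when $s\le1/2$, so the integral must be handled by integrating by parts or by symmetrising. Alternatively, I would use the equation in the form $\partial_y(y^{1-2s}\partial_y\bar u)=-y^{1-2s}\partial_{xx}\bar u$. Since $\partial_{xx}\bar u$ is bounded near the boundary by the $C^2$ bound on $v$, the right-hand side is integrable near $y=0$, so $y^{1-2s}\partial_y\bar u$ has a limit. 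Then L'H\^opital applied to the quotient $(\bar u-v)/y^{2s}$ identifies this limit as $2s$ times $p_sL_sv(x)$.

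\emph{Conclusion.} With the identity in hand, \eqref{8gps} holds iff \eqref{0ffogf} holds, since the two conditions coincide pointwise once the constants $d_s,q_s$ are matched as in~\cite{CS14}. For the converse direction I would use the uniqueness from the first part: any solution of~\eqref{0ffogf} in the bounded class with trace $v$ equals $\bar u$.
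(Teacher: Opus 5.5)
Your argument is correct, but it is not what the paper does. The paper gives no proof of this proposition: it imports it from \cite{CS14}. Its only in-house treatment of the Neumann limit is the derivation of \eqref{ceqh} inside Proposition~\ref{pi9e}, which goes directly at $y^{1-2s}\partial_y\bar u$. There the authors integrate by parts as in \eqref{65edfg}, which also removes the non-integrability of $\xi H_s(\xi)$ that you noticed. They then subtract $v(x)$ using $\int_\R (H_s+\xi H_s')\,d\xi=0$, split $|\xi|\gtrless 1$, use \eqref{kok} and second-order Taylor bounds to show the remainders $M_1$, $M_2$ are $O(y^{2-2s})$, and conclude by dominated convergence.

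Your route is essentially the classical incremental-quotient computation of \cite{MR2354493}. The quotient $(\bar u-v)/y^{2s}$ is literally a regularised $p_sL_sv$, so its limit follows from dominated convergence in one line. You also supply the step that is usually glossed over, namely that $\lim_{y\to0^+}y^{1-2s}\partial_y\bar u$ exists: you read the equation as $\partial_y(y^{1-2s}\partial_y\bar u)=-y^{1-2s}\partial_{xx}\bar u$ and integrate in $y$. L'H\^opital then identifies the limit as $2sp_sL_sv(x)$, in agreement with \eqref{ceqh}. This is shorter and works pointwise under local regularity. The paper's computation instead assumes global $C^{2,\beta}$ data and produces at the same time the uniform bound \eqref{AGGBOUNED00} used in Proposition~\ref{uvr}. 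Your identity, integrated from $y=0$, would also give that bound for $y\in(0,1)$ under global assumptions.

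Some points to tighten:

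\emph{Localisation.} Since $v$ is only $C^{2,\beta}_{\rm loc}$, the boundedness of $\partial_{xx}\bar u(x,\cdot)$ near $y=0$ does not follow from a global $C^2$ bound. Split $v=\chi v+(1-\chi)v$ with $\chi\in C^\infty_c(\R)$ equal to $1$ near $x$. For the first piece use $\int_\R H_s(\xi)(\chi v)''(x-y\xi)\,d\xi$; for the second use $|\partial_{xx}P_s(z,y)|\le Cy^{2s}|z|^{-3-2s}$ for $|z|\ge\delta$.

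\emph{Converse direction.} The converse needs no uniqueness. The statement concerns $\bar u$ itself, and your identity holds for $\bar u$ at every $x$ whether or not $v$ solves \eqref{8gps}.

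\emph{Constants.} The equality $2sp_s=q_s/d_s$ is an actual computation, not a convention. The unit-mass normalisation gives $p_s^{-1}=\int_\R(1+\xi^2)^{-\frac{1+2s}2}\,d\xi=\sqrt\pi\,\Gamma(s)/\Gamma(\frac{1+2s}2)$. The equality then holds with the value $d_s=2^{2s-1}\Gamma(s)/\Gamma(1-s)$ of \cite{CS14}. The factor $\Gamma(1-2s)$ in the displayed definition of $d_s$, and hence in Remark~\ref{ifkt}, appears to be a misprint; with it the equality would fail.

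\emph{Uniqueness.} Of your two arguments, the barrier one is the more self-contained. Set $\Phi_R:=x^2-\frac{y^2}{2-2s}+\frac{3-2s}{2-2s}R^{2-2s}y^{2s}$.
\begin{itemize}
\item It solves ${\rm div}(y^{1-2s}\nabla\Phi_R)=0$.
\item It equals $x^2\ge0$ on $\{y=0\}$.
\item It is $\ge R^2$ on the half-circle of radius $R$, because $t^{2s}\ge t^2$ on $[0,1]$.
\end{itemize}
Comparison in the half-disc then gives $|w|\le\|w\|_{L^\infty}\Phi_R/R^2\to0$ as $R\to+\infty$. The weak maximum principle holds there via a perturbation $\epsilon e^{x}$, since the singular drift acts only in $y$. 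The reflection route, by contrast, first needs a Caccioppoli estimate for $(w-\delta)^+$ and $(w+\delta)^-$. This places $w$ in the weighted $H^1_{\rm loc}$ up to $\{y=0\}$, which is required before the Fabes--Kenig--Serapioni theory can be invoked.
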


\begin{prop}[Proposition 3.6 in~\cite{CS14}]\label{396f}
Let~$h\in C(\R)$ and~$u \in C^2(\R^2_+)$ be such that ${y^{1-2s}\partial_y u \in C(\overline{\R^2_+}).}$  If~$u$ is a pointwise solution to
\begin{equation*}
\begin{cases}
{\rm div}(y^{1-2s}\nabla u) =0 &\mbox{in} \ \R^2_+,\\
\lim\limits_{y\to 0^+} y^{1-2s}\partial_y u   = h &\mbox{on } \partial\R^2_+,
\end{cases}
\end{equation*}
then~$w:=y^{1-2s}\partial_y u $ is a pointwise solution to
\begin{equation}\label{wddw4}
\begin{cases}
{\rm div} (y^{2s-1}\nabla w)=0&\mbox{in} \ \R^2_+,\\
w=h&\mbox{on } \partial\R^2_+.
\end{cases}
\end{equation}

\end{prop}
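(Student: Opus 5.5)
The plan is a direct computation in the interior of~$\R^2_+$, followed by reading off the boundary condition from the hypotheses. Since the coefficient~$y^{1-2s}$ is smooth and positive in~$\R^2_+$, the operator~${\rm div}(y^{1-2s}\nabla\,\cdot\,)$ is uniformly elliptic with smooth coefficients on every compact subset of the open half-plane. Standard interior elliptic regularity (Schauder estimates, bootstrapped) will therefore upgrade~$u\in C^2(\R^2_+)$ to~$u\in C^\infty(\R^2_+)$. This lets me differentiate the equation once more and interchange mixed partial derivatives freely. The same then holds for~$w=y^{1-2s}\partial_y u$, which is smooth in~$\R^2_+$.

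Next I expand the equation for~$u$. For~$y>0$ it reads
\[
y^{1-2s}\partial_{xx}u+\partial_y\big(y^{1-2s}\partial_y u\big)=0,
\qquad\mbox{that is}\qquad \partial_y w=-y^{1-2s}\partial_{xx}u .
\]
Hence~$y^{2s-1}\partial_y w=-\partial_{xx}u$, and differentiating in~$y$ gives
\[
\partial_y\big(y^{2s-1}\partial_y w\big)=-\partial_{xx}\partial_y u .
\]
On the other hand,~$y$ does not depend on~$x$, so
\[
\partial_x\big(y^{2s-1}\partial_x w\big)=y^{2s-1}\,y^{1-2s}\,\partial_{xx}\partial_y u=\partial_{xx}\partial_y u .
\]
Summing the two identities, and using that the mixed derivatives commute by smoothness, I obtain~${\rm div}(y^{2s-1}\nabla w)=0$ pointwise in~$\R^2_+$.

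For the boundary condition in~\eqref{wddw4}, the hypothesis~$y^{1-2s}\partial_y u\in C(\overline{\R^2_+})$ says that~$w$ extends continuously to~$\partial\R^2_+$. By assumption its boundary values are~$\lim_{y\to0^+}y^{1-2s}\partial_y u=h$, so~$w=h$ on~$\partial\R^2_+$ in the pointwise sense, with~$h\in C(\R)$ consistent with this continuity.

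The only delicate step is the regularity justification: the statement assumes only~$u\in C^2$, while the computation uses third derivatives of~$u$. I would handle this by invoking interior regularity for the weighted equation away from~$\{y=0\}$, where the weight is smooth and bounded above and below on compact sets. If one prefers to avoid this, an alternative is to verify the equation for~$w$ in the distributional sense against test functions compactly supported in~$\R^2_+$, and then use hypoellipticity to conclude that~$w$ is a classical solution.
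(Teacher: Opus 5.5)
Your proposal is correct and follows essentially the same route as the source of this result. The paper does not reprove it but quotes it from \cite{CS14}, where, as far as I recall, it is established by the same direct computation: from $\partial_y w=-y^{1-2s}\partial_{xx}u$ you get $\partial_y(y^{2s-1}\partial_y w)=-\partial_y\partial_{xx}u=-\partial_x(y^{2s-1}\partial_x w)$, and the boundary condition is just the continuity of $w$ up to $\partial\R^2_+$.

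Your justification of the third derivatives handles the one point the $C^2$ hypothesis leaves implicit, and both of your options work. Interior elliptic regularity applies because the weight is smooth and positive away from $\{y=0\}$. Alternatively, the distributional identity, which holds with $u\in C^2$ alone, combined with hypoellipticity gives the same conclusion.
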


The next two propositions provide regularity results for the function~$\bar{u}$ defined in~\eqref{346ygf}, together with some integral estimates. In addition, Proposition~\ref{pi9e} below
gives a self-contained characterization of the quantity~$w:=y^{1-2s}\partial_y\bar{u}$, whose boundedness will play a crucial role in the analysis of the potential associated with the solution~$v$ of~\eqref{8gps} (i.e. the function~$G$ satisfying~$G'=f$). We refer to~\cite[Section 2.3]{MR2354493} for a physical interpretation of~$w$ as a stream function of~$\partial_x\bar{u}$ in~$\R^2_+$.

\begin{prop}\label{pi9e}
Let~$v \in C^{2,\beta}(\R) $ for some~$\beta\in(0,1)$. Then, there exist~$\bar{\beta}\in (0,\min\{\beta,2s\})$ and~$C_1>0$, depending on~$s$ and~$\bar\beta$, such that
\begin{equation}\label{rf5}
\Vert \bar{u} \Vert_{C^{\bar{\beta}}(\R^2_+)} + \Vert \partial_x\bar{u} \Vert_{C^{\bar{\beta}}(\R^2_+)} + \Vert \partial_{xx}\bar{u} \Vert_{C^{\bar{\beta}}(\R^2_+)} \leq C_1 \Vert v \Vert_{C^{2,\bar{\beta}}(\R)}.
\end{equation}

Moreover, it holds that
\begin{equation}\label{67d8}
|\nabla\bar{u}(x,y)|\le\frac{C_2\Vert v \Vert_{L^\infty(\R)}}{y} \quad\mbox{for any~$ y>0$},
\end{equation} 
for some~$C_2>0$ depending on~$s$.

Furthermore, let~$w:= y^{1-2s} \partial_y\bar{u}$. Then,
\begin{equation}\label{AGGBOUNED00}
\Vert w \Vert_{L^{\infty}(\R^2_+)}\le C_3\big(\Vert v\Vert_{L^{\infty}(\R)}+\Vert v''\Vert_{L^{\infty}(\R)}\big),
\end{equation} for some~$C_3>0$ depending on~$s$.

Also, $w$ admits the representation formula\footnote{We point out that the additional constant~$2sp_s$ in~\eqref{sd09} and~\eqref{ceqh}
arises from the fact that our definition of the operator~$L_s$  in~\eqref{fralapc} does not include any renormalizing constant. This choice is motivated by the fact that the dependance on~$s$ is not relevant for the purposes of this paper.}
\begin{equation}\label{sd09}
w(x,y)=2sp_s \big(P_{1-s}(\cdot,y)\ast L_s v\big)(x)=2sp_s  \int_\R H_{1-s}(\xi) L_sv(x-y\xi)\, d\xi
\end{equation}
and satisfies\footnote{Notice that~\eqref{AGGBOUNED00} and~\eqref{ceqh} imply that~$w(x,0)= 2 s p_sL_sv(x)$ is bounded in~$\R$, as expected from the fact that~$v \in C^{2,\beta}(\R)$. The representation formula in~\eqref{sd09} completes the characterization of~$w$ in the upper halfspace.} 
\begin{equation}\label{ceqh}
\lim_{y\to 0^+}w(x,y) = 2 s p_s L_s v (x) \quad\mbox{for any }x \in \R.
\end{equation}
\end{prop}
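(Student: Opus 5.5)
The plan is to argue directly from the Poisson representation~\eqref{dyd5} together with the extension characterisation of Proposition~\ref{h08}. I will obtain the three estimates~\eqref{rf5}, \eqref{67d8} and~\eqref{AGGBOUNED00} first, and then prove~\eqref{sd09}--\eqref{ceqh} by identifying~$w$ through uniqueness for the conjugate problem.

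For~\eqref{rf5}, I start from $\bar u(x,y)=\int_\R H_s(\xi)v(x-y\xi)\,d\xi$. Since~$H_s\in L^1$ with unit mass by~\eqref{bvcnxiwoytr843y8}, derivatives in~$x$ pass under the integral, so $\partial_x^k\bar u(\cdot,y)=P_s(\cdot,y)\ast v^{(k)}$ for $k=0,1,2$. The sup bounds and the $x$-H\"older bounds are then immediate. For the H\"older modulus in~$y$ I write
$$g(x,y_1)-g(x,y_2)=\int_\R H_s(\xi)\big(g(x-y_1\xi)-g(x-y_2\xi)\big)\,d\xi$$
for $g=v^{(k)}$, and bound the integrand by $\min\{2\|g\|_\infty,[g]_{C^{\beta}}|y_1-y_2|^{\beta}|\xi|^{\beta}\}$. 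The factor $|\xi|^{\bar\beta}H_s(\xi)$ is integrable exactly when $\bar\beta<2s$, and this is where the restriction $\bar\beta<\min\{\beta,2s\}$ enters.

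For~\eqref{67d8}, I use the scaling $P_s(x,y)=y^{-1}H_s(x/y)$. This gives $\nabla_{x,y}P_s(\cdot,y)=y^{-2}(\nabla H)(\cdot/y)$-type kernels with $L^1$ norm equal to $C/y$, so $|\nabla\bar u|\le C\|v\|_\infty/y$.

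For~\eqref{AGGBOUNED00}, I split according to the size of~$y$. When $y\ge1$, \eqref{67d8} gives $|w|\le y^{1-2s}C\|v\|_\infty/y\le C\|v\|_\infty$. When $y<1$, I need a bound on $\partial_y\bar u$ of order $y^{2s-1}$. I obtain it from the symmetrised form
$$\bar u(x,y)-v(x)=\tfrac12\int_\R H_s(\xi)\big(v(x+y\xi)+v(x-y\xi)-2v(x)\big)\,d\xi .$$
Differentiating in~$y$ gives an integrand bounded by $\min\{y|\xi|^2\|v''\|_\infty,\,|\xi|\,|v'(x+y\xi)-v'(x-y\xi)|\}$. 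Splitting at $|\xi|=1/y$ and using $H_s\sim|\xi|^{-1-2s}$ yields $|\partial_y\bar u|\le C(\|v\|_\infty+\|v''\|_\infty)y^{2s-1}$; the $\|v'\|_\infty$ term produced along the way is controlled by interpolation. The computation differs slightly according to whether $2s<1$ or $2s\ge1$, but the tail integrals converge in both cases.

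For the representation, I set $\tilde w:=2sp_s\,P_{1-s}(\cdot,y)\ast L_sv$. Because $v\in C^{2,\beta}$, $L_sv$ is bounded and continuous, so Proposition~\ref{h08} (with $1-s$ in place of~$s$) shows that $\tilde w$ is the unique bounded continuous solution of ${\rm div}(y^{2s-1}\nabla\tilde w)=0$ with trace $2sp_sL_sv$. By Proposition~\ref{h08} applied with $f(v)$ replaced by $L_sv$, which is allowed because we may view $v$ as solving $L_sv=F(x)$, $\bar u$ satisfies $\lim_{y\to0^+}y^{1-2s}\partial_y\bar u=2sp_sL_sv$. Here $q_s/d_s$ is matched to $2sp_s$ through the constant computation in Remark~\ref{ifkt}; this is a bookkeeping step. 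Proposition~\ref{396f} then shows that $w$ solves~\eqref{wddw4} with the same boundary datum. The main obstacle is verifying that~$w$ lies in the uniqueness class, namely that it is bounded and continuous up to $y=0$. Boundedness is~\eqref{AGGBOUNED00}. For continuity up to the boundary I would argue directly: I would show that $y^{1-2s}\partial_y\bar u(x,y)\to 2sp_sL_sv(x)$ locally uniformly by rewriting $\partial_y\bar u$ through the symmetrised formula above, rescaling $\xi=\eta/y$, and passing to the limit by dominated convergence. This also proves~\eqref{ceqh}. Uniqueness then gives $w=\tilde w$, which is~\eqref{sd09}.
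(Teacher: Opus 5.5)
There is a genuine gap in the case $y<1$ of \eqref{AGGBOUNED00}. Your H\"older bounds, \eqref{67d8}, the case $y\ge1$, and the identification of $w$ via Proposition~\ref{396f} plus uniqueness for the $(1-s)$-problem all match the paper.

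Differentiating your symmetrised formula puts the $y$-derivative on $v$, so the integrand becomes $\tfrac12 H_s(\xi)\,\xi\,\big(v'(x+y\xi)-v'(x-y\xi)\big)$. For $|\xi|>1/y$ the only bound you have is $2\|v'\|_{L^\infty(\R)}\,|\xi|\,H_s(\xi)$, which is of order $|\xi|^{-2s}$. This is integrable at infinity only when $s>1/2$. For $s\in(0,1/2]$ the tail $\int_{1/y}^{+\infty}\xi^{-2s}\,d\xi$ diverges, and the differentiated integral is then only conditionally convergent. So the claim that ``the tail integrals converge in both cases'' is false.

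The same defect breaks your dominated-convergence proof of \eqref{ceqh}. After rescaling, the integrand $\tfrac{p_s}{2}\,\eta\,\big(v'(x+\eta)-v'(x-\eta)\big)(y^2+\eta^2)^{-\frac{1+2s}{2}}$ has no $y$-uniform integrable majorant at infinity when $s\le 1/2$. You derive continuity of $w$ up to $\{y=0\}$ from this limit. Hence membership of $w$ in the uniqueness class, and therefore \eqref{sd09}, is also unproved in that range.

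The missing idea is to keep the derivative on the kernel, as the paper does. Differentiating $P_s$ in $y$ (equivalently, integrating by parts) gives
\[
\partial_y\bar u(x,y)=-\frac1y\int_\R K(\xi)\,v(x-y\xi)\,d\xi,\qquad K:=H_s+\xi H_s'=p_s(1-2s\xi^2)(1+\xi^2)^{-\frac{3+2s}{2}}.
\]
The kernel $K$ is even, has zero integral, and is $O(|\xi|^{-1-2s})$. Therefore, after the change of variables $\eta=y\xi$,
\[
y^{1-2s}\partial_y\bar u(x,y)=-\frac{p_s}{2}\int_\R \frac{y^2-2s\eta^2}{(y^2+\eta^2)^{\frac{3+2s}{2}}}\,\big(v(x+\eta)+v(x-\eta)-2v(x)\big)\,d\eta .
\]
The fraction is bounded by $2|\eta|^{-1-2s}$ uniformly in $y$. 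The second difference is bounded by $\min\{\|v''\|_{L^\infty(\R)}\eta^2,\,4\|v\|_{L^\infty(\R)}\}$. This gives \eqref{AGGBOUNED00} for every $s\in(0,1)$, with no $\|v'\|$ term and no interpolation.

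The majorant is independent of $x$ and $y$, so dominated convergence yields both continuity of $w$ up to the boundary and \eqref{ceqh}. The constant $2sp_s$ comes out directly, because the fraction tends to $-2s|\eta|^{-1-2s}$.

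This direct computation also makes two of your shortcuts unnecessary, which matters. Proposition~\ref{h08} is stated for right-hand sides of the form $f(v)$, not an $x$-dependent $F(x)$. Moreover, in the paper the formula for $p_s$ in Remark~\ref{ifkt} is itself deduced from \eqref{ceqh}, so using it to match $q_s/d_s$ with $2sp_s$ would be circular.
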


\begin{proof}
The regularity of~$v$ and~\eqref{dyd5} imply that
\begin{equation*}
\partial_x\bar{u}(x,y) = \int_\R H_s(\xi ) v'(x-y\xi) \, d\xi \qquad\mbox{and}\qquad \partial_{xx}\bar{u}(x,y) = \int_\R H_s(\xi ) v''(x-y\xi) \, d\xi.
\end{equation*}
Hence, recalling~\eqref{bvcnxiwoytr843y8}, we infer that  
\begin{equation}\label{bnvmc43o09i75t9804398ygtijhgro}
\begin{split}&\|\bar u\|_{L^\infty(\R^2_+)}\le
\|v\|_{L^\infty(\R)},\\
&\|\partial_x\bar u\|_{L^\infty(\R^2_+)}\le \|v'\|_{L^\infty(\R)}\\
{\mbox{and }} \qquad &
\|\partial_{xx}\bar u\|_{L^\infty(\R^2_+)}\le\|v''\|_{L^\infty(\R)}.\end{split}\end{equation}

Moreover, let~$\bar{\beta} \in (0,\min\{\beta,\,2s\})$. Then, using again~\eqref{bvcnxiwoytr843y8},
\[ \int_\R  |\xi|^{\bar{\beta}}H_s(\xi)\, d\xi \leq 2 \int_0^1 H_s(\xi)\, d\xi +2p_s\int_1^{+\infty} \xi^{-1-2s+\bar{\beta}}\, d\xi \leq
2+\frac{2p_s}{2s-\bar\beta}.\]
As a consequence, since~$v \in C^{2,\bar{\beta}}(\R)$,
we find that
\begin{equation*}
\begin{split}
|\bar{u}(x_1,y_1)-\bar{u}(x_2,y_2)| &\leq  \int_{\R} H_s(\xi) |v(x_1-y_1\xi)-v(x_2-y_2\xi)|\, d\xi\\& \leq 
C \Vert v\Vert_{C^{\beta}(\R)} \left(|x_1-x_2|^{\bar{\beta}} \int_{\R}H_s(\xi)+|y_1-y_2|^{\bar{\beta}} \int_\R |\xi|^{\bar{\beta}}H_s(\xi)\, d\xi \right)\\ &\leq
C(s) \Vert v\Vert_{C^{\beta}(\R)} \left(|x_1-x_2|^{\bar{\beta}}+|y_1-y_2|^{\bar{\beta}}\right).
\end{split}
\end{equation*}
An analogous argument can be done for~$\partial_{x}\bar{u}$ and~$\partial_{xx}\bar{u}$. Thus, recalling also~\eqref{bnvmc43o09i75t9804398ygtijhgro}
we obtain~\eqref{rf5}.

Now we take care of the estimate in~\eqref{67d8}. To this aim, we notice that
since~$v$ is bounded in~$\R$, recalling the definition of~$H_s$ in~\eqref{hd75}, we have, for any~$(x,y)\in \R^2_+$,
\begin{equation*}\label{0ojh4}
\lim_{\xi \to \pm \infty}  H_s(\xi)  v(x-y\xi)=0=\lim_{\xi \to \pm \infty} \xi H_s(\xi)  v(x-y\xi).
\end{equation*}
Also,~$H_s'$ is integrable in~$\R$ and so is~$\xi H'_s$, since
\begin{equation*}
\begin{split}
\int_{\R} |\xi H'_s(\xi)|\, d\xi= p_s(1+2s) \int_\R \frac{|\xi|^2}{(1+|\xi|^2)^{\frac{3+2s}2}}	\, d\xi  <+\infty.
\end{split}
\end{equation*}

Therefore, we can perform an integration by parts and obtain that
\begin{equation}\label{0oiuy}
\begin{split}&
\partial_x\bar{u}(x,y) = \int_\R H_s(\xi) v'(x-y\xi)\, d\xi = -\frac1y\int_\R H_s(\xi) \frac{d}{d\xi}\big(v(x-y\xi)\big)\, d\xi\\&\qquad= 
 \frac1y\int_\R H'_s(\xi)  v(x-y\xi)\, d\xi .
\end{split}
\end{equation}
and
\begin{equation}\label{65edfg}
\begin{split}&
\partial_y\bar{u}(x,y) = -\int_\R \xi H_s(\xi) v'(x-y\xi)\, d\xi = \frac1y\int_\R \xi H_s(\xi) \frac{d}{d\xi}\big(v(x-y\xi)\big)\, d\xi\\&\qquad= -
 \frac1y\int_\R \left(H_s(\xi)+\xi H'_s(\xi)\right)  v(x-y\xi)\, d\xi.
\end{split}
\end{equation}
Thus, \eqref{67d8} follows from~\eqref{0oiuy} and~\eqref{65edfg}.

We now turn our attention to the
function~$w= y^{1-2s}\partial_y\bar{u}$. We firstly check that
\begin{equation}\label{mpo}
\sup_{x\in\R,\,y\geq1} |y^{1-2s}\partial_y\bar{u}(x,y) | \leq C \Vert v \Vert_{L^{\infty}(\R)},
\end{equation}
for some positive~$C$ depending on~$s$.
Indeed,
by~\eqref{65edfg}, we have that, for all~$x\in\R$ and~$y\geq1$,
\begin{equation*}
\begin{split}
|y^{1-2s}\partial_y\bar{u}(x,y)| &= y^{-2s}\big|\int_\R \left(H_s(\xi)+\xi H'_s(\xi)\right)  v(x-y\xi)\, d\xi\big| \\&\leq
 \Vert v \Vert_{L^{\infty}(\R)}\int_\R \left(|H_s(\xi)|+ |\xi|\,| H'_s(\xi)|\right) \, d\xi ,
\end{split}
\end{equation*}
which yields~\eqref{mpo}.

We now establish the following, more delicate claim:
\begin{equation}\label{mpoe}
\sup_{x\in\R,\,y\in(0,1)} |y^{1-2s}\partial_y\bar{u}(x,y)| \leq C\big(\Vert v\Vert_{L^{\infty}(\R)}
+ \Vert v'' \Vert_{L^{\infty}(\R)}\big),
\end{equation}
for some positive~$C$ depending on~$s$.

To check~\eqref{mpoe}, we firstly observe that when~$|\xi|\geq 1$ 
\begin{equation}\label{kok}
\begin{split}
H_s(\xi)+\xi H_s'(\xi) &= \frac{p_s}{(1+|\xi|^2)^{\frac{1+2s}2}} \left(1-\frac{(1+2s)|\xi|^2}{1+|\xi|^2}\right)
\\ &=
\frac{p_s}{(1+|\xi|^2)^{\frac{1+2s}2}}
\Bigl(1-(1+2s)(1+|\xi|^{-2})^{-1}\Bigr)
\\& =
\frac{p_s}{(1+|\xi|^2)^{\frac{1+2s}2}}
\left(1-(1+2s)\left(1-\frac{|\xi|^{-2}}{1+|\xi|^{-2}}\right)
\right)\\&=
\frac{p_s}{(1+|\xi|^2)^{\frac{1+2s}2}}
\left(-2s+O(|\xi|^{-2})\right)
\\&=
\frac{-2sp_s}{(1+|\xi|^2)^{\frac{1+2s}2}}
+O(|\xi|^{-3-2s}).
\end{split}
\end{equation} 
Hence, using the change of variable~$t:=y\xi$ we get
\begin{equation}
\label{0uyd}\begin{split}&
y^{-2s}\int_{\{|\xi|\geq 1\}}\left(H_s(\xi)+\xi H_s'(\xi)\right) (v(x-y\xi)-v(x))\, d\xi \\&\quad=
y^{-2s}\int_{\{|\xi|\geq 1\}} 
\left(\frac{-2s p_s}{(1+|\xi|^2)^{\frac{1+2s}2}}+O(|\xi|^{-3-2s})\right)
(v(x-y\xi)-v(x)) \,d\xi \\&\quad=
\int_{\{|t|\geq y\}} \left(\frac{-2sp_s  }{ (y^2+|t|^2)^{\frac{1+2s}2}}+
y^{2}O(|t|^{-3-2s}) \right)
(v(x-t)-v(x)) \,dt  .
\end{split}
\end{equation}

For readability, we denote by
$$
M_1(x,y):=\int_{\{|t|\geq y\}} \frac{y^{2}(v(x-t)-v(x))}{|t|^{3+2s}} \,dt. $$
Also, we stress that, by symmetry,
\begin{equation*}
\int_{|t|\in(y,1)} \frac{ v'(x) t}{|t|^{3+2s}}\, dt=0 .
\end{equation*}
Therefore, by a Taylor expansion of~$v$, we obtain that, for any~$y\in(0,1)$,
\begin{equation}\label{bvnciewyut894y3}
\begin{split}
|M_1(x,y)| &\leq y^{2}\,\left|\int_{\{|t|\geq y\}}\frac{v(x-t)-v(x)}{|t|^{3+2s}}\, dt\right| \\&\leq
y^{2}\, \left|\int_{\{|t|\in(y,1)\}}\frac{v(x-t)-v(x)}{|t|^{3+2s}}\, dt\right|
+  2y^2 \Vert v \Vert_{L^{\infty}(\R)}\int_{\{|t|\geq 1\}}\frac{dt}{|t|^{3+2s}}\\ &\leq
y^2 \Vert v'' \Vert_{L^{\infty}(\R)}\int_{\{|t|\in(y,1)\}}\frac{dt}{ |t|^{1+2s}} + 2y^2 \Vert v \Vert_{L^{\infty}(\R)}
\int_{\{|t|\geq 1\}}\frac{dt}{|t|^{3+2s}}\\
&=
\frac{2(y^{2-2s}-y^2)\Vert v'' \Vert_{L^{\infty}(\R)}}{2s} +\frac{ 2y^2 \Vert v \Vert_{L^{\infty}(\R)}}{1+s}.
\end{split}
\end{equation}
As a consequence,~$M_1$ is uniformly bounded in~$\R\times (0,1)$ and~$M_1=O(y^{2-2s})$ as~$y \to 0^+$. 

We now consider the case~$|\xi|\leq 1$, namely~$|t|\leq y$ after setting~$t:=y\xi$. To do this, we exploit the first equality in~\eqref{kok} to observe
\begin{equation}\label{64765ubvwoue7y1234}\begin{split}&
\left|H_s\left(\frac{t}{y}\right)+\frac{t}y H_s'\left(\frac{t}y\right)\right| 
=  \frac{p_sy^{1+2s}}{(y^2+|t|^2)^{\frac{1+2s}{2}}} \left| 1-\frac{(1+2s)|t|^2}{y^2+|t|^2} \right|\\&\qquad=
\frac{p_sy^{1+2s}|y^2-2s|t|^2|}{(y^2+|t|^2)^\frac{3+2s}2}  
\le \frac{3p_s y^{3+2s}}{y^{3+2s}} =3p_s.
\end{split}\end{equation}
Moreover, we know that~$\xi H_s(\xi)+\xi^2 H_s'(\xi)$ is an odd function of~$\xi$, hence
\begin{equation}\label{64765ubvwoue7y12342}
v'(x) y\int_{\{|\xi|\leq 1\}}\xi \left(H_s(\xi)+\xi H_s'(\xi)\right)  \, d\xi = 0 \quad\mbox{for any~$y>0$.}
\end{equation}

Consequently, setting 
$$M_2(x,y):=y^{-2s}\int_{\{|\xi|\leq 1\}}\left(H_s(\xi)+\xi H_s'(\xi)\right) (v(x-y\xi)-v(x))\, d\xi,  $$ 
combining together the information in~\eqref{64765ubvwoue7y1234} and~\eqref{64765ubvwoue7y12342} and using a Taylor expansion, we gather
\begin{equation} \label{36}
\begin{split} 
|M_2(x,y)|&=
y^{-2s}\left|\int_{\{|\xi|\leq 1\}}\left(H_s(\xi)+\xi H_s'(\xi)\right) 
(v(x-y\xi)-v(x)+v'(x) y\xi)\, d\xi \right| \\ &=
y^{-1-2s}\left|\int_{\{|t|\leq y\}}\left(H_s\left(\frac{t}y\right)+\frac{t}y H_s'\left(\frac{t}y\right)\right)
(v(x-t)-v(x)+v'(x)t)\, dt \right| \\ &\leq
C y^{-1-2s} \Vert v'' \Vert_{L^{\infty}(\R)} \int_{\{|t|\leq y\}}|t|^2 \, dt \\&=
C y^{2-2s} \Vert v'' \Vert_{L^{\infty}(\R)} .
\end{split}\end{equation}
Then,~$M_2$ is uniformly bounded in~$\R\times (0,1)$ and~$M_2=O(y^{2-2s})$ as~$y \to 0^+$. 

We point out that by~\eqref{65edfg},~\eqref{0uyd} and the fact that
\begin{equation*}
\int_\R  \left( H_s(\xi)+\xi H'_s(\xi)\right) \, d\xi= \lim_{\xi\to+\infty}\xi H_s(\xi)-\lim_{\xi\to-\infty}\xi H_s(\xi)=  0,
\end{equation*}
the following holds
\begin{equation}\label{mnbvc98u7y6t5r}\begin{split}
y^{1-2s}\partial_y\bar{u}(x,y)&=- y^{-2s} \int_{\R}\left(H_s(\xi)+\xi H_s'(\xi)\right) (v(x-y\xi)-v(x))\, d\xi 
\\&=2sp_s \int_{\{|t|\geq y\}} \frac{v(x-t)-v(x) }{(y^2+|t|^2)^{\frac{1+2s}2}}\,dt+O(M_1(x,y))+M_2(x,y).
\end{split}\end{equation}
Therefore, recalling~\eqref{bvnciewyut894y3} and~\eqref{36} we obtain that,
for any~$x\in\R$ and~$y\in(0,1)$,
\begin{equation} \label{ckpp}\begin{split}
\left|y^{1-2s}\partial_y\bar{u}(x,y)\right|& \le
2sp_s \left|\int_{\{|t|\geq y\}} \frac{v(x-t)-v(x) }{(y^2+|t|^2)^{\frac{1+2s}2}}\,dt\right|
+Cy^{2-2s}\left( \Vert v \Vert_{L^{\infty}(\R)}+
\Vert v'' \Vert_{L^{\infty}(\R)}
\right).
\end{split}\end{equation}

We now observe that
\begin{eqnarray*}
&&\left|\int_{\{|t|\geq y\}} \frac{v(x-t)-v(x) }{(y^2+|t|^2)^{\frac{1+2s}2}}\,dt\right|=
\left|\int_{\{|t|\in(y ,1)\}} \frac{v(x-t)-v(x) }{(y^2+|t|^2)^{\frac{1+2s}2}}\,dt+\int_{\{|t|\geq 1\}} \frac{v(x-t)-v(x) }{(y^2+|t|^2)^{\frac{1+2s}2}}\,dt\right|\\&&\qquad
=\left|\int_{\{|t|\in(y ,1)\}} \frac{v(x-t)-v(x)+v'(x)t }{(y^2+|t|^2)^{\frac{1+2s}2}}\,dt+\int_{\{|t|\geq 1\}} \frac{v(x-t)-v(x) }{(y^2+|t|^2)^{\frac{1+2s}2}}\,dt\right|\\&&\qquad\le\Vert v'' \Vert_{L^{\infty}(\R)}
\int_{\{|t|\in(y ,1)\}} \frac{|t|^2 }{|t|^{1+2s}}\,dt+
2\Vert v \Vert_{L^{\infty}(\R)}\int_{\{|t|\geq 1\}} \frac{dt }{|t|^{{1+2s}}}\\&&\qquad\le
\frac{\Vert v'' \Vert_{L^{\infty}(\R)}}{1-s}
+
\frac{2\Vert v \Vert_{L^{\infty}(\R)}}s.
\end{eqnarray*}
{F}rom this and~\eqref{ckpp} we infer~\eqref{mpoe}.

Combining together~\eqref{mpo} and~\eqref{mpoe}, we obtain the bound in~\eqref{AGGBOUNED00}, as desired.

We now exploit~\eqref{mnbvc98u7y6t5r} and the Dominated Convergence Theorem to see that
\begin{equation*}\lim_{y\to 0^+}w(x,y)=
\lim_{y\to 0^+}y^{1-2s}\partial_y\bar{u}(x,y) = 2sp_s\, {\rm PV} \int_\R \frac{v(x-t)-v(x)}{|t|^{1+2s}}\, dt =  2sp_s L_sv(x),
\end{equation*}
that is~\eqref{ceqh}.

We are left to prove the representation formula for~$w$ in~\eqref{sd09}.
To do this, we recall that, by Proposition~\ref{h08}, the function~$\bar{u}$ solves the problem in~\eqref{jky2}. Also, the bound in~\eqref{AGGBOUNED00} together with the regularity of~$H_s$ and~$v$, implies that~$w\in C(\overline{\R^2_+})\cap L^{\infty}(\overline{\R^2_+})$. 
 As a consequence, we can employ Proposition~\ref{396f} to conclude that~$w$ solves~\eqref{wddw4}.

Furthermore, we point out that~\eqref{jky2} coincides with~\eqref{wddw4} (up to switching~$s$ with~$1-s$), hence the convolution in the~$x$ variable~$P_{1-s}(\cdot,y)\ast L_s v$ is the only solution to~\eqref{wddw4} in the class of functions~$C(\overline{\R^2_+})\cap L^{\infty}(\overline{\R^2_+})$. As a consequence, the fact that~$w \in C(\overline{\R^2_+})\cap L^{\infty}(\overline{\R^2_+})$ solves~\eqref{wddw4} shows~\eqref{sd09} and completes the proof.
\end{proof}

\begin{rem}\label{ifkt}
{\rm The limit in~\eqref{ceqh} can also be deduced from~\eqref{0ffogf} in Proposition~\ref{h08}. Hence, combining together these two limits gives an explicit formula for~$p_s$ as }
\[ p_s = \frac{q_s}{2sd_s}= \frac{(1-s)}{\sqrt{\pi}}\frac{\Gamma\big(\frac{1+2s}2\big)\Gamma (1-2s )}{\Gamma(2-s)\Gamma(s)}= \frac{\Gamma\big(\frac{1+2s}2\big)\Gamma (1-2s )}{\sqrt{\pi}\Gamma(1-s)\Gamma(s)} . \]
\end{rem}

\begin{prop}\label{uvr}
Let~$v \in C^{2,\beta}(\R) $ for some~$\beta\in(0,1)$. 
Then, there exists~$C>0$, depending on~$s$, such that 
\begin{equation}\label{5tf}
\int_0^{+\infty}y^{1-2s} |\nabla \bar{u}(x,y)|^2\, dy <C
\big(\Vert v\Vert_{L^{\infty}(\R)}^2+\Vert v'\Vert_{L^{\infty}(\R)}^2+\Vert v''\Vert_{L^{\infty}(\R)}^2 \big)
.
\end{equation}

In addition,
\begin{equation}\label{8543874jhfdsnbbsa43ewdc65tyhg7uyjk}
{\mbox{the integral in~\eqref{5tf} can be differentiated in~$x\in\R$ under the integral sign.}}\end{equation}

Furthermore, assume that
\begin{equation}\label{okr9}
\lim_{x\to\pm\infty} v(x)= L^{\pm}. %\qquad\mbox{and}\qquad \lim_{x\to \pm \infty}v'(x)=0.
\end{equation}
Then,
\begin{equation}\label{5e4}
\lim_{|x|\to +\infty} \int_0^{+\infty}y^{1-2s} |\nabla \bar{u}(x,y)|^2\, dy =0.
\end{equation}
\end{prop}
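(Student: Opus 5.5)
The plan is to work directly with the representation in~\eqref{dyd5}, namely $\bar u(x,y)=\int_\R H_s(\xi)v(x-y\xi)\,d\xi$, and to estimate $|\nabla\bar u|$ separately in the two regimes $y\in(0,1)$ and $y\ge1$.

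\textbf{Bound~\eqref{5tf}.}
\begin{itemize}
\item For $y\ge1$, use~\eqref{67d8}: $|\nabla\bar u|\le C_2\|v\|_{L^\infty}/y$. Then $y^{1-2s}|\nabla\bar u|^2\le C\|v\|^2_{L^\infty}y^{-1-2s}$, which is integrable on $(1,+\infty)$.
\item For $y\in(0,1)$, the $x$-derivative is harmless: $|\partial_x\bar u|\le\|v'\|_{L^\infty}$ by~\eqref{bnvmc43o09i75t9804398ygtijhgro}.
\item Also for $y\in(0,1)$, handle the $y$-derivative through $w=y^{1-2s}\partial_y\bar u$. By~\eqref{AGGBOUNED00}, $|\partial_y\bar u|\le C_3(\|v\|_{L^\infty}+\|v''\|_{L^\infty})\,y^{2s-1}$. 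Hence $y^{1-2s}|\partial_y\bar u|^2\le C(\dots)^2y^{2s-1}$, which is integrable near $0$ since $2s-1>-1$.
\end{itemize}
Summing the two regions gives~\eqref{5tf} with a constant depending only on $s$.

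\textbf{Differentiation under the integral,~\eqref{8543874jhfdsnbbsa43ewdc65tyhg7uyjk}.} I would find an $x$-independent integrable majorant for $\partial_x\big(y^{1-2s}|\nabla\bar u|^2\big)=2y^{1-2s}\nabla\bar u\cdot\nabla\partial_x\bar u$.
\begin{itemize}
\item Note that $\partial_x\bar u$ is the extension of $v'$, which lies in $C^{1,\beta}$ but not necessarily $C^{2,\beta}$. So I cannot simply apply the previous bound to $v'$.
\item For $y\ge1$: integrating by parts twice in $\xi$ as in~\eqref{0oiuy}--\eqref{65edfg} gives $|\nabla\partial_x\bar u|\le C\|v\|_{L^\infty}y^{-2}$. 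Since $H_s''$ and $\xi H_s''$ are integrable, the product is bounded by $Cy^{-2-2s}$, which is integrable.
\item For $y<1$: use $|\partial_{xx}\bar u|\le\|v''\|_{L^\infty}$. For $\partial_{xy}\bar u$, mimic the proof of~\eqref{mpoe} with $v'$ in place of $v$, using only the $C^{1,\beta}$ regularity of $v'$ (the Taylor step with a Hölder remainder). This should give $|y^{1-2s}\partial_{xy}\bar u|\le Cy^{\beta-1}$, or at least some $y^{-\gamma}$ with $\gamma<1$.
\item Combined with $|\partial_y\bar u|\le Cy^{2s-1}$, the $y\partial_y$ cross term has majorant $y^{1-2s}\,y^{2s-1}\,y^{2s-1+\beta-1}$. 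This is integrable provided $2s+\beta>1$.
\end{itemize}
This last step is the main obstacle: the exponents may fail to be integrable when $2s+\beta\le1$. If so, I would instead write the derivative of the integral as the limit of difference quotients, and use uniform convergence on $y\in(\epsilon,\infty)$ together with uniform smallness of $\int_0^\epsilon$ (from the bounds above). This proves differentiability directly, via uniform convergence of the derivatives of truncated integrals.

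\textbf{Limit~\eqref{5e4}.} Fix $\epsilon>0$ and choose $R_1$ large and $\delta$ small so that the contributions of $y>R_1$ and $y<\delta$ are below $\epsilon$, uniformly in $x$, by the bounds above. On $[\delta,R_1]$ use the integrated-by-parts formulas~\eqref{0oiuy}--\eqref{65edfg}. Since $\int_\R H_s'=0$ and $\int_\R(H_s+\xi H_s')=0$, I may replace $v(x-y\xi)$ by $v(x-y\xi)-L^{\pm}$. By~\eqref{okr9} and dominated convergence, this tends to zero as $x\to\pm\infty$ uniformly for $y\in[\delta,R_1]$, since $y$ is bounded below. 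Therefore $\nabla\bar u\to0$ uniformly there, and~\eqref{5e4} follows.
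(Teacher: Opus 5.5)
Your arguments for \eqref{5tf}, for the majorant on $y\ge1$, and for \eqref{5e4} are sound and essentially follow the paper. In \eqref{5e4} you subtract $L^\pm$ in \eqref{0oiuy}--\eqref{65edfg} rather than using $v'\to0$, which is a harmless variant.

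The gap is in \eqref{8543874jhfdsnbbsa43ewdc65tyhg7uyjk} for $y\in(0,1)$. Your bound $|y^{1-2s}\partial_{xy}\bar u|\le Cy^{\beta-1}$ gives an integrable majorant only when $2s+\beta>1$. The fallback does not cover the remaining case, under either reading:
\begin{itemize}
\item If ``uniform smallness of $\int_0^\epsilon$'' refers to $\int_0^\epsilon y^{1-2s}|\nabla\bar u|^2\,dy$, it only gives uniform convergence of the truncated integrals $F_\epsilon(x):=\int_\epsilon^{+\infty}y^{1-2s}|\nabla\bar u(x,y)|^2\,dy$. A uniform limit of differentiable functions need not be differentiable.
\item To pass to the limit in $F_\epsilon'$ you need $\int_0^\epsilon\partial_x\big(y^{1-2s}|\nabla\bar u|^2\big)\,dy\to0$ uniformly in $x$. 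That is exactly the estimate you could not prove.
\end{itemize}
So, as written, the argument is circular when $2s+\beta\le1$.

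The obstacle is an artifact of a lossy estimate. You do not need a $w$-type bound for the extension of $v'$. Since $y^{1-2s}|\partial_y\bar u|=|w|$ is already bounded by \eqref{AGGBOUNED00}, it suffices that $\partial_{xy}\bar u$ be integrable near $y=0$, uniformly in $x$. Differentiating \eqref{65edfg} in $x$ and using $\int_\R(H_s+\xi H_s')\,d\xi=0$ gives
\[
\partial_{xy}\bar u(x,y)=-\frac1y\int_\R\big(H_s(\xi)+\xi H_s'(\xi)\big)\big(v'(x-y\xi)-v'(x)\big)\,d\xi .
\]
Split at $|\xi|=1/y$ and use three facts:
\begin{itemize}
\item $|v'(x-y\xi)-v'(x)|\le\|v''\|_{L^\infty(\R)}\,y|\xi|$ on $\{|\xi|\le1/y\}$;
\item $|v'(x-y\xi)-v'(x)|\le 2\|v'\|_{L^\infty(\R)}$ on $\{|\xi|\ge1/y\}$;
\item $|H_s+\xi H_s'|\le C|\xi|^{-1-2s}$ for $|\xi|\ge1$.
\end{itemize}
This yields
\[
|\partial_{xy}\bar u(x,y)|\le C\big(\|v'\|_{L^\infty(\R)}+\|v''\|_{L^\infty(\R)}\big)\big(1+y^{2s-1}+|\ln y|\big).
\]
Hence $2y^{1-2s}|\partial_y\bar u\,\partial_{xy}\bar u|\le 2|w|\,|\partial_{xy}\bar u|\le C(1+y^{2s-1}+|\ln y|)$, which lies in $L^1((0,1))$ for every $s\in(0,1)$. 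No use of the H\"older seminorm of $v''$ is needed. This is the paper's $I_{\rm near}$/$I_{\rm far}$ argument.

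Even your own plan works if carried out carefully. Redoing \eqref{mpoe} with $v'$ in place of $v$, using only a first-order Taylor step, gives $|y^{1-2s}\partial_{xy}\bar u|\le C(1+y^{1-2s}+|\ln y|)$. This is far better than $Cy^{\beta-1}$ and equally sufficient.
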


\begin{proof}
We check that 
\begin{equation}\label{fi7}
\int_0^{+\infty} y^{1-2s} |\partial_x\bar{u}(x,y)|^2\, dy\le C \big(\Vert v\Vert_{L^{\infty}(\R)}^2+\Vert v'\Vert_{L^{\infty}(\R)}^2\big).
\end{equation}
Indeed, we use~\eqref{rf5} (recall, in particular, \eqref{bnvmc43o09i75t9804398ygtijhgro}) and~\eqref{67d8} to see that
\begin{equation}\label{pvdt}
\begin{split}&
\int_0^{+\infty} y^{1-2s} |\partial_x\bar{u}(x,y)|^2\, dy=
\int_0^{1} y^{1-2s} |\partial_x\bar{u}(x,y)|^2\, dy+\int_1^{+\infty} y^{1-2s} |\partial_x\bar{u}(x,y)|^2\, dy
\\&\qquad\le C \big(\Vert v\Vert_{L^{\infty}(\R)}^2+\Vert v'\Vert_{L^{\infty}(\R)}^2\big)
\int_0^{1} y^{1-2s}\, dy
+ C  \Vert v \Vert^2_{L^{\infty}(\R)} \int_1^{+\infty} y^{-1-2s} \, dy \\&\qquad
\le C \big(\Vert v\Vert_{L^{\infty}(\R)}^2+\Vert v'\Vert_{L^{\infty}(\R)}^2\big) ,
\end{split}
\end{equation}
thus completing the proof of~\eqref{fi7}.

Moreover, we show that
\begin{equation}\label{ojht8876y5t4jythbgrfe}
\int_0^{+\infty} y^{1-2s} |\partial_y\bar{u}(x,y)|^2\, dy\leq C\big(\Vert v\Vert_{L^{\infty}(\R)}^2+\Vert v''\Vert_{L^{\infty}(\R)}^2\big).
\end{equation}
Indeed,
from the bound in~\eqref{AGGBOUNED00} we deduce that
\begin{equation}\label{lkjl}\begin{split}&
y^{1-2s}|\partial_y\bar{u}(x,y)|^2 = y^{2s-1} |y^{1-2s}\partial_y\bar{u}(x,y)|^2 \leq \Vert w\Vert^2_{L^{\infty}(\R^2_+)} y^{2s-1}\\&\qquad
\le  C\big(\Vert v\Vert_{L^{\infty}(\R)}^2+\Vert v''\Vert_{L^{\infty}(\R)}^2\big)y^{2s-1}
.\end{split}
\end{equation}
Thus, using~\eqref{67d8} again,
\begin{equation*}
\begin{split}
\int_0^{+\infty} y^{1-2s} |\partial_y\bar{u}(x,y)|^2\, dt& \leq C
\big(\Vert v\Vert_{L^{\infty}(\R)}^2+\Vert v''\Vert_{L^{\infty}(\R)}^2\big)\int_0^1 y^{2s-1}\,dy+
C\Vert v\Vert_{L^{\infty}(\R)}
\int_1^{+\infty} y^{-1-2s}\, dy\\& \leq C\big(\Vert v\Vert_{L^{\infty}(\R)}^2+\Vert v''\Vert_{L^{\infty}(\R)}^2\big),
\end{split}
\end{equation*} which is~\eqref{ojht8876y5t4jythbgrfe}.

{F}rom~\eqref{fi7} and~\eqref{ojht8876y5t4jythbgrfe} we thus deduce~\eqref{5tf}.

Next, we address the claim in~\eqref{8543874jhfdsnbbsa43ewdc65tyhg7uyjk}. To ease the reading, we use the notation~$f(x,y):=y^{1-2s} |\nabla\bar{u}(x,y)|^2$.

We claim that there exists~$g_1 \in L^1((0,1))$ such that
\begin{equation}\label{ift7}
 \big|\partial_x f(x,y)   \big| \leq g_1(y) \quad\mbox{for any $ x \in \R$ and $y\in(0,1)$.}
\end{equation}
The proof of this claim is rather long and technical, and is therefore postponed at the end of the argument. 

We now check that there exists~$g_2 \in L^1((1,+\infty))$ satisfying
\begin{equation}\label{ifte7}
 \big|\partial_x f(x,y)   \big| \leq g_2(y) \quad\mbox{for any $x \in \R$ and $ y\in(1,+\infty)$.}
\end{equation}
Indeed, using~\eqref{67d8},
\begin{equation}\label{UJUJHG9087654312345qwerasdzxcvb}
\begin{split}&
|\partial_x f(x,y) | =| 2 y^{1-2s} \nabla\bar{u}(x,y) \cdot \nabla\partial_x \bar{u} (x,y)| \leq
 2 y^{1-2s} |\nabla\bar{u}(x,y)|\,| \nabla \partial_x\bar{u}(x,y) | \\ &\qquad\leq
C\Vert v \Vert_{L^\infty(\R)} y^{-2s}| \nabla \partial_x\bar{u} (x,y)| .
\end{split}
\end{equation}
Furthermore, we recall~\eqref{0oiuy} and~\eqref{65edfg} and we see that
\begin{eqnarray*}
|\partial_{xx}\bar{u}(x,y) |= \left|
\frac1y\int_\R H'_s(\xi)  v'(x-y\xi)\, d\xi \right|\le \frac{C\|v'\|_{L^\infty(\R)}}{y}
\end{eqnarray*}
and
\begin{eqnarray*}
|\partial_{xy}\bar{u}(x,y)| = \left| -
 \frac1y\int_\R \left(H_s(\xi)+\xi H'_s(\xi)\right)  v'(x-y\xi)\, d\xi\right| \le \frac{C\|v'\|_{L^\infty(\R)}}{y}
\end{eqnarray*}
{F}rom these estimates and~\eqref{UJUJHG9087654312345qwerasdzxcvb} we deduce~\eqref{ifte7} with
$$ g_2(x,y):=\frac{C\Vert v \Vert_{L^\infty(\R)} \|v'\|_{L^\infty(\R)}}{y^{1+2s}}
.$$

As a consequence of~\eqref{ift7} and~\eqref{ifte7}, we have that the function~$g:=\chi_{(0,1)}g_1+
\chi_{(1,+\infty)}g_2$ belongs to~$ L^1((0,+\infty))$ and~$|\partial_x f(x,y) | \leq g(y)$ for any~$y>0$. 
This entails the claim in~\eqref{8543874jhfdsnbbsa43ewdc65tyhg7uyjk}.

We now establish~\eqref{5e4}. To this end, we point out that the function~$y^{1-2s}|\nabla\bar{u}|^2$ is bounded uniformly in~$x$
by a function in~$L^1((0+\infty))$ (recall the computations in~\eqref{pvdt} and~\eqref{lkjl}), and therefore
the Dominated Convergence Theorem applies and we obtain
\begin{equation}\label{of0}
\lim_{x\to\pm \infty} \int_0^{+\infty}y^{1-2s} |\nabla \bar{u}(x,y)|^2\, dy = \int_0^{+\infty}y^{1-2s}\lim_{x\to\pm \infty} |\nabla \bar{u}(x,y)|^2\, dy .
\end{equation}

Next, we aim to show that
\begin{equation}\label{9y7}
\lim_{x\to\pm \infty}| \nabla\bar{u}(x,y)|= 0 \quad\mbox{for any } y>0.
\end{equation}
To this end, we first observe that, in light of~\eqref{okr9},
$$\lim_{x\to\pm\infty}v'(x)=0.$$
As a result, we recall~\eqref{0oiuy} and we use the Dominated Convergence Theorem to see that
$$\lim_{x\to\pm \infty}| \partial_x\bar{u} (x,y)| =
\lim_{x\to\pm \infty} 
\left|\int_\R H_s(\xi) v'(x-y\xi)\, d\xi \right|=0.
$$

Similarly, but using~\eqref{65edfg} and~\eqref{okr9}, we also find that
\begin{equation*}
\begin{split}
\lim_{x\to\pm \infty}| \partial_y\bar{u} (x,y)| =&\lim_{x\to\pm \infty}
\left| -\frac1y\int_\R \left(H_s(\xi)+\xi H'_s(\xi)\right)  v(x-y\xi)\, d\xi\right|\\&=
\frac1y \left| \int_\R \left(H_s(\xi)+\xi H'_s(\xi)\right)  \lim_{x\to\pm \infty} v(x-y\xi)\, d\xi\right|\\&=
\frac{|L^{\pm}|}{y}\left| \int_\R \left(H_s(\xi)+\xi H'_s(\xi)\right)   d\xi\right|\\&=
\frac{|L^{\pm}|}{y}\left| \lim_{\xi\to+\infty}\xi H_s(\xi)-\lim_{\xi\to-\infty}\xi H_s(\xi)\right|=0.
\end{split}
\end{equation*} 

These considerations show~\eqref{9y7} holds. 

From~\eqref{of0} and~\eqref{9y7} we deduce the desired limit in~\eqref{5e4}.

Hence, to complete the proof of Proposition~\ref{uvr}
it only remains to check the claim in~\eqref{ift7}. For this,
let~$x\in\R$ and define, for any~$y\in(0,1)$,
\begin{align*}
I_{\rm near}(y)&:=  \frac1y \int_{-\frac1y}^{\frac1y} \big( H_s(\xi) + \xi H_s'(\xi) \big)(v'(x-y\xi)-v'(x)) \, d\xi \\{\mbox{and }}\quad
I_{\rm far}(y)&:=  \frac1y \int_{\{|\xi|\geq \frac1y\}} \big( H_s(\xi)  + \xi H_s'(\xi) \big)(v'(x-y\xi)-v'(x)) \, d\xi.
\end{align*}

We observe that, for any $|\xi|\geq 1$,
\[ \big|H_s(\xi)+\xi H_s'(\xi) \big|= \frac{p_s}{(1+|\xi|^2)^{\frac{1+2s}2}}\left|1- \frac{(1+2s)|\xi|^2}{1+|\xi|^2}  \right| \leq \frac{C}{ |\xi|^{1+2s}}.\]
Therefore, by a Taylor expansion,
\begin{equation} \label{belf}\begin{split}
\big| I_{\rm near}(y)\big| &\leq 2 \Vert v''\Vert_{L^{\infty}(\R)} \int_{0}^\frac1y\big| H_s(\xi) +\xi H_s'(\xi) \big| \xi \, d\xi \\&\leq 
 2 \Vert v''\Vert_{L^{\infty}(\R)}\left( \int_{0}^1\big|H_s(\xi) +\xi H_s'(\xi) \big| \xi \, d\xi + C \int_{1}^\frac1y\frac{d\xi}{\xi^{2s}} \right) \\&\leq
C \Vert v''\Vert_{L^{\infty}(\R)} \begin{cases}
1-\ln y &\mbox{if } s=\frac12,\\
1+y^{2s-1} &\mbox{if } s\neq\frac12.
\end{cases}
\end{split}
\end{equation}

Furthermore,
\begin{equation}\label{3586}\begin{split}
\big| I_{\rm far}(y)\big| &\leq \frac{4 \Vert v'\Vert_{L^{\infty}(\R)}}y \int_{\frac1y}^{+\infty} \big|H_s(\xi)  + \xi H_s'(\xi) \big| \, d\xi  \\&\leq
\frac{C \Vert v'\Vert_{L^{\infty}(\R)}}y \int_{\frac1y}^{+\infty}\frac{d\xi}{ |\xi|^{1+2s}}   \\&\leq 
C \Vert v'\Vert_{L^{\infty}(\R)}y^{2s-1}. 
\end{split}\end{equation}

Moreover, we observe that
\begin{equation*}
v'(x) \int_\R \big(H_s (\xi) +\xi H_s'(\xi) \big) \, d\xi = v'(x) \int_\R \frac{d }{d\xi}(\xi H_s(\xi)) \, d\xi=0.
\end{equation*} Consequently,
recalling~\eqref{65edfg},
\begin{equation*}
\begin{split}
|\partial_{yx}\bar{u}(x,y) |&= \frac1y \left|\int_{\R}\big(H_s(\xi)+\xi H_s'(\xi)\big) (v'(x-y\xi) -v'(x)) \, d\xi \right| \leq \big| I_{\rm near}(y)\big| +\big| I_{\rm far}(y)\big|.
\end{split}
\end{equation*}

Combining this with~\eqref{belf} and~\eqref{3586}, we conclude that
\begin{equation*}
|\partial_{yx}\bar{u}(x,t) | \leq C \big(\Vert v'\Vert_{L^{\infty}(\R)}+ \Vert v''\Vert_{L^{\infty}(\R)}
\big)\big(1+ y^{2s-1}-\ln y \big) ,
\end{equation*}
for some positive~$C$ depending on~$s$.

As a consequence, using this estimate together with the uniform bound in~\eqref{AGGBOUNED00}, we find that, for any $y\in (0,1)$,
\begin{equation}\label{pou}
 y^{1-2s} | \partial_y\bar{u}(x,y)  \partial_{yx}\bar{u}(x,y) | \leq C  \big(\Vert v\Vert_{L^{\infty}(\R)}+\Vert v'\Vert_{L^{\infty}(\R)}+ \Vert v''\Vert_{L^{\infty}(\R)}\big)^2\big(1+ y^{2s-1}-\ln y \big) 
.
\end{equation}

Furthermore, the bound on the $x$-derivatives of $\bar{u}$ in~\eqref{rf5} (recall in particular~\eqref{bnvmc43o09i75t9804398ygtijhgro}) yields
\begin{equation}\label{epou}
 y^{1-2s}| \partial_x\bar{u}(x,t) \partial_{xx}\bar{u}(x,t)| \leq
C  \Vert v'\Vert_{L^{\infty}(\R)} \Vert v''\Vert_{L^{\infty}(\R)}  y^{1-2s} 
.\end{equation}

Thus, since
\begin{equation*}
\begin{split}
|\partial_x f(x,y)& |= 2 y^{1-2s} | \nabla\bar{u}(x,y) \cdot \nabla \bar{u}_x (x,y)|\\&=
  2 y^{1-2s} \big| \partial_x\bar{u}(x,y) \partial_{xx}\bar{u}(x,y) +  \partial_y\bar{u}(x,y)  \partial_{yx}\bar{u}(x,y) \big|,
\end{split}
\end{equation*} using~\eqref{pou} and~\eqref{epou}
we conclude the proof of~\eqref{ift7}, and therefore of Proposition~\ref{uvr}.
\end{proof}

Now, we exploit the results obtained on~$\bar{u}$ to retrieve the double-well nature of the potential~$G$ such that~$G'=f$.

Propositions~\ref{p09} and~\ref{poas} below can be regarded as the counterparts of Lemmata~5.2 and~5.3 in~\cite{CS14} within our setting, where~$f$ is no more than continuous and~$v\in C^{2,\beta}(\R)$.

\begin{prop}\label{p09}
Let~$v \in C^{2,\beta}(\R)$ for some~$\beta\in(0,1)$ and~$f \in C (\R)$ be such that
\begin{equation*}\label{ekuli}
L_s v = f(v)\quad\mbox{in} \ \R.
\end{equation*}
Furthermore, assume that
\begin{equation*}\label{osdkr9}
\lim_{x\to\pm\infty} v(x)= L^{\pm}.% \quad\mbox{and}\quad \lim_{x\to \pm \infty}v'(x)=0.
\end{equation*}

Then, the potential~$G$ such that~$G'=f$ satisfies
\begin{equation*}
G(L^+)= G(L^-).
\end{equation*}
\end{prop}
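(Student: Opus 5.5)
We follow the Hamiltonian identity approach of \cite[Lemma~5.2]{CS14}, working with the extension $\bar u$ defined in~\eqref{346ygf}. By Proposition~\ref{h08}, $\bar u$ solves ${\rm div}(y^{1-2s}\nabla\bar u)=0$ in $\R^2_+$. By Proposition~\ref{pi9e}, in particular~\eqref{ceqh}, we have $\lim_{y\to0^+}y^{1-2s}\partial_y\bar u(x,y)=2sp_s L_s v(x)=2sp_s f(v(x))$. For every $x\in\R$ we define the Hamiltonian-type quantity
\[
\mathcal H(x):=\frac12\int_0^{+\infty}y^{1-2s}\big(|\partial_x\bar u(x,y)|^2-|\partial_y\bar u(x,y)|^2\big)\,dy+2sp_s\,G(v(x)).
\]
Proposition~\ref{uvr} guarantees that the integral is finite. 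Also, by~\eqref{8543874jhfdsnbbsa43ewdc65tyhg7uyjk}, it can be differentiated in $x$ under the integral sign. The strategy is to show that $\mathcal H$ is constant and then compare its limits as $x\to\pm\infty$.

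First step: we compute $\mathcal H'(x)$. Differentiating under the integral gives
\[
\int_0^{+\infty}y^{1-2s}\big(\partial_x\bar u\,\partial_{xx}\bar u-\partial_y\bar u\,\partial_{xy}\bar u\big)\,dy+2sp_s f(v)v'.
\]
The equation yields $y^{1-2s}\partial_{xx}\bar u=-\partial_y(y^{1-2s}\partial_y\bar u)$. Hence the integrand equals $-\partial_y\big(\partial_x\bar u\; y^{1-2s}\partial_y\bar u\big)$. Integrating in $y\in(\epsilon,R)$ leaves only boundary terms. At $y=R\to+\infty$, the term $\partial_x\bar u\,y^{1-2s}\partial_y\bar u$ vanishes, since it is $O(R^{-1-2s})$ by~\eqref{67d8}. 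At $y=\epsilon\to0^+$, it converges to $v'(x)\,2sp_s f(v(x))$ by~\eqref{ceqh}, using also the continuity of $\partial_x\bar u$ up to the boundary from~\eqref{rf5}, with $\partial_x\bar u(x,0)=v'(x)$. The integral therefore contributes $-2sp_s f(v)v'$, which cancels the second term, so $\mathcal H'\equiv0$.

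Second step: we let $x\to\pm\infty$. By~\eqref{5e4} the integral term tends to $0$. Since $G$ is continuous (indeed $C^1$) and $v(x)\to L^\pm$, we get $\mathcal H(\pm\infty)=2sp_sG(L^\pm)$. Since $p_s>0$, constancy of $\mathcal H$ gives $G(L^+)=G(L^-)$.

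The main obstacle is the rigorous justification of the integration by parts in $y$ near $y=0$. There $y^{1-2s}\partial_y\bar u$ and the second derivatives may be singular, for instance $\partial_{xy}\bar u\sim y^{2s-1}$. To handle it, I would integrate on $(\epsilon,R)$ and use the uniform bound~\eqref{AGGBOUNED00} on $w$ together with the convergence~\eqref{ceqh}, which hold for each fixed $x$. The bounds in~\eqref{pou}–\eqref{epou} provide integrability of the derivative integrand near $0$, which justifies passing $\epsilon\to0$ by dominated convergence. The interior smoothness of $\bar u$ in $\R^2_+$ legitimizes the pointwise computation for $y>0$.
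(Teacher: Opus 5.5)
Your proposal is the argument the paper has in mind. The paper only says that one runs the Hamiltonian identity of \cite[Lemma~5.2]{CS14} using Proposition~\ref{h08} and the facts \eqref{5tf}, \eqref{8543874jhfdsnbbsa43ewdc65tyhg7uyjk}, \eqref{5e4} of Proposition~\ref{uvr}. Your treatment of the boundary terms on $(\epsilon,R)$, via \eqref{67d8}, \eqref{rf5}, \eqref{ceqh} and the bounds \eqref{pou}--\eqref{epou}, is a sound way to supply those details.

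There is, however, a sign slip in your first step. With $F:=\partial_x\bar u\,y^{1-2s}\partial_y\bar u$ you have
\[
\int_\epsilon^R\big(-\partial_yF\big)\,dy=F(\epsilon)-F(R)\longrightarrow 2sp_s\,f(v(x))\,v'(x).
\]
So the integral term contributes $+2sp_sf(v)v'$, not $-2sp_sf(v)v'$. With your definition you get $\mathcal H'=4sp_sf(v)v'$, which is not zero in general. The conserved quantity is
\[
\frac12\int_0^{+\infty}y^{1-2s}\big(|\partial_x\bar u(x,y)|^2-|\partial_y\bar u(x,y)|^2\big)\,dy-2sp_s\,G(v(x)).
\]
This is also the normalization behind Proposition~\ref{poas}, since $d_s/q_s=1/(2sp_s)$ by Remark~\ref{ifkt}. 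After this correction your second step goes through verbatim: letting $x\to\pm\infty$ and using \eqref{5e4} gives $-2sp_sG(L^+)=-2sp_sG(L^-)$, and the claim follows since $p_s>0$.
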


\begin{proof}
The proof follows the same structure as in~\cite[Lemma~5.2]{CS14}, making use of 
\begin{itemize}
\item Proposition~\ref{h08}, which relates equation~\eqref{8gps} to the extension problem~\eqref{0ffogf};
\item Proposition~\ref{uvr}, which provides the estimates for
$$\int_0^{+\infty} y^{1-2s} |\nabla \bar{u}|^2\, dy,$$ in particular~\eqref{5tf}, \eqref{8543874jhfdsnbbsa43ewdc65tyhg7uyjk} and~\eqref{5e4}.\qedhere
\end{itemize}
\end{proof}

\begin{prop}\label{poas}
Let~$v \in C^{2,\beta}(\R)$ for some~$\beta\in(0,1)$ and~$f \in C (\R)$ be such that
\begin{equation*}\label{ekul}
L_s v = f(v)\quad\mbox{in} \ \R.
\end{equation*}
Furthermore, assume that
\begin{equation*}\label{okr9g}
\lim_{x\to\pm\infty} v(x)= L^{\pm} \quad\mbox{and}\quad v' >0 \ \mbox{in } \R.
\end{equation*}

Then, the potential~$G$ such that~$G'=f$ satisfies, for any~$x \in \R$ and~$y\geq0$,
\begin{equation}\label{yuo}
\frac{d_s}{q_s}\int_0^y \frac{t^{1-2s}}{2}\,\big( \bar{u}_x^2(x,t)-\bar{u}_y^2(x,t) \big)\,dt
< G(\bar{u}(x,0)) - G(L^+).
\end{equation}
\end{prop}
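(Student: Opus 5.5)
This is the counterpart of \cite[Lemma~5.3]{CS14}. I would follow the Hamiltonian-identity argument used there, replacing the regularity of $f$ by the regularity of $v$ through Propositions~\ref{h08}, \ref{pi9e} and~\ref{uvr}.

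\textbf{Step 1: the non-strict identity.} Fix $y>0$ and consider the function
\[
E(x):=\frac{d_s}{q_s}\int_0^y \frac{t^{1-2s}}{2}\big(\bar u_x^2-\bar u_y^2\big)(x,t)\,dt \;-\; G(\bar u(x,0)).
\]
I would differentiate in $x$. Differentiation under the integral sign is justified by \eqref{8543874jhfdsnbbsa43ewdc65tyhg7uyjk}, since the same domination holds on $(0,y)$. This gives
\[
\int_0^y t^{1-2s}\big(\bar u_x\bar u_{xx}-\bar u_y\bar u_{xy}\big)\,dt .
\]
Using the equation ${\rm div}(t^{1-2s}\nabla\bar u)=0$, we can write $(t^{1-2s}\bar u_x)_x=-(t^{1-2s}\bar u_y)_t$. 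Integrating by parts in $t$ turns the integrand into $-\partial_t\big(t^{1-2s}\bar u_y\bar u_x\big)$. Hence the derivative equals
\[
-\,y^{1-2s}\bar u_y(x,y)\,\bar u_x(x,y)\;+\;\lim_{t\to0^+}t^{1-2s}\bar u_y\bar u_x .
\]
By \eqref{0ffogf}, the boundary limit equals $\frac{q_s}{d_s}f(v(x))\,v'(x)$. Multiplying by $\frac{d_s}{q_s}$, this cancels exactly against $\frac{d}{dx}G(v(x))=f(v)v'$. The boundedness of $w$ in \eqref{AGGBOUNED00} and the limit \eqref{ceqh} make the boundary term legitimate. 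Consequently
\[
E'(x)=-\frac{d_s}{q_s}\,y^{1-2s}\,\bar u_x(x,y)\,\bar u_y(x,y).
\]

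\textbf{Step 2: sign and limits.} Next I would determine the sign of $\bar u_x\bar u_y$. Since $v'>0$, the formula $\bar u_x=\int H_s(\xi)v'(x-y\xi)\,d\xi$ gives $\bar u_x>0$. For $\bar u_y$, I would argue as in \cite{CS14}: $w=y^{1-2s}\bar u_y$ solves \eqref{wddw4}, and the sign of $d_s/q_s$ must be tracked. Rather than fixing the sign of $\bar u_y$ pointwise, I would integrate $E'$ from $x$ to $+\infty$ and use
\[
\lim_{x\to+\infty}E(x)=-G(L^+),
\]
which follows from \eqref{5e4} and the continuity of $G$. The key difficulty is then that $\int_x^{\infty}y^{1-2s}\bar u_x\bar u_y$ has a sign.

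A cleaner route, which I would try first, is the one of \cite{CS14}. Define
\[
F(y):=\frac{d_s}{q_s}\int_0^y \frac{t^{1-2s}}{2}\big(\bar u_x^2-\bar u_y^2\big)(x,t)\,dt-G(\bar u(x,0))
\]
for fixed $x$. Step 1 gives $E\equiv$ const in $x$ when $y=+\infty$, and Proposition~\ref{p09} combined with \eqref{5e4} shows that this constant is $-G(L^+)=-G(L^-)$. So \eqref{yuo} with ``$\le$'' at $y=+\infty$ becomes an identity. For finite $y$, I would compare $F(y)$ with $F(+\infty)$ via
\[
\partial_y F(x,y)=\frac{d_s}{q_s}\,\frac{y^{1-2s}}{2}\big(\bar u_x^2-\bar u_y^2\big).
\]
Then I would use the identity
\[
\partial_x\Big(\int_y^\infty t^{1-2s}\tfrac12\big(\bar u_x^2-\bar u_y^2\big)\,dt\Big)=y^{1-2s}\bar u_x\bar u_y
\]
(the Step 1 computation on $(y,\infty)$), and integrate in $x$ from $-\infty$. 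This expresses $\int_y^\infty t^{1-2s}\tfrac12(\bar u_x^2-\bar u_y^2)\,dt$ as $\int_{-\infty}^x y^{1-2s}\bar u_x\bar u_y\,dx'$. The strict sign then comes from $\bar u_x>0$ together with a definite sign of $\bar u_y$. Equivalently, $\partial_x\bar u(x',y)$ is strictly positive and $\bar u(\cdot,y)$ is monotone, so integrating $\bar u_y\,\bar u_x\,dx'$ gives a sign after an integration by parts. This step yields the strict inequality.

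\textbf{Main obstacle.} I expect the main obstacle to be establishing this strict sign, together with justifying the boundary and infinity terms under only $f\in C(\R)$. For the latter, the bounds \eqref{67d8}, \eqref{AGGBOUNED00} and \eqref{5tf} should suffice, along with dominated convergence as in the proof of \eqref{5e4}.
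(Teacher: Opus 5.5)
Your Step~1 is correct. The formula $E'(x)=-\frac{d_s}{q_s}y^{1-2s}\bar u_x\bar u_y(x,y)$, together with \eqref{67d8}, \eqref{5e4} and Proposition~\ref{p09}, gives the Hamiltonian \emph{equality} at $y=+\infty$. It also gives $\int_y^{+\infty}\frac{t^{1-2s}}{2}(\bar u_x^2-\bar u_y^2)(x,t)\,dt=\int_{-\infty}^x y^{1-2s}\bar u_x\bar u_y(x',y)\,dx'$, so \eqref{yuo} is equivalent to the positivity of this last integral.

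The gap is Step~2, where you get that positivity from ``a definite sign of $\bar u_y$''. Note that at $y=0$, \eqref{yuo} is exactly the double-well property $G(v(x))>G(L^+)$, so this step carries all the content. The sign claim is false. By the same identity and \eqref{5e4}, $\int_\R y^{1-2s}\bar u_x\bar u_y(x',y)\,dx'=0$. Since $\bar u_x>0$, $\bar u_y(\cdot,y)$ must therefore change sign; a definite sign would force $\bar u_y(\cdot,y)\equiv0$, which gives equality, not strict inequality. Concretely, for $s=\frac12$ and $v=\frac2\pi\arctan$ one has $\bar u(x,y)=\frac2\pi\arctan\frac{x}{1+y}$ and $\bar u_y=-\frac2\pi\frac{x}{(1+y)^2+x^2}$.

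The integration by parts does not rescue this. Substituting $r=\bar u(x',y)$ turns the integral into $\int_{L^-}^{\bar u(x,y)}g_y(r)\,dr$, with $g_y:=\bigl(y^{1-2s}\bar u_y(\cdot,y)\bigr)\circ\bar u(\cdot,y)^{-1}$. This is the potential of the ``problem at height $y$'', and its positivity is the very statement you are after.

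The missing idea is a maximum-principle argument in both variables. The paper, following \cite[Lemma~5.3]{CS14}, closes the proof using $\partial_x\bar u>0$ and the Hopf boundary principle \cite[Proposition~4.11]{CS14}. One way to run it with your ingredients is as follows.

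Let $\Phi(x,y)$ be the left-hand side minus the right-hand side of \eqref{yuo}. Then $\partial_x\Phi=-\frac{d_s}{q_s}y^{1-2s}\bar u_x\bar u_y$ and $\partial_y\Phi=\frac{d_s}{2q_s}y^{1-2s}(\bar u_x^2-\bar u_y^2)$. Moreover $\Phi\to0$ at infinity in $\overline{\R^2_+}$, by the equality at $y=+\infty$, \eqref{67d8} and \eqref{5e4}. If $\Phi\ge0$ somewhere, it attains a nonnegative maximum at a finite point $(x_0,y_0)$.

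If $y_0>0$, then $\nabla\Phi=0$ forces $\bar u_x\bar u_y=0=\bar u_x^2-\bar u_y^2$, hence $\bar u_x=0$, which is impossible.

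If $y_0=0$, maximality of $\Phi(\cdot,0)=G(L^+)-G(v)$ gives $f(v(x_0))=0$, i.e.\ $w(x_0,0)=0$ by \eqref{ceqh}. You then need $t^{2s-1}w(x_0,t)^2<t^{1-2s}\bar u_x(x_0,t)^2$ for small $t>0$. This is immediate if $s\ge\frac12$. For $s<\frac12$ it requires $w(x_0,t)=o(t^{1-2s})$, which follows, e.g., from \eqref{sd09} and the H\"older continuity of $L_sv$. Hence $\Phi(x_0,\cdot)$ increases near $0$, a contradiction.

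This yields $\Phi<0$ everywhere, which is \eqref{yuo} with strict inequality, including the case $y=0$.
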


\begin{proof}
The proof follows the approach in~\cite[Lemma~5.3]{CS14}. 
Besides the ingredients used in Proposition~\ref{p09}, one also employs:
\begin{itemize}
\item the fact that~$\partial_x\bar{u}>0$, which is an immediate consequence of $v'>0$ and formula~\eqref{0oiuy};
\item the Hopf boundary principle, applied through~\cite[Proposition 4.11]{CS14}.\qedhere
\end{itemize}
\end{proof}

From Propositions~\ref{p09} and~\ref{poas} the following result on~$G$ is established:
\begin{corol}\label{finmi}
Let~$v \in C^{2,\beta}(\R)$ for some~$\beta\in(0,1)$ and~$f \in C (\R)$ be such that
\begin{equation*}\label{ekul}
L_s v = f(v)\quad\mbox{in} \ \R.
\end{equation*}
Furthermore, assume that
\begin{equation*}\label{okr9g}
\lim_{x\to\pm\infty} v(x)= L^{\pm} \quad\mbox{and}\quad v' >0 \ \mbox{in } \R.
\end{equation*}

Then, the potential~$G$ such that~$G'=f$ satisfies for any~$x \in \R$ 
\begin{equation*}
 G(\bar{u}(x,0)) > G(L^+)= G(L^-).
\end{equation*}
\end{corol}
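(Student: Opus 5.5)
The equality $G(L^+)=G(L^-)$ is exactly Proposition~\ref{p09}, whose hypotheses are contained in those of the corollary. So the only new content is the strict inequality $G(\bar u(x,0))>G(L^+)$ for every $x\in\R$. The plan is to read it off from~\eqref{yuo} in Proposition~\ref{poas} by choosing $y$ suitably.

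First, I note that $\bar u(x,0)=v(x)$, since the trace of $\bar u$ on $\partial\R^2_+$ is $v$ (Proposition~\ref{h08}, or directly from~\eqref{dyd5}, because $\int_\R H_s=1$ by~\eqref{bvcnxiwoytr843y8}). Next, I evaluate~\eqref{yuo} at $y=0$. The admissible range there is $y\ge0$, and for $y=0$ the integral on the left-hand side vanishes. This yields
\[
0<G(\bar u(x,0))-G(L^+)=G(v(x))-G(L^+),
\]
which is the desired strict inequality. Combining it with Proposition~\ref{p09} gives the full chain $G(\bar u(x,0))>G(L^+)=G(L^-)$.

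The only point needing care is that the strict inequality in~\eqref{yuo} really holds at $y=0$ and not merely for $y>0$. If one is uneasy about this endpoint, there is a fallback. The left-hand side of~\eqref{yuo} is continuous in $y$ and tends to $0$ as $y\to0^+$, because the integrand is in $L^1$ near $0$ by the bounds~\eqref{pvdt} and~\eqref{lkjl}. Passing to the limit, however, only gives $\ge$. In that case I would recover strictness from the source of strictness in Proposition~\ref{poas} itself, namely the Hopf principle together with $\partial_x\bar u>0$: these make the relevant auxiliary function strictly positive up to $y=0$. Since the statement of Proposition~\ref{poas} explicitly includes $y\ge0$, I expect the direct substitution to suffice, and this endpoint issue is the only real obstacle.
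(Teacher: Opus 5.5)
Your proposal is correct and follows the paper's proof: the equality $G(L^+)=G(L^-)$ comes from Proposition~\ref{p09}, and the strict inequality comes from~\eqref{yuo} in Proposition~\ref{poas} with $y=0$, where the integral vanishes. The endpoint fallback is not needed, since Proposition~\ref{poas} is stated for every $y\ge0$.
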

\begin{proof}
The desired results plainly follow from Propositions~\ref{p09} and~\ref{poas} (where~\eqref{yuo} is used here with~$y:=0$).
\end{proof}

\end{subsection}

\section{Proof of Theorem~\ref{minth} } \label{minth_proof}   
 
In this section we provide the proof of Theorem~\ref{minth}. 
We preliminarily offer some auxiliary results on~$V$. Then, the proof of Theorem~\ref{minth} is placed at the end of the section.

We will use the notation introduced in Section~\ref{n0biob}.

\begin{prop}\label{nuova}
It holds that~$V \in C^\infty((-1,1))$ and
\begin{equation}\label{vbcn845634897qqwsdfcrtg6tg}
V(r) > V(\pm1) = 0 \quad\mbox{for any } r \in (-1,1).
\end{equation}

Moreover, it holds that

\begin{align}
&\lim_{r \to -1^+} \frac{ V(r)}{(1+r)^{\frac{2s}\alpha+1}}=\frac{\alpha C_1^{-\frac{2s}{\alpha}}}{(2s+\alpha)s} \qquad\mbox{and}\qquad \lim_{r \to 1^-} \frac{ V(r)}{(1-r)^{\frac{2s}\beta+1}} =\frac{\beta C_2^{-\frac{2s}\beta}}{(2s+\beta)s},\label{kof}\\
&\lim_{r \to -1^+} \frac{ V\rq{}(r)}{(1+r)^{\frac{2s}\alpha }}=\frac{C_1^{-\frac{2s}{\alpha}}}{s} \qquad\mbox{and}\qquad \lim_{r \to 1^-} \frac{ V\rq{}(r)}{(1-r)^{\frac{2s}\beta}} =-\frac{C_2^{-\frac{2s}\beta}}{s}.\label{fd6}
\end{align}
\end{prop}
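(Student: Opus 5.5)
The plan is to read off all the properties of $V$ from the identity $V'(\phi(x))=g(x)=L_s\phi(x)$, using three earlier results: Proposition~\ref{thone} for the behaviour of $g$ at infinity, Corollary~\ref{finmi} for the double-well inequality, and the explicit power form \eqref{valesem} of $\phi$ outside $[-\kappa,\kappa]$ to convert decay in $x$ into decay in $r=\phi(x)$.

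\emph{Smoothness.} Since $\phi\in C^\infty(\R)$ and $\phi'>0$, $\phi$ is a smooth diffeomorphism of $\R$ onto $(-1,1)$. By \eqref{valesem} we have $\phi^{-1}\in C^\infty((-1,1))$ and $\phi^{(k)}\in L^\infty(\R)$ for every $k$. Hence $L_s$ commutes with derivatives: writing $L_s\phi(x)$ as $\int(\phi(x+z)+\phi(x-z)-2\phi(x))|z|^{-1-2s}\,dz/2$, we may differentiate under the integral sign by dominated convergence. This gives $g\in C^\infty(\R)$, so $h=g\circ\phi^{-1}\in C^\infty((-1,1))$, and therefore $V\in C^\infty((-1,1))$ by \eqref{pmes}.

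\emph{Asymptotics of $V'$ (proof of \eqref{fd6}).} For $r$ close to $-1$, put $x=\phi^{-1}(r)<-\kappa$. Then $1+r=C_1|x|^{-\alpha}$, so $|x|^{-2s}=C_1^{-2s/\alpha}(1+r)^{2s/\alpha}$. Proposition~\ref{thone} gives
\[
\frac{V'(r)}{(1+r)^{2s/\alpha}}=C_1^{-2s/\alpha}\,|x|^{2s}L_s\phi(x)\longrightarrow \frac{C_1^{-2s/\alpha}}{s}.
\]
The limit at $+1$ follows in the same way, using $1-r=C_2x^{-\beta}$ and the limit $-1/s$.

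\emph{Double-well property.} We apply Corollary~\ref{finmi} with $v:=\phi$. Since $\phi$ has bounded derivatives of all orders, $\phi\in C^{2,\beta'}(\R)$ for every $\beta'\in(0,1)$. For the nonlinearity, let $f:=h$ on $(-1,1)$ and $f:=0$ outside. This $f$ is continuous on $\R$, because \eqref{fd6} already shows $h(r)\to0$ as $r\to\pm1$. Moreover $V$ is a primitive of $f$ on $[-1,1]$, and $L_s\phi=f(\phi)$ on $\R$. The limits $L^\pm=\pm1$ hold and $\phi'>0$. The corollary then yields $V(\phi(x))>V(1)=V(-1)=0$ for all $x$. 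Since $\phi(\R)=(-1,1)$, this is \eqref{vbcn845634897qqwsdfcrtg6tg}.

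\emph{Asymptotics of $V$ (proof of \eqref{kof}).} Near $-1$ we have $V(r)=\int_{-1}^rV'$. Integrating the asymptotics \eqref{fd6} (equivalently, applying L'Hôpital's rule) gives the limit
\[
\frac{C_1^{-2s/\alpha}}{s\,(2s/\alpha+1)}=\frac{\alpha C_1^{-2s/\alpha}}{(2s+\alpha)s}.
\]
Near $+1$ we use the fact just proved that $V(1)=0$, so $V(r)=\int_r^1(-V'(\rho))\,d\rho$, with $-V'(\rho)\sim C_2^{-2s/\beta}(1-\rho)^{2s/\beta}/s$. Integrating gives $\beta C_2^{-2s/\beta}/((2s+\beta)s)$.

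The only delicate step is the appeal to Corollary~\ref{finmi}: it needs $f$ merely continuous, which is exactly why we first establish \eqref{fd6} and use it to extend $h$ continuously by zero. The step $V(1)=0$ is essential for the right-hand limit in \eqref{kof}, so the double-well argument must come before it.
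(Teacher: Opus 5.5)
Your proposal is correct and follows the paper's proof essentially step by step. You obtain smoothness of $V$ from that of $L_s\phi$ and $\phi^{-1}$, and the limits \eqref{fd6} from Proposition~\ref{thone} via the substitution $r=\phi(x)$. You get the double-well property from Corollary~\ref{finmi} once \eqref{fd6} shows that $V'$ extends continuously to $\pm1$, and then \eqref{kof} by L'H\^opital's rule or integration, using $V(\pm1)=0$. The only differences are minor: you justify $L_s\phi\in C^\infty(\R)$ by differentiating under the integral sign rather than citing Silvestre, and you make explicit the extension of $f$ by zero outside $[-1,1]$ needed to meet the hypothesis $f\in C(\R)$ of Corollary~\ref{finmi}, which the paper leaves implicit.
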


\begin{proof}
We recall that the regularity of~${\phi}$ implies that~$L_s{{\phi}} \in C^{\infty}(\R)$ (see e.g~\cite[Proposition~2.7]{S07}). Moreover, since~${\phi}'>0$, 
by the Inverse Function Theorem
we gather that~${{\phi}}^{-1}\in C^{\infty}((-1,1))$.
As a consequence, by the definition of~$ h$
in~\eqref{i3456789097654bvcxs} we infer that~$h \in C^{\infty}((-1,1))$, and therefore~$V\in C^{\infty}((-1,1))$.

Furthermore, exploiting~\eqref{l} and Proposition~\ref{thone}, we see that
\begin{equation*}
\begin{split}&
\lim_{r \to  -1^+}\frac{ V^{\rq{}}(r)}{(1+r)^{\frac{2s}\alpha }} = \lim_{r \to  -1^+} \displaystyle\frac{L_s{{\phi}} ({{\phi}}^{-1}(r))}{(1+r)^{\frac{2s}\alpha }} =  \lim_{x \to - \infty} \displaystyle\frac{ L_s{{\phi}}(x)}{(1+{{\phi}}(x))^{\frac{2s}\alpha }} \\
&\qquad = \lim_{x \to - \infty}\frac{L_s{{\phi}}(x)}{(C_1|x|^{-\alpha})^{\frac{2s}\alpha }} 
= C_1^{-\frac{2s}{\alpha}} \lim_{x \to - \infty} |x|^{2s}L_s{{\phi}}(x)= \frac{C_1^{-\frac{2s}{\alpha}}}{s}.
\end{split}
\end{equation*}
Similarly,
\begin{equation*}
\begin{split}&
\lim_{r\to 1^-} \displaystyle\frac{ V^{\rq{}}(r)}{(1-r)^{\frac{2s}\beta}} = \lim_{r\to 1^-} \displaystyle\frac{L_s{{\phi}} ({{\phi}}^{-1}(r))}{(1-r)^{\frac{2s}\beta}} =  \lim_{x \to + \infty} \displaystyle\frac{ L_s{{\phi}}(x)}{( 1- {{\phi}}(x))^{\frac{2s}\beta}}\\&\qquad= \lim_{x \to + \infty} \frac{L_s{{\phi}}(x)}{(C_2|x|^{-\beta})^{ \frac{2s}\beta}} 
= C_2^{-\frac{2s}\beta} \lim_{x \to + \infty}|x|^{2s}L_s{{\phi}}(x) = -  \displaystyle\frac{C_2^{-\frac{2s}\beta}}{s}.
\end{split}
\end{equation*}
These computations establish~\eqref{fd6}.

In particular,~\eqref{fd6} implies that
$$V'(1):=\lim_{r\to 1^-}V'(r)=0\qquad{\mbox{and}}\qquad
V'(-1):=\lim_{r\to -1^+}V'(r)=0,$$ 
for any positive values of~$\alpha$ and~$\beta$. This observation, together with the fact that~$V\in C^{\infty}((-1,1))$, entails that~$V' \in C([-1,1])$.

Also,~$\phi$ is smooth, bounded and satisfies
\[ \lim_{x\to\pm\infty}\phi(x) = \pm 1 \quad\mbox{and}\quad \phi'>0.\]
Hence, we can apply Corollary~\ref{finmi} (used here with~$f:=V'$,~$v:=\phi$, and~$L^{\pm}:=\pm1$). In this way,
recalling also that~$V(-1)=0$ by construction (recall~\eqref{pmes}), we obtain that, for all~$r\in(-1,1)$,
$$ V(r)>V(1)=V(-1)=0,$$
which is~\eqref{vbcn845634897qqwsdfcrtg6tg}.

In particular,~\eqref{vbcn845634897qqwsdfcrtg6tg} ensures that
$$ \lim_{r \to  -1^+}V(r)=0=\lim_{r \to 1^-}V(r),$$
which allows us to employ L\rq{}H\^{o}pital\rq{}s Rule and obtain~\eqref{kof} from the following computations:
\begin{equation*}
\lim_{r \to  -1^+}\frac{ V(r)}{(1+r)^{\frac{2s}\alpha+1}}=\lim_{r \to  -1^+}\frac{ \alpha V\rq{}(r)}{(2s+\alpha)(1+r)^{\frac{2s}\alpha}}=\frac{\alpha C_1^{-\frac{2s}{\alpha}}}{(2s+\alpha)s}
\end{equation*}
and
\begin{equation*}
\lim_{r \to 1^-}\frac{ V(r)}{(1-r)^{\frac{2s}\beta+1}}=-\lim_{r \to 1^-}\frac{ \beta V\rq{}(r)}{(2s+\beta)(1-r)^{\frac{2s}\beta}}=\frac{\beta C_2^{-\frac{2s}\beta}}{(2s+\beta)s}.\qedhere
\end{equation*}
\end{proof}

\begin{prop}\label{lemm}
Let~$i \in \N$. Then,
\begin{equation}\label{0ijuhe}
\lim_{r\to -1^+} \frac{V^{(i+1)}(r)}{(1+r)^{\frac{2s}\alpha- i}} \in (-\infty,+\infty) \qquad{\mbox{and}}\qquad \lim_{r\to 1^-} \frac{V^{(i+1)}(r)}{(1-r)^{\frac{2s}\beta- i}} \in (-\infty,+\infty).\end{equation}
\end{prop}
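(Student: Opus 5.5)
We argue near $-1$ only; the case $r\to1^-$ is symmetric, using the limit as $x\to+\infty$ in Corollary~\ref{4tyyhj}. Set $r=\phi(x)$ with $x\to-\infty$, so that $1+r=C_1|x|^{-\alpha}$ for $x<-\kappa$. The idea is to differentiate the identity $V'(\phi(x))=L_s\phi(x)$ repeatedly in $x$. Since $\phi$ is smooth with bounded derivatives, $L_s$ commutes with differentiation, so $\frac{d^k}{dx^k}L_s\phi=L_s\phi^{(k)}$. Corollary~\ref{4tyyhj} then gives
$$L_s\phi^{(k)}(x)=c_k|x|^{-k-2s}(1+o(1)),\qquad c_k=\frac{2\Gamma(k+2s)}{\Gamma(1+2s)}\neq0 .$$
Also, $\phi^{(\ell)}(x)=a_\ell|x|^{-\alpha-\ell}$ exactly for $x<-\kappa$, with $a_\ell=C_1\alpha(\alpha+1)\cdots(\alpha+\ell-1)$.

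We prove by induction on $i$ that $V^{(i+1)}(\phi(x))\,|x|^{-2s+\alpha i}$ has a finite limit as $x\to-\infty$. This is equivalent to \eqref{0ijuhe}, because $(1+r)^{\frac{2s}\alpha-i}=C_1^{\frac{2s}\alpha-i}|x|^{-2s+\alpha i}$. The case $i=0$ is \eqref{fd6}.

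For the inductive step, apply Faà di Bruno to $\frac{d^i}{dx^i}\big[V'(\phi(x))\big]=L_s\phi^{(i)}(x)$:
$$\sum_{k=1}^{i}V^{(k+1)}(\phi(x))\,B_{i,k}\big(\phi'(x),\dots,\phi^{(i-k+1)}(x)\big)=L_s\phi^{(i)}(x).$$
Every monomial in the Bell polynomial $B_{i,k}$ is a product of $k$ derivatives whose orders add up to $i$. Its modulus is therefore a constant times $|x|^{-k\alpha-i}$, and $B_{i,k}=b_{i,k}|x|^{-k\alpha-i}$ with $b_{i,i}=a_1^i\neq0$.

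Now isolate the top-order term $k=i$, multiply by $|x|^{i\alpha+i}/a_1^i$, and then by $|x|^{-2s}$:
$$V^{(i+1)}(\phi(x))|x|^{-2s+\alpha i}=\frac{1}{a_1^i}\Big(|x|^{i+2s}L_s\phi^{(i)}(x)\,|x|^{\alpha i-4s}\cdots\Big).$$
This bookkeeping has to be done carefully. Writing $E_k(x):=V^{(k+1)}(\phi(x))|x|^{-2s+\alpha k}$, the identity becomes
$$a_1^iE_i(x)=|x|^{\alpha i+i}L_s\phi^{(i)}(x)|x|^{-2s}\cdot|x|^{\ldots}-\sum_{k<i}b_{i,k}E_k(x)\,|x|^{\alpha(i-k)}\cdot|x|^{\ldots}.$$
Matching exponents: the term with index $k$ equals $b_{i,k}E_k|x|^{2s-\alpha k-k\alpha-i}$, the right-hand side is of order $|x|^{-i-2s}$, and the leading term is $a_1^iE_i|x|^{2s-2\alpha i-i}$. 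So this naive induction produces the equation
$$a_1^iE_i=|x|^{2\alpha i-2s}\cdot O(|x|^{-2s})\ldots,$$
and the exponents do not balance unless $\alpha i=2s$. This is the main obstacle. Even when each $E_k$ converges, the right-hand side after rescaling is a combination of terms carrying powers $|x|^{\alpha(i-k)}$ that blow up, so finiteness of the limit of $E_i$ does not follow from convergence of the lower $E_k$ alone.

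To get around this, we change variables and work directly in $r$. For $x<-\kappa$ we have $x=-(C_1/(1+r))^{1/\alpha}$, so $V'(r)=g(x(r))$ with $g=L_s\phi$. Differentiating $i$ times in $r$ by Faà di Bruno gives
$$V^{(i+1)}(r)=\sum_{k=1}^{i}g^{(k)}(x(r))\,B_{i,k}\big(x'(r),\dots,x^{(i-k+1)}(r)\big).$$
Here $x^{(\ell)}(r)=d_\ell(1+r)^{-1/\alpha-\ell}$ exactly, and by Corollary~\ref{4tyyhj}, $g^{(k)}(x)=c_k|x|^{-k-2s}(1+o(1))=c_kC_1^{-\frac{k+2s}{\alpha}}(1+r)^{\frac{k+2s}{\alpha}}(1+o(1))$. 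Each product in $B_{i,k}$ is a product of $k$ factors with orders summing to $i$, so it scales like $(1+r)^{-k/\alpha-i}$. Hence every term scales exactly like $(1+r)^{\frac{2s}{\alpha}-i}$. This is the equal-decay phenomenon mentioned in the introduction. Each term divided by $(1+r)^{\frac{2s}\alpha-i}$ therefore has a finite limit, and summing gives \eqref{0ijuhe}, with no induction needed.

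The step to double-check is the interchange $\frac{d^k}{dx^k}L_s\phi=L_s\phi^{(k)}$, which holds because $\phi\in C^\infty$ with bounded derivatives. The limit of the sum may vanish, which is consistent with the statement since it only asserts finiteness. The case $r\to1^-$ is identical, using $x=(C_2/(1-r))^{1/\beta}$ and the $x\to+\infty$ limit in Corollary~\ref{4tyyhj}.
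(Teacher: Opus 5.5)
Your final argument is correct, and it takes a different route from the paper. Write $V'(r)=g(x(r))$ with $x(r)=\phi^{-1}(r)=-\big(C_1/(1+r)\big)^{1/\alpha}$ near $-1$. Then:
\begin{itemize}
\item every $x^{(\ell)}(r)$ is an exact multiple of $(1+r)^{-\frac1\alpha-\ell}$, so $B_{i,k}(x'(r),\dots,x^{(i-k+1)}(r))$ is an exact multiple of $(1+r)^{-\frac k\alpha-i}$;
\item $g^{(k)}=L_s\phi^{(k)}$ together with Corollary~\ref{4tyyhj} gives $g^{(k)}(x(r))\,(1+r)^{-\frac{k+2s}{\alpha}}\to c_kC_1^{-\frac{k+2s}\alpha}$.
\end{itemize}
Hence each of the finitely many terms, divided by $(1+r)^{\frac{2s}\alpha-i}$, converges.

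The paper instead differentiates $V'(\phi(x))=L_s\phi(x)$ in $x$. It isolates the top term $V^{(i+1)}(\phi(x))(\phi'(x))^i$ and controls the remaining Fa\`a di Bruno terms through the induction hypothesis. The inputs are identical: the exact power form of $\phi$ outside $[-\kappa,\kappa]$, the commutation of $L_s$ with derivatives, and Corollary~\ref{4tyyhj} at orders $1,\dots,i$. Your version avoids induction and gives the limit explicitly as a finite sum. The paper's version follows the ``iterative reconstruction'' of $V^{(i+1)}$ from the equation that it advertises in the introduction.

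The detour in the middle of your proposal is wrong and should be removed: the $x$-side induction does not fail. Since $(1+r)^{\frac{2s}\alpha-i}=C_1^{\frac{2s}\alpha-i}|x|^{-2s+\alpha i}$, dividing by it amounts to multiplying by $|x|^{2s-\alpha i}$. So the quantity equivalent to \eqref{0ijuhe} is $E_k(x):=V^{(k+1)}(\phi(x))\,|x|^{2s-\alpha k}$, not $V^{(k+1)}(\phi(x))\,|x|^{-2s+\alpha k}$ as you wrote; your claimed equivalence is false as stated.

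With the correct normalization, $V^{(k+1)}(\phi(x))\,B_{i,k}=b_{i,k}E_k(x)\,|x|^{-i-2s}$ for every $k$. The differentiated equation then reads $\sum_{k=1}^i b_{i,k}E_k(x)=|x|^{i+2s}L_s\phi^{(i)}(x)$, and all exponents balance identically. Since $b_{i,i}=a_1^i\neq0$, convergence of $E_i$ follows from that of $E_1,\dots,E_{i-1}$ and Corollary~\ref{4tyyhj}. The ``obstacle'' you identified is purely an artifact of the sign slip, and this corrected induction is exactly the paper's proof.
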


\begin{proof}
We will focus on the proof of the second limit in~\eqref{0ijuhe}, since the argument for first one is analogous.

For the sake of readability, we denote the~$j^{\rm th}$ derivative of~${\phi}$ (with the implicit convention that~${\phi}^{(0)}={\phi}$) for~$x\geq 2\kappa$ as
\begin{equation*}
{\phi}^{(j)}(x) = C_{j} x^{-\beta -j},
\end{equation*} for some~$C_j\in\R$.

We argue by induction and observe that when~$i=0$ the desired result follows from
Proposition~\ref{nuova}. Hence, we now take~$i\geq 1$  and assume that, for every~$k=0,\ldots,i-1$,
\begin{equation}\label{3d0po}
\lim_{r\to 1^-} \frac{V^{(k+1)}(r)}{(1-r)^{\frac{2s}\beta- k}} \in (-\infty,+\infty).
\end{equation}

By the Fa\`a di Bruno Formula,
\begin{equation}\label{obbesux}
\frac{d^{i}}{dx^{i}}  V'({{\phi}}(x)) = \sum \frac{i!}{m_1!\cdots m_{i}!}  V^{(1 + m_1 + \cdots + m_{i})}({{\phi}}(x)) \prod_{j=1}^{i} \left( \frac{{{\phi}}^{(j)}(x)}{j!}\right)^{m_j},
\end{equation}
where the sum runs over all~$i$-tuples of nonnegative integers~$(m_1, \ldots, m_{i})$ satisfying
\begin{equation}\label{11af}
\sum_{j=1}^{i}  m_j j= i.
\end{equation}

We now check that every such~$i$-tuple with~$m_\ell>0$ for some~$\ell \neq 1$ satisfies
\begin{equation} \label{pi0i}
\lim_{x \to +\infty} V^{(1 +m_1+\cdots+m_i)}({{\phi}}(x)) \prod_{j=1}^{i} \left( \frac{{{\phi}}^{(j)}(x)}{j!}\right)^{m_j} x^{i+2s} \in (-\infty,+\infty).
\end{equation}
Indeed, for such~$i$-tuples we have that
\begin{equation*}
1+\sum_{j=1}^{i}m_j \le  m_\ell +\sum_{j=1}^i m_j
=2m_\ell +\sum_{{j=1,\dots,i}\atop{j\neq \ell}} m_j 
\le \ell m_\ell +\sum_{{j=1,\dots,i}\atop{j\neq \ell}} m_j j=\sum_{j=1}^{i}  m_j j
= i.
\end{equation*}
This implies that~$m_1+\cdots+m_i\le i-1$, and therefore
we can exploit the inductive assumption in~\eqref{3d0po} (with~$k:=m_1+\cdots+m_i$)
and obtain that
\begin{equation}\label{jb3} \lim_{r\to 1^- } \frac{V^{(1 + m_1 + \cdots + m_{i})}(r)}{
(1-r)^{\frac{2s}{\beta}-( m_1+\cdots+m_i) }} %= \lim_{x\to+\infty}\frac{V^{(1 + m_1 + \cdots + m_{i})}({\phi}(x))}{x^{\sum_j m_j\beta -2s}} C_{i}^{\sum_j m_j -\frac{2s}{\beta}} 
\in (-\infty,+\infty). \end{equation}

Moreover the condition in~\eqref{11af} gives that
\begin{equation}\label{vr024325Er1widk-13}\begin{split}&
\prod_{j=1}^{i} \left( \frac{{\phi}^{(j)}(x)}{j!} \right)^{m_j}
= \prod_{j=1}^{i} \left( \frac{C_j x^{-\beta-j}}{j!} \right)^{m_j}
= x^{-( m_1(\beta+1)+\cdots +m_i(\beta+i))} \prod_{j=1}^{i} \left( \frac{C_j}{j!} \right)^{m_j} \\
&\qquad\qquad= x^{-i-\beta(m_1+\cdots+ m_i)}\prod_{j=1}^{i} \left( \frac{C_j}{j!} \right)^{m_j}.
\end{split}\end{equation}
This and~\eqref{jb3} lead to
\begin{align*} &\lim_{x \to +\infty} V^{(1 +m_1+\cdots+m_i)}({{\phi}}(x)) \prod_{j=1}^{i} \left( \frac{{{\phi}}^{(j)}(x)}{j!}\right)^{m_j} x^{i+2s} \\&\qquad = \prod_{j=1}^{i} \left( \frac{C_j }{j!} \right)^{m_j}\lim_{x \to +\infty} \frac{V^{(1 +  m_1+\cdots+m_i)}({\phi}(x))}{x^{\beta(m_1+\cdots+m_i) -2s}} \\
&\qquad= C_2^{\frac1\beta}
\prod_{j=1}^{i} \left( \frac{C_j }{j!} \right)^{m_j}\lim_{r \to 1^-} \frac{V^{(1 +  m_1+\cdots+m_i)}(r)}{(1-r)^{\frac{2s}{\beta}-( m_1+\cdots+m_i) }}
\in (-\infty,+\infty), \end{align*}
which proves~\eqref{pi0i}.

Now, combining~\eqref{pi0i} with~\eqref{obbesux}, we obtain that
\begin{equation}\label{cfgcf4}\begin{split}
\lim_{x \to +\infty}x^{i+2s} \frac{d^{i}}{dx^{i}}  V'({{\phi}}(x))
&= C +\lim_{x \to +\infty} V^{(i+1)}({{\phi}}(x)) ({{\phi}}'(x))^{i}x^{i+2s} \\
&= C +\lim_{x \to +\infty} V^{(i+1)}({{\phi}}(x)) (C_{2}\beta)^{i} x^{-i(\beta+1)} x^{i+2s}\\
&= C +\lim_{x \to +\infty} V^{(i+1)}({{\phi}}(x)) (C_{2}\beta)^{i} x^{2s-i\beta}\\
&= C +\lim_{x \to +\infty} V^{(i+1)}({{\phi}}(x)) (C_{2}\beta)^{i} C_2^{\frac{2s}{\beta}-i} (1-{\phi}(x))^{i -\frac{2s}{\beta}},
\end{split}\end{equation} for some~$C\in\R$.

In addition, Corollary~\ref{4tyyhj} yields
\begin{equation}\label{65rf}
\lim_{x\to +\infty} x^{i+2s}L_s {\phi}^{(i)}(x) \in (-\infty,+\infty).
\end{equation}

Now, differentiating~$i$ times the equation in~\eqref{l}
gives that
\begin{equation*}%\label{987uy}
\frac{d^{i}}{dx^{i}}  V'({{\phi}}(x)) =(L_s\phi)^{(i)}(x)=
L_s{\phi}^{(i)}(x),
\end{equation*}
where in the last step we have used~\cite[Proposition~2.1]{OURREC}.

This and~\eqref{65rf} entail that
\begin{equation*}
\lim_{x\to +\infty} x^{i+2s}\frac{d^{i}}{dx^{i}}  V'({{\phi}}(x)) 
=\lim_{x\to +\infty} x^{i+2s}L_s {\phi}^{(i)}(x) \in (-\infty,+\infty).
\end{equation*}
Using this together with~\eqref{cfgcf4}, we conclude that
$$ \lim_{x \to +\infty} V^{(i+1)}({{\phi}}(x)) (C_{2}\beta)^{i} C_2^{\frac{2s}{\beta}-i} (1-{\phi}(x))^{i -\frac{2s}{\beta}}\in (-\infty,+\infty).$$
Therefore
\begin{equation*}
\lim_{r\to 1^-} \frac{V^{(i+1)}(r)}{(1-r)^{\frac{2s}\beta- i}} \in (-\infty,+\infty),
\end{equation*}
which completes the induction step.
\end{proof}

We are now in the position to provide the proof of Theorem~\ref{minth}.

\begin{proof}[Proof of Theorem~\ref{minth}]
The statements in~\eqref{l4b967} and~\eqref{48397tasvjkdfbgkewguo}
follow from Proposition~\ref{nuova}.

Proposition~\ref{nuova} also provides the limits in~\eqref{ham} and~\eqref{hamm} and Proposition~\ref{lemm}
the claim in~\eqref{0ijuhed}.

Next, we take care of the limits in~\eqref{trep1} and~\eqref{trep2}. 
To this aim, we take~$\bar{i}\in \N$ to be the largest integer such that~$2s \geq \bar{i}\beta$. In this way, also recalling Proposition~\ref{lemm} and~\eqref{hamm}, we have that,
for all~$k \in \{0,\cdots,\bar{i}-1\}$,
\begin{equation*}
 \lim_{r \to 1^-} (1-r)^{\frac{2s}\beta-k}=0 \qquad\mbox{and}\qquad \lim_{r\to 1^-} \frac{V^{(k+1)}(r)}{(1-r)^{\frac{2s}\beta-k}} \in (-\infty,+\infty).
\end{equation*}
As a consequence, for all~$k \in \{0,\cdots,\bar{i}-1\}$,
$$\lim_{r\to 1^-}V^{(k+1)}(r)=0.$$

Accordingly, this allows us to apply L\rq{}H\^{o}pital\rq{}s Rule 
and obtain that, for all~$k \in \{1,\cdots,\bar{i}\}$,
\begin{equation*}
\begin{split}
\lim_{r \to 1^-} \frac{V'(r)}{(1-r)^{\frac{2s}\beta}} =  \lim_{r \to 1^-} \frac{V^{(k+1)}(r)}{ \frac{d^k}{dr^k} \left( 1-r\right)^{\frac{2s}\beta}} &= (-1)^k \left(\prod_{j=0}^{k-1} \left( \frac{2s}\beta -j \right)\right)^{-1}\lim_{r \to 1^-} \frac{V^{(k+1)}(r)}{  (1-r)^{\frac{2s}{\beta}-k}} .
\end{split}
\end{equation*}
This and Proposition~\ref{nuova} entail that, for any~$k \in\{1, \ldots, \bar{i}\}$,
$$
\lim_{r \to 1^-} \frac{V^{(k+1)}(r)}{  (1-r)^{\frac{2s}{\beta}-k}}=(-1)^{k+1}
\frac { C_2^{-\frac{2s}\beta}}{s}  \prod_{j=0}^{k-1}\left(  \frac{2s}\beta -j \right) .$$
This proves~\eqref{trep2}, and~\eqref{trep1} is shown analogously.

Now we prove that if~$2s\notin\beta\mathbb N$, then, for every
$k\in\mathbb N$,
\begin{equation}\label{NE8YGF9}
\lim_{r\to1^-}
\frac{V^{(k+1)}(r)}
{(1-r)^{\frac{2s}{\beta}-k}}
=
(-1)^{k+1}
\frac{C_2^{-\frac{2s}{\beta}}}{s}
\prod_{j=0}^{k-1}
\left(\frac{2s}{\beta}-j\right).
\end{equation}

Indeed, let $\bar{i}\in\mathbb N\setminus\{0\}$ be the unique integer such that
\[
(\bar{i}-1)\beta<2s<\bar{i}\beta.
\]
Reasoning as above, we obtain
\[
\lim_{r\to1^-}
\frac{V^{(\bar{i}+1)}(r)}
{(1-r)^{\frac{2s}{\beta}-\bar{i}}}
=
(-1)^{\bar{i}+1}
\frac{C_2^{-\frac{2s}{\beta}}}{s}
\prod_{j=0}^{\bar{i}-1}
\left(\frac{2s}{\beta}-j\right)
\neq0.
\]
Since $\frac{2s}{\beta}-\bar{i}<0$, we have
\[
\lim_{r\to1^-}
(1-r)^{\frac{2s}{\beta}-\bar{i}}
=+\infty
\qquad\text{and}\qquad
\lim_{r\to1^-}|V^{(\bar{i}+1)}(r)|=+\infty.
\]
Thus, L'Hôpital's Rule can be applied again to the ratio in
\eqref{NE8YGF9} with $k=\bar{i}+1$, thus obtaining
\[
\lim_{r\to1^-}
\frac{V^{(\bar{i}+2)}(r)}
{(1-r)^{\frac{2s}{\beta}-\bar{i}-1}}
=
(-1)^{\bar{i}+2}
\frac{C_2^{-\frac{2s}{\beta}}}{s}
\prod_{j=0}^{\bar{i}}
\left(\frac{2s}{\beta}-j\right)
\neq0
\]
and, in turn, that
\[
\lim_{r\to1^-}
(1-r)^{\frac{2s}{\beta}-\bar{i}-1}
=+\infty
\qquad\text{and}\qquad
\lim_{r\to1^-}|V^{(\bar{i}+2)}(r)|=+\infty.
\]
Repeating the same argument
inductively, we obtain \eqref{NE8YGF9} for every $k\in\mathbb N$. The analogous statement holds as $r\to-1^+$, with the same proof.

We now turn to the proof of~\eqref{REgg}. For this, we write
\[ \frac{2s}\alpha= m + \sigma, \quad\mbox{where}\quad m:=\left\lfloor\frac{2s}\alpha\right\rfloor \quad\mbox{and}\quad\sigma\in [0,1). \]
Then, applying~\eqref{trep1} with $i:=m$, we find that
\[ \lim_{r\to-1^+} \frac{V^{(m+1)} (r)}{(1+r)^\sigma} = \frac{1}{s} C_1^{-\frac{2s}\alpha} \prod_{j=0}^{m-1} \left( \frac{2s}\alpha -j\right)<+\infty.\]
Since $V$ is smooth in $(-1,1)$, this limit ensures that $V^{(m+1)}$ extends continuously and remains bounded on~$[-1,0]$. Consequently, $V^{(m)}$ is Lipschitz continuous on~$[-1,0]$, and therefore $V \in C^{\lfloor \frac{2s}\alpha \rfloor,1}([-1,0])$. An analogous argument yields $V \in C^{\lfloor \frac{2s}\beta \rfloor,1}([0,1])$ and completes the proof of~\eqref{REgg}.
\end{proof}

\section{On the transition layer~$\arctan(x)$}\label{atanx} 

Here, we focus on the specific case~$s = 1/2$ and construct a potential~$V$ 
satisfying the Allen–Cahn equation as in~\eqref{l}, where the transition layer is chosen 
to be of the form of~$\arctan(x)$.

\begin{prop}\label{proof}
Let
\begin{equation}\label{newla}
u(x): = \frac{2}{\pi}\arctan(x) \quad\mbox{\rm for any}\quad x \in \R
\end{equation}
and
\[ V_u(\rho) := \frac1{\pi^2}\left( \cos(\pi\rho)-\cos(\pi) \right) \quad\mbox{\rm for any}\quad \rho \in [-1,1].\]

Then, 
\begin{equation}\label{ferqe}
L_{1/2}u(x) =V_u'(u(x))  \quad\mbox{\rm for any}\quad x \in \R. 
\end{equation}
\end{prop}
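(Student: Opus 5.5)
The plan is to compute $L_{1/2}u$ in closed form, rewrite it as a function of $u$, and compare with
\[
V_u'(\rho)=-\frac1\pi\sin(\pi\rho).
\]
So~\eqref{ferqe} amounts to the identity $L_{1/2}u(x)=-\frac1\pi\sin(\pi u(x))$.

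\emph{Step 1 (closed form of the operator).} I would use the harmonic extension rather than the principal value integral directly. For $s=1/2$ the extension problem in Proposition~\ref{h08} is the Laplace equation in $\R^2_+$. The bounded harmonic extension of $\arctan x$ is
\[
U(x,y)=\arctan\Big(\frac{x}{1+y}\Big),
\]
the angle of $(x,y+1)$ measured from the vertical axis. It is bounded and continuous up to $\partial\R^2_+$, so it is the unique extension. Then
\[
\partial_yU(x,0)=-\frac{x}{1+x^2}.
\]
Up to the normalization constant relating $L_{1/2}$ to the Dirichlet-to-Neumann map, this gives $L_{1/2}u(x)=c\,\frac{2}{\pi}\,\frac{x}{1+x^2}$ with a negative constant $c$.

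As a cross-check I would also evaluate the PV integral directly. Write
\[
\arctan y-\arctan x=\arctan\frac{y-x}{1+xy},
\]
or differentiate the integral in $x$ and compute the resulting rational integral by residues, closing in the upper half plane with poles at $y=i$ and $y=x$.

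\emph{Step 2 (rewriting in terms of $u$).} Set $x=\tan(\pi u/2)$, so that $u(x)=\frac2\pi\arctan x$. Then
\[
\frac{x}{1+x^2}=\sin\frac{\pi u}{2}\cos\frac{\pi u}{2}=\frac12\sin(\pi u).
\]
Hence $L_{1/2}u(x)=\frac{c}{\pi}\sin(\pi u(x))$. Comparing with $V_u'(u)=-\frac1\pi\sin(\pi u)$ then reduces~\eqref{ferqe} to identifying the constant, namely $c=-1$.

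\emph{Step 3 (the constant — main obstacle).} The delicate point is the bookkeeping of $c$. With the kernel exactly as in~\eqref{fralapc}, i.e.\ $\mathrm{PV}\int\frac{u(y)-u(x)}{|x-y|^{2}}\,dy$ with no normalizing factor, the Dirichlet-to-Neumann map gets multiplied by $\pi$. In that case the direct computation gives $L_{1/2}u=-\sin(\pi u)$, not $-\frac1\pi\sin(\pi u)$.

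I would test this against Proposition~\ref{thone}, since $u$ has the form~\eqref{valesem} with $\alpha=\beta=1=2s$. Proposition~\ref{thone} forces $|x|\,L_{1/2}u(x)\to-2$ as $x\to+\infty$. On the other hand, $-\frac1\pi\sin(\pi u)\sim-\frac{2}{\pi x}$, while $-\sin(\pi u)\sim-\frac2x$.

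So the first thing I would try is to pin down the normalization under which~\eqref{ferqe} holds exactly. The stated potential $V_u$ corresponds to $L_{1/2}$ carrying the factor $\frac1\pi$, i.e.\ $L_{1/2}=-(-\Delta)^{1/2}$ in the standard normalization. If one insists on the convention of~\eqref{fralapc}, the identity holds with $V_u$ multiplied by $\pi$. I expect this normalization issue, rather than the integral itself, to be where the proof needs care.
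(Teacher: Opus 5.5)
Your computation is correct and more careful than the paper's. The paper computes nothing: it quotes \cite[Appendix~L]{AV19} for $L_{1/2}u=-\frac1\pi\sin(\pi u)$ and then differentiates $V_u$. You instead derive the formula from the explicit bounded harmonic extension $\arctan\frac{x}{1+y}$ and the identity $\frac{x}{1+x^2}=\frac12\sin(\pi u)$. This makes the argument self-contained and forces you to track the constant.

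Your Step~3 is right, and the discrepancy lies in the paper, not in your proof. The quoted formula $-\frac1\pi\sin(\pi u)$ holds for the normalized operator $-(-\Delta)^{1/2}$, whose kernel carries the factor $\frac1\pi$. By contrast, \eqref{fralapc} has no normalizing constant, as the paper itself stresses in the footnote to \eqref{sd09}.

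You can make the constant bookkeeping rigorous using the paper's own \eqref{ceqh}. For $s=\frac12$ one has $P_{1/2}(x,y)=\frac{y}{\pi(x^2+y^2)}$, so $2sp_s=\frac1\pi$ and
\[
L_{1/2}u(x)=\pi\,\partial_y\Big(\tfrac2\pi\arctan\tfrac{x}{1+y}\Big)\Big|_{y=0}=-\frac{2x}{1+x^2}=-\sin(\pi u(x)).
\]
Hence \eqref{ferqe} holds as stated only if $L_{1/2}$ is replaced by $\frac1\pi L_{1/2}=-(-\Delta)^{1/2}$. With \eqref{fralapc} taken literally, the correct potential is $\pi V_u(\rho)=\frac1\pi\big(\cos(\pi\rho)+1\big)$. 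The rescaling is harmless for Section~\ref{atanx}, whose point is unchanged: a cosine potential, smooth up to $\pm1$, versus $V\in C^{1,1}$ only.

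One small correction concerns your cross-check. Proposition~\ref{thone} is proved only for profiles that coincide exactly with $1-C_2x^{-\beta}$ for $x>\kappa$. Your $u$ does not, since $1-u(x)=\frac{2}{\pi x}+O(x^{-3})$, so that proposition does not literally force the limit. The conclusion $xL_{1/2}u(x)\to-2$ is still true and follows directly from your explicit formula. The leading term comes only from the total jump $2$ of $u$, so the cross-check is consistent but not needed.
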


\begin{proof}
It holds that (see~\cite[Appendix~{L}]{AV19} for a proof of this fact)
\[ L_{1/2}u(x) = - \frac1{\pi}\sin(\pi u(x)) \quad\mbox{\rm for any}\quad x \in \R.\]

Moreover, since
\[  V_u'(\rho) = - \frac{1}{\pi}\sin(\pi \rho),\]
we obtain~\eqref{ferqe}.
\end{proof}

Now, let us consider~${\phi}$ as in~\eqref{valesem} with the specifications~$\alpha=\beta=1$ and~$C_1=C_2=1$, together with the associated potential~$V$ in~\eqref{pmes}, 
in the case~$s=1/2$.

We point out that the transition layer~$u$ defined in~\eqref{newla} and~${\phi}$ 
behave similarly for sufficiently large~$|x|$. Indeed,
\[
\lim_{x\to+\infty} x \left( 1- \frac{2}{\pi}\arctan(x) \right) = \frac{2}{\pi},
\qquad
\lim_{x\to-\infty} x \left( 1+ \frac{2}{\pi}\arctan(x) \right) = - \frac{2}{\pi},
\]
and
\[
u'(x) = \frac{2}{\pi(1+x^2)},
\qquad
{\phi}'(x) =\frac1{ x^{2}}.
\]

Moreover, it holds that 
\[
V_u(\pm1)=V_u'(\pm1)=V(\pm1)=V'(\pm1)=0.
\]

Nevertheless, as noted in Remark~\ref{pollohay}, such similarities between 
the profiles of~$u$ and~${\phi}$ do not carry over to similar potentials. 
In fact, $V_u$ is a cosinus function and all its derivatives are bounded in~$[1,1]$, 
whereas we only have that~$V \in C^{1,1}((-1,1))$.

\section{On Remark~\ref{remmlml}}\label{mlml}

Here we expand on Remark~\ref{remmlml}.  
For the sake of clarity, we shift the analysis at the origin, rather than at~$\pm1$.

\begin{prop}
Let~$\alpha>0$. Then, there exists~$f \in C^{\infty} \in (0,1)$  satisfying

\begin{equation}\label{099p99}
\lim_{x\to 0^+} \frac{f(x)}{x^{\alpha}} = C, 
\end{equation}
for some constant~$C\neq0$, and such that~$f$ is not of class~$C^{0,\beta}(0,\epsilon)$ for any~$\beta\in(0,1]$ and any~$\epsilon>0$.
\end{prop}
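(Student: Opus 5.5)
The plan is to construct $f$ explicitly as a power multiplied by a rapidly oscillating bounded factor that tends to $1$:
\[
f(x):=x^{\alpha}\Bigl(1+x^{\gamma}\sin\bigl(x^{-\mu}\bigr)\Bigr),\qquad x\in(0,1),
\]
with exponents $\gamma>0$ and $\mu>0$ to be fixed. Clearly $f\in C^\infty((0,1))$. Since $|x^\gamma\sin(x^{-\mu})|\le x^\gamma\to0$, we get $f(x)/x^\alpha\to1$, so \eqref{099p99} holds with $C=1$.

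The only real work is choosing $\gamma$ and $\mu$ so that $f$ fails to be $\beta$-H\"older near $0$ for every $\beta\in(0,1]$. It suffices to violate $\beta$-H\"older continuity for \emph{every} $\beta>0$. This is achieved if the oscillation of $f$ over intervals of length $\delta$ is much larger than $\delta^{\beta}$. Note that a polynomial oscillation frequency $x^{-\mu}$ gives local differences of size about $x^{\alpha+\gamma}$ over length about $x^{1+\mu}$. The Hölder ratio is then $x^{\alpha+\gamma-\beta(1+\mu)}$, which blows up only when $\beta>(\alpha+\gamma)/(1+\mu)$. So for $\beta$ small it stays bounded, and a polynomial frequency fails. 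This is the main obstacle, and I would overcome it with a super-polynomial frequency:
\[
f(x):=x^{\alpha}\Bigl(1+x\sin\bigl(e^{1/x}\bigr)\Bigr).
\]

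For the key step, take the points $x_n$ and $y_n$ near $0$ where $e^{1/x_n}=2\pi n+\pi/2$ and $e^{1/y_n}=2\pi n-\pi/2$. Then $\sin(e^{1/x_n})=1$ and $\sin(e^{1/y_n})=-1$. By the mean value theorem applied to $t\mapsto e^{1/t}$, whose derivative has modulus $e^{1/t}/t^{2}$, we get $|x_n-y_n|\approx \pi x_n^2 e^{-1/x_n}$. Meanwhile $x_n\approx y_n$, so
\[
|f(x_n)-f(y_n)|\approx 2x_n^{\alpha+1}.
\]
Hence the H\"older quotient satisfies
\[
\frac{|f(x_n)-f(y_n)|}{|x_n-y_n|^{\beta}}\gtrsim x_n^{\alpha+1-2\beta}\,e^{\beta/x_n}\longrightarrow+\infty
\]
for every $\beta\in(0,1]$. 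Since $x_n,y_n\to0$, these pairs eventually lie in $(0,\epsilon)$ for any $\epsilon>0$. Therefore $f\notin C^{0,\beta}((0,\epsilon))$, which completes the construction.

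Two routine points remain to check. First, the approximation $x^\alpha-y^\alpha=o(x^{\alpha+1})$ for the pairs above holds because $|x_n-y_n|$ is exponentially small. Second, the exponent $1$ in the factor $x\sin(\cdot)$ could be replaced by any positive power; it only serves to keep the limit equal to $1$.
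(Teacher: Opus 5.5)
Your argument is correct, and it takes a genuinely different route from the paper's. It is also stronger in one respect.

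The paper fixes $\beta\in(0,1)$, sets $\gamma:=2|\beta-\alpha|/\beta$, and takes $f(x)=x^{\alpha}\bigl(\sin(x^{-\gamma})/\ln x+1\bigr)$. This uses a \emph{polynomial} frequency $x^{-\gamma}$, with an oscillation amplitude only logarithmically smaller than $x^\alpha$. Comparing a zero $p_n$ with an adjacent extremum $q_n$ of $\sin(x^{-\gamma})$ gives a H\"older quotient $\ge C n^{\beta/2}/\ln n$.

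Because $\gamma$ is tuned to $\beta$, the paper's function defeats the prescribed exponent and all larger ones. But, exactly as your heuristic predicts, for fixed $\gamma>0$ it is $\beta'$-H\"older near $0$ for every $\beta'\le\min\{1,\alpha/(1+\gamma)\}$. So the paper's argument really produces a separate counterexample for each exponent. Its choice of $\gamma$ also degenerates when $\beta=\alpha$.

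Your super-polynomial frequency $e^{1/x}$ makes the spacing $|x_n-y_n|\approx\pi x_n^2e^{-1/x_n}$ exponentially small, while the jump $|f(x_n)-f(y_n)|\approx 2x_n^{\alpha+1}$ stays polynomial. Hence a single $f$ fails to be in $C^{0,\beta}((0,\epsilon))$ for all $\beta\in(0,1]$ and all $\epsilon>0$ at once, which is what the statement literally asserts. The mean value bounds and the estimate $|x_n^\alpha-y_n^\alpha|=o(x_n^{\alpha+1})$ that you flag are indeed routine.

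The paper's version is enough for the message of Remark~\ref{remmlml}, namely that the limit alone guarantees no fixed H\"older exponent. Yours gives the uniform counterexample, with a shorter computation.
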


\begin{proof}
We construct a function~$f$ in which strong oscillations prevent H\"older regularity near the origin. The techincal details are as follows.

Let~$\beta\in (0,1)$ and set
\begin{equation}\label{0g5t}
\gamma := \frac{2|\beta-\alpha|}{\beta}.
\end{equation}
For large~$n\in \N$, define the points
\[
p_n:= \left(\frac{2}{\pi(2+2n)}\right)^{\frac1{\gamma}}
\qquad {\mbox{and}}\qquad
q_n:= \left(\frac{2}{\pi(1+2n)}\right)^{\frac1{\gamma}}.
\]

A Taylor expansion yields, for large~$n$ and for some~$C_1>0$, that
\begin{equation}\label{0ytfj}
\begin{split}&
|q_n^\alpha-p_n^\alpha |=\left| q_n^{\alpha}\left( 1- \left(\frac{p_n}{q_n}\right)^\alpha \right)\right| =\left| q_n^\alpha \left( 1-\left(\frac{1+2n}{2+2n}\right)^{\frac{\alpha}{\gamma}}\right)\right|=\left|
q_n^\alpha \left( 1-\left(1-\frac1{2+2n}\right)^{\frac\alpha\gamma}\right)\right|\\&\qquad=\left|
q_n^\alpha\left(\frac\alpha{\gamma(2+2n)}+O\left(\frac1{n^2}\right)\right)\right|\leq
\frac{C_1 q_n^\alpha}n.
\end{split}
\end{equation}

Moreover,
\begin{equation*}
\sin (p_n^{-\gamma})= \sin \bigg( \frac{\pi(2+2n)}{2}\bigg) =  \sin ( \pi +n\pi) =0
\end{equation*}
and
\begin{equation*}
|\sin (q_n^{-\gamma})|=\left| \sin \left( \frac{\pi(1+2n)}{2}\right) \right|= \left| \sin \left( \frac{\pi}{2}+n\pi\right) \right| =1.
\end{equation*}

In addition, we stress that, for large~$n$ and for some~$C_2>0$,
\begin{equation}\label{s465d2df}
\left| \frac{ \sin(q_n^{-\gamma})}{\ln q_n }\right|
= \frac{\gamma }{\ln\!\left(\tfrac{\pi(1+2n)}{2}\right)} 
\geq  \frac{C_2  }{\ln n}.
\end{equation}

We now define
\[
f(x):=x^{\alpha}\left(\frac{\sin (x^{-\gamma})}{\ln x} +1\right).
\]
Combining~\eqref{0ytfj} and~\eqref{s465d2df}, we obtain that, for large~$n$ and for some~$C>0$,
\begin{equation}\label{08hkjj}
\begin{split}&
|f(q_n)-f(p_n)|
= \left|q_n^{\alpha} \frac{\sin (q_n^{-\gamma})}{\ln q_n} + q_n^{\alpha} - p_n^{\alpha}\right|
\geq \left|q_n^{\alpha} \frac{\sin (q_n^{-\gamma})}{\ln q_n}\right|-| q_n^{\alpha} - p_n^{\alpha}| \\ 
&\qquad\geq q_n^{\alpha} \left( \frac{C_2}{\ln n}-\frac{C_1}{n} \right)
\geq \frac{C q_n^{\alpha}}{\ln n}.
\end{split}
\end{equation}

Furthermore, from~\eqref{0ytfj} with~$\alpha=1$ we deduce that
\begin{equation*}
|q_n-p_n|\leq \frac{C_1 q_n}{n}.
\end{equation*}
Therefore, using this, \eqref{08hkjj}, the definition of~$q_n$, and the choice of~$\gamma$ in~\eqref{0g5t}, we obtain (up to renaming~$C$) that
\begin{equation*}
\frac{|f(q_n)-f(p_n)|}{|q_n-p_n|^{\beta}} 
\geq  \frac{C q_n^{\alpha}n^{\beta}}{q_n^{\beta}\ln n } 
=  \frac{C n^{\frac{\beta-\alpha}{\gamma}+\beta}}{\ln n} 
=  \frac{C n^{\beta\left(1+\frac{\beta-\alpha}{2|\beta-\alpha|}\right)}}{\ln n} 
\geq \frac{C n^{\frac\beta{2}}}{\ln n}.
\end{equation*}
The right-hand side diverges as~$n\to+\infty$. Since~$p_n$, $q_n\to 0$ as~$n\to+\infty$, we conclude that~$f$ fails to belong to~$C^{0,\beta}$ for any~$\beta\in(0,1]$ in~$(0,\epsilon)$, for any~$\epsilon>0$. 

Nevertheless,~$f \in C^{\infty}(0,1)$ and condition~\eqref{099p99} holds, since
\begin{equation*}
\lim_{x\to0^+} \frac{ f(x)}{x^{\alpha}} = \lim_{x\to0^+} \frac{\sin (x^{-\gamma})}{\ln x} +1 = 1.\qedhere
\end{equation*}
\end{proof}

\section*{Conflict of Interest}
On behalf of all authors, the corresponding author states that there is no conflict of interest.
\section*{Data Availability}
Data sharing is not applicable to this article as no new datasets were generated or analyzed during the current study.


\begin{bibdiv}
\begin{biblist}   

\bib{AV19}{article}{
  author={Abatangelo, Nicola},
   author={Valdinoci, Enrico},
   title={Getting acquainted with the fractional Laplacian},
   conference={
      title={Contemporary research in elliptic PDEs and related topics},
   },
   book={
      series={Springer INdAM Ser.},
      volume={33},
      publisher={Springer, Cham},
   },
   date={2019},
   pages={1--105},
   review={\MR{3967804}},
}

\bib{BV16}{book} {
   author={Bucur, Claudia},
   author={Valdinoci, Enrico},
   title={Nonlocal diffusion and applications},
   series={Lecture Notes of the Unione Matematica Italiana},
   volume={20},
   publisher={Springer, [Cham]; Unione Matematica Italiana, Bologna},
   date={2016},
   pages={xii+155},
   isbn={978-3-319-28738-6},
   isbn={978-3-319-28739-3},
   review={\MR{3469920}},
   doi={10.1007/978-3-319-28739-3},
}

\bib{MR2644786}{article}{
   author={Cabr\'{e}, Xavier},
   author={Cinti, Eleonora},
   title={Energy estimates and 1-D symmetry for nonlinear equations
   involving the half-Laplacian},
   journal={Discrete Contin. Dyn. Syst.},
   volume={28},
   date={2010},
   number={3},
   pages={1179--1206},
   issn={1078-0947},
   review={\MR{2644786}},
   doi={10.3934/dcds.2010.28.1179},
}

\bib{MR3148114}{article}{
   author={Cabr\'{e}, Xavier},
   author={Cinti, Eleonora},
   title={Sharp energy estimates for nonlinear fractional diffusion
   equations},
   journal={Calc. Var. Partial Differential Equations},
   volume={49},
   date={2014},
   number={1-2},
   pages={233--269},
   issn={0944-2669},
   review={\MR{3148114}},
   doi={10.1007/s00526-012-0580-6},
}

\bib{MR4938046}{article}{
   author={Cabr\'{e}, Xavier},
   author={Cinti, Eleonora},
   author={Serra, Joaquim},
   title={Stable solutions to the fractional Allen-Cahn equation in the
   nonlocal perimeter regime},
   journal={Amer. J. Math.},
   volume={147},
   date={2025},
   number={4},
   pages={957--1024},
   issn={0002-9327},
   review={\MR{4938046}},
}

\bib{CS14}{article} {
  author={Cabr\'{e}, Xavier},
   author={Sire, Yannick},
   title={Nonlinear equations for fractional Laplacians, I: Regularity,
   maximum principles, and Hamiltonian estimates},
   journal={Ann. Inst. H. Poincar\'{e} C Anal. Non Lin\'{e}aire},
   volume={31},
   date={2014},
   number={1},
   pages={23--53},
   issn={0294-1449},
   review={\MR{3165278}},
   doi={10.1016/j.anihpc.2013.02.001},
}

\bib{MR3280032}{article}{
   author={Cabr\'{e}, Xavier},
   author={Sire, Yannick},
   title={Nonlinear equations for fractional Laplacians II: Existence,
   uniqueness, and qualitative properties of solutions},
   journal={Trans. Amer. Math. Soc.},
   volume={367},
   date={2015},
   number={2},
   pages={911--941},
   issn={0002-9947},
   review={\MR{3280032}},
   doi={10.1090/S0002-9947-2014-05906-0},
}

\bib{MR2177165}{article}{
   author={Cabr\'{e}, Xavier},
   author={Sol\`a-Morales, Joan},
   title={Layer solutions in a half-space for boundary reactions},
   journal={Comm. Pure Appl. Math.},
   volume={58},
   date={2005},
   number={12},
   pages={1678--1732},
   issn={0010-3640},
   review={\MR{2177165}},
   doi={10.1002/cpa.20093},
}



\bib{MR2354493}{article}{
    AUTHOR = {Caffarelli, Luis},
	AUTHOR= {Silvestre, Luis},
     TITLE = {An extension problem related to the fractional {L}aplacian},
   JOURNAL = {Comm. Partial Differential Equations},
  FJOURNAL = {Communications in Partial Differential Equations},
    VOLUME = {32},
      YEAR = {2007},
    NUMBER = {7-9},
     PAGES = {1245--1260},
      ISSN = {0360-5302,1532-4133},
   MRCLASS = {35J70},
  MRNUMBER = {2354493},
MRREVIEWER = {Francesco\ Petitta},
       DOI = {10.1080/03605300600987306},
       URL = {https://doi.org/10.1080/03605300600987306},
}


\bib{MR4612096}{article}{
   author={Conti, Sergio},
   author={Garroni, Adriana},
   author={M\"{u}ller, Stefan},
   title={Derivation of strain-gradient plasticity from a generalized
   Peierls-Nabarro model},
   journal={J. Eur. Math. Soc. (JEMS)},
   volume={25},
   date={2023},
   number={7},
   pages={2487--2524},
   issn={1435-9855},
   review={\MR{4612096}},
   doi={10.4171/jems/1242},
}

\bib{CP16}{article}{
author={Cozzi, Matteo},
   author={Passalacqua, Tommaso},
   title={One-dimensional solutions of non-local Allen-Cahn-type equations
   with rough kernels},
   journal={J. Differential Equations},
   volume={260},
   date={2016},
   number={8},
   pages={6638--6696},
   issn={0022-0396},
   review={\MR{3460227}},
   doi={10.1016/j.jde.2016.01.006},
}

\bib{CozziValdNONLINEARITY}{article}{
  author={Cozzi, Matteo},
   author={Valdinoci, Enrico},
   title={Planelike minimizers of nonlocal Ginzburg-Landau energies and
   fractional perimeters in periodic media},
   journal={Nonlinearity},
   volume={31},
   date={2018},
   number={7},
   pages={3013--3056},
   issn={0951-7715},
   review={\MR{3816747}},
   doi={10.1088/1361-6544/aab89d},
}

\bib{DPDV}{article} {
    AUTHOR = {De Pas, F.},
    AUTHOR = {Dipierro, S.},
    AUTHOR = {Piccinini, M.},
    AUTHOR = {Valdinoci, E.},
     TITLE = {Heteroclinic connections for fractional Allen-Cahn equations with degenerate potentials},
   JOURNAL = {Annali della Scuola Normale Superiore di Pisa - Classe di Scienze, Section: forthcoming articles},
  FJOURNAL = {},
    VOLUME = {},
      YEAR = {},
    NUMBER = {},
     PAGES = {},
      ISSN = {},
   MRCLASS = {},
  MRNUMBER = {},
MRREVIEWER = {},
       DOI = {10.2422/2036-2145.202502.001},
       URL = {https://journals.sns.it/index.php/annaliscienze/article/view/6978/2424},
}

\bib{OURREC}{article} {
    AUTHOR = {De Pas, F.},
    AUTHOR = {Dipierro, S.},
    AUTHOR = {Valdinoci, E.},
     TITLE = {Optimal decay of heteroclinic solutions of the fractional Allen-Cahn equation with a degenerate potential},
   JOURNAL = {preprint},
%%  FJOURNAL = {},
%%    VOLUME = {},
%%      YEAR = {},
%%    NUMBER = {},
%%     PAGES = {},
%%      ISSN = {},
%%   MRCLASS = {},
%%  MRNUMBER = {},
%%MRREVIEWER = {},
}

\bib{MR3740395}{article}{
   author={Dipierro, Serena},
   author={Farina, Alberto},
   author={Valdinoci, Enrico},
   title={A three-dimensional symmetry result for a phase transition
   equation in the genuinely nonlocal regime},
   journal={Calc. Var. Partial Differential Equations},
   volume={57},
   date={2018},
   number={1},
   pages={Paper No. 15, 21},
   issn={0944-2669},
   review={\MR{3740395}},
   doi={10.1007/s00526-017-1295-5},
}

\bib{DFV14}{article}{
author={Dipierro, Serena},
   author={Figalli, Alessio},
   author={Valdinoci, Enrico},
   title={Strongly nonlocal dislocation dynamics in crystals},
   journal={Comm. Partial Differential Equations},
   volume={39},
   date={2014},
   number={12},
   pages={2351--2387},
   issn={0360-5302},
   review={\MR{3259559}},
   doi={10.1080/03605302.2014.914536},
}

\bib{DPV15}{article}{
author={Dipierro, Serena},
   author={Palatucci, Giampiero},
   author={Valdinoci, Enrico},
   title={Dislocation dynamics in crystals: a macroscopic theory in a
   fractional Laplace setting},
   journal={Comm. Math. Phys.},
   volume={333},
   date={2015},
   number={2},
   pages={1061--1105},
   issn={0010-3616},
   review={\MR{3296170}},
   doi={10.1007/s00220-014-2118-6},
}

\bib{MR4531940}{article}{
  author={Dipierro, Serena},
   author={Patrizi, Stefania},
   author={Valdinoci, Enrico},
   title={A fractional glance to the theory of edge dislocations},
   conference={
      title={Geometric and Functional Inequalities and Recent Topics in
      Nonlinear PDEs},
   },
   book={
      series={Contemp. Math.},
      volume={781},
      publisher={Amer. Math. Soc., [Providence], RI},
   },
   date={2023},
   pages={103--135},
   review={\MR{4531940}},
   doi={10.1090/conm/781/15710},
}

\bib{MR4124116}{article}{
   author={Dipierro, Serena},
   author={Serra, Joaquim},
   author={Valdinoci, Enrico},
   title={Improvement of flatness for nonlocal phase transitions},
   journal={Amer. J. Math.},
   volume={142},
   date={2020},
   number={4},
   pages={1083--1160},
   issn={0002-9327},
   review={\MR{4124116}},
   doi={10.1353/ajm.2020.0032},
}

\bib{MR4297378}{article}{
   author={Dipierro, Serena},
   author={Savin, Ovidiu},
   author={Valdinoci, Enrico},
   title={On divergent fractional Laplace equations},
   language={English, with English and French summaries},
   journal={Ann. Fac. Sci. Toulouse Math. (6)},
   volume={30},
   date={2021},
   number={2},
   pages={255--265},
   issn={0240-2963},
   review={\MR{4297378}},
   doi={10.5802/afst.1673},
}

\bib{MR4581189}{article}{
   author={Dipierro, Serena},
   author={Valdinoci, Enrico},
   title={Some perspectives on (non)local phase transitions and minimal
   surfaces},
   journal={Bull. Math. Sci.},
   volume={13},
   date={2023},
   number={1},
   pages={Paper No. 2330001, 77},
   issn={1664-3607},
   review={\MR{4581189}},
   doi={10.1142/S1664360723300013},
}

\bib{MR4050103}{article}{
   author={Figalli, Alessio},
   author={Serra, Joaquim},
   title={On stable solutions for boundary reactions: a De Giorgi-type
   result in dimension $4+1$},
   journal={Invent. Math.},
   volume={219},
   date={2020},
   number={1},
   pages={153--177},
   issn={0020-9910},
   review={\MR{4050103}},
   doi={10.1007/s00222-019-00904-2},
}

\bib{MR2461827}{article}{
   author={Forcadel, Nicolas},
   author={Imbert, Cyril},
   author={Monneau, R\'{e}gis},
   title={Homogenization of some particle systems with two-body interactions
   and of the dislocation dynamics},
   journal={Discrete Contin. Dyn. Syst.},
   volume={23},
   date={2009},
   number={3},
   pages={785--826},
   issn={1078-0947},
   review={\MR{2461827}},
   doi={10.3934/dcds.2009.23.785},
}

\bib{GM12}{article}{
      author={Gonz\'{a}lez, Mar\'{\i}a del Mar},
   author={Monneau, Regis},
   title={Slow motion of particle systems as a limit of a reaction-diffusion
   equation with half-Laplacian in dimension one},
   journal={Discrete Contin. Dyn. Syst.},
   volume={32},
   date={2012},
   number={4},
   pages={1255--1286},
   issn={1078-0947},
   review={\MR{2851899}},
   doi={10.3934/dcds.2012.32.1255},
}

\bib{MR1829143}{article}{
   author={Haraux, A.},
   author={Jendoubi, M. A.},
   title={Decay estimates to equilibrium for some evolution equations with
   an analytic nonlinearity},
   journal={Asymptot. Anal.},
   volume={26},
   date={2001},
   number={1},
   pages={21--36},
   issn={0921-7134},
   review={\MR{1829143}},
   doi={10.3233/asy-2001-437},
}

\bib{MR1609269}{article}{
   author={Jendoubi, Mohamed Ali},
   title={A simple unified approach to some convergence theorems of L.
   Simon},
   journal={J. Funct. Anal.},
   volume={153},
   date={1998},
   number={1},
   pages={187--202},
   issn={0022-1236},
   review={\MR{1609269}},
   doi={10.1006/jfan.1997.3174},
}


\bib{MR371203}{book}{
   author={Lardner, R. W.},
   title={Mathematical theory of dislocations and fracture},
   series={Mathematical Expositions, No. 17},
   publisher={University of Toronto Press, Toronto, ON},
   date={1974},
   pages={xi+363},
   review={\MR{371203}},
}

\bib{MR2946964}{article}{
   author={Monneau, R\'{e}gis},
   author={Patrizi, Stefania},
   title={Homogenization of the Peierls-Nabarro model for dislocation
   dynamics},
   journal={J. Differential Equations},
   volume={253},
   date={2012},
   number={7},
   pages={2064--2105},
   issn={0022-0396},
   review={\MR{2946964}},
   doi={10.1016/j.jde.2012.06.019},
}

\bib{PSV13}{article}{
      author={Palatucci, Giampiero},
   author={Savin, Ovidiu},
   author={Valdinoci, Enrico},
   title={Local and global minimizers for a variational energy involving a
   fractional norm},
   journal={Ann. Mat. Pura Appl. (4)},
   volume={192},
   date={2013},
   number={4},
   pages={673--718},
   issn={0373-3114},
   review={\MR{3081641}},
   doi={10.1007/s10231-011-0243-9},
}

\bib{MR3338445}{article}{
   author={Patrizi, Stefania},
   author={Valdinoci, Enrico},
   title={Crystal dislocations with different orientations and collisions},
   journal={Arch. Ration. Mech. Anal.},
   volume={217},
   date={2015},
   number={1},
   pages={231--261},
   issn={0003-9527},
   review={\MR{3338445}},
   doi={10.1007/s00205-014-0832-z},
}

\bib{MR3511786}{article}{
   author={Patrizi, Stefania},
   author={Valdinoci, Enrico},
   title={Relaxation times for atom dislocations in crystals},
   journal={Calc. Var. Partial Differential Equations},
   volume={55},
   date={2016},
   number={3},
   pages={Art. 71, 44},
   issn={0944-2669},
   review={\MR{3511786}},
   doi={10.1007/s00526-016-1000-0},
}

\bib{MR3703556}{article}{
   author={Patrizi, Stefania},
   author={Valdinoci, Enrico},
   title={Long-time behavior for crystal dislocation dynamics},
   journal={Math. Models Methods Appl. Sci.},
   volume={27},
   date={2017},
   number={12},
   pages={2185--2228},
   issn={0218-2025},
   review={\MR{3703556}},
   doi={10.1142/S0218202517500427},
}


\bib{MR1680877}{article}{
   author={Rybka, Piotr},
   author={Hoffmann, Karl-Heinz},
   title={Convergence of solutions to Cahn-Hilliard equation},
   journal={Comm. Partial Differential Equations},
   volume={24},
   date={1999},
   number={5-6},
   pages={1055--1077},
   issn={0360-5302},
   review={\MR{1680877}},
   doi={10.1080/03605309908821458},
}


\bib{MR3812860}{article}{
   author={Savin, Ovidiu},
   title={Rigidity of minimizers in nonlocal phase transitions},
   journal={Anal. PDE},
   volume={11},
   date={2018},
   number={8},
   pages={1881--1900},
   issn={2157-5045},
   review={\MR{3812860}},
   doi={10.2140/apde.2018.11.1881},
}

\bib{MR3939768}{article}{
   author={Savin, O.},
   title={Rigidity of minimizers in nonlocal phase transitions II},
   journal={Anal. Theory Appl.},
   volume={35},
   date={2019},
   number={1},
   pages={1--27},
   issn={1672-4070},
   review={\MR{3939768}},
   doi={10.4208/ata.oa-0008},
}

      \bib{SV12}{article}{
 author={Savin, Ovidiu},
   author={Valdinoci, Enrico},
   title={$\Gamma$-convergence for nonlocal phase transitions},
   journal={Ann. Inst. H. Poincar\'{e} C Anal. Non Lin\'{e}aire},
   volume={29},
   date={2012},
   number={4},
   pages={479--500},
   issn={0294-1449},
   review={\MR{2948285}},
   doi={10.1016/j.anihpc.2012.01.006},
}

\bib{MR3035063}{article}{
   author={Savin, Ovidiu},
   author={Valdinoci, Enrico},
   title={Some monotonicity results for minimizers in the calculus of
   variations},
   journal={J. Funct. Anal.},
   volume={264},
   date={2013},
   number={10},
   pages={2469--2496},
   issn={0022-1236},
   review={\MR{3035063}},
   doi={10.1016/j.jfa.2013.02.005},
}

\bib{SV14}{article}{
   author={Savin, Ovidiu},
   author={Valdinoci, Enrico},
   title={Density estimates for a variational model driven by the Gagliardo
   norm},
   language={English, with English and French summaries},
   journal={J. Math. Pures Appl. (9)},
   volume={101},
   date={2014},
   number={1},
   pages={1--26},
   issn={0021-7824},
   review={\MR{3133422}},
   doi={10.1016/j.matpur.2013.05.001},
}

\bib{S07}{article} {
       author={Silvestre, Luis},
   title={Regularity of the obstacle problem for a fractional power of the
   Laplace operator},
   journal={Comm. Pure Appl. Math.},
   volume={60},
   date={2007},
   number={1},
   pages={67--112},
   issn={0010-3640},
   review={\MR{2270163}},
   doi={10.1002/cpa.20153},
}

\bib{MR2498561}{article}{
   author={Sire, Yannick},
   author={Valdinoci, Enrico},
   title={Fractional Laplacian phase transitions and boundary reactions: a
   geometric inequality and a symmetry result},
   journal={J. Funct. Anal.},
   volume={256},
   date={2009},
   number={6},
   pages={1842--1864},
   issn={0022-1236},
   review={\MR{2498561}},
   doi={10.1016/j.jfa.2009.01.020},
}

\bib{MR1442163}{article}{
   author={Toland, J. F.},
   title={The Peierls-Nabarro and Benjamin-Ono equations},
   journal={J. Funct. Anal.},
   volume={145},
   date={1997},
   number={1},
   pages={136--150},
   issn={0022-1236},
   review={\MR{1442163}},
   doi={10.1006/jfan.1996.3016},
}

\end{biblist}
\end{bibdiv}
\end{document}